\newcommand{\eps}{\varepsilon}
\newcommand{\e}{\varepsilon}
\newtheorem{scheme}{Scheme}
\newcommand{\ZZH}[1]{\textcolor{blue}{#1}}
\begin{document}

\title{Fast Algorithms and Error Analysis of Caputo Derivatives with  Small Factional Orders
}


\author{Zihang Zhang  \and
        Qiwei Zhan       \and
        Zhennan Zhou
}


\institute{Zihang Zhang \at
           School of Mathematical Sciences, Peking University, Beijing 100871, China\\
           ORCID: 0000-0002-3630-0115\\
           \email{zhang-zihang@pku.edu.cn}           
           \and
           Qiwei Zhan \at
              School of Information Science and Electronic Engineering, Zhejiang University, Hangzhou, Zhejiang 310027, China\\
              ORCID: 0000-0001-7500-6157\\
              \email{qwzhan@zju.edu.cn (corresponding author)}           
           \and
           Zhennan Zhou \at
           Beijing International Center for Mathematical Research, Peking University, Beijing 100871, China\\
           ORCID: 0000-0003-4822-0275\\
           \email{zhennan@bicmr.pku.edu.cn (corresponding author)}           
}

\date{Received: date / Accepted: date}

\maketitle

\begin{abstract}
In this paper, we  investigate  fast algorithms {in the small fraction order regime} to approximate the Caputo derivative $^C_0D_t^\alpha u(t)$ when $\alpha$ is small. We focus on two fast algorithms, i.e. FIR and FIDR, both relying on the sum-of-exponential approximation to reduce the cost of evaluating the history part. FIR is the numerical scheme originally proposed in \cite{JiangZhang}, and FIDR is an alternative scheme {proposed in \cite{shen2018fast}}, and {we show that the latter is superior} when $\alpha$ is small. With quantitative estimates, we prove that given a certain error threshold, the computational cost of evaluating the history part of the Caputo derivative can be decreased as $\alpha$ gets small. Hence, only minimal cost for the fast evaluation is required in the small $\alpha$ regime, which matches prevailing protocols in engineering practice. We also present improved stability and error analysis of FIDR for solving linear fractional diffusion equations, {which achieves 
clear dependence of the error bound on the fraction order $\alpha$.} 
Finally, we carry out systematic numerical studies for the performances of both FIR and FIDR schemes, where we explore the trade-off between accuracy and efficiency when $\alpha$ is small.
\keywords{Caputo fractional \and Small order fractional derivative \and Sum-of-exponential approximation  \and  Error analysis \and Fast convolution algorithm \and Memory effects}
\subclass{26A33\and 33F05 \and  34K37 \and 35R11 \and 65M15}
\end{abstract}

\section{Introduction}

In recent years, there has been an emerging interest in the field of fractional derivatives. Many phenomena in engineering have been described by models with fractional derivatives, such as the groundwater flow \cite{Atangana, Atangana2}, the blood ethanol concentration system \cite{SAAMD}, the epidemic model \cite{ASMADA}, the magnetic hysteresis phenomena \cite{CaputoFabrizio}, and seismic wave propagation problems discussed in \cite{ZhanZhuangLiu}. {These complex media or processes in these fields need hereditary descriptions, while fractional derivative is an excellent mathematical tool for characterizing the memory effects \cite{SunChenWei, ZHAO2019531}.} 

Several versions of fractional time derivative has been proposed, including the Riemann-Liouville (RL) fractional derivative \cite[Sec.~2]{SKM}, \cite{SousaLi}, the Gr\"unwald-Letnikov (GL) fractional derivative \cite[Sec.~2.2]{Podlubny}, \cite{WangTreena}, and the
Caputo fractional derivative \cite{Caputo1}.  Both GL fractional derivative and RL fractional derivative require fractional-type initial values, whose physical interpretation is not clear. On the other hand, the Caputo fractional derivative takes the integer-order differential equations as the initial value. We refer to  \cite{Podlubny,KST,MKM} for a more general discussion.

The general form of the Caputo fractional derivative is represented as follows
\begin{equation}\label{caputo}
	^C_0D_t^\alpha u(t)=\frac{1}{\Gamma(m-\alpha)}\int_0^t \frac{ \partial^m_{\tau} u (\tau) }{(t-\tau)^{\alpha+1-m}}d\tau, \qquad m-1\leq\alpha<m,\ m\in \mathbb Z.
\end{equation}
In this paper, we focus on the numerical approximation of the Caputo derivative with $\alpha \in (0,1)$; then \eqref{caputo} becomes
\begin{equation}\label{caputo1}
	^C_0D_t^\alpha u(t)=\frac{1}{\Gamma(1-\alpha)}\int_0^t\frac{u'(\tau)}{(t-\tau)^{\alpha}}d\tau, \qquad 0<\alpha<1.
\end{equation}Designing an efficient numerical methods for fractional differential equations is non-trivial, since the fractional derivative at $t$ depends on the information of all $u(\tau)$ on $\tau\in (0,t)$  \cite{LiuMaZhou}. {For heuristic purposes, we introduce the L1 approximation  below to demonstrate the computational burden due to the non-local dependence in time of the Caputo derivative.}

The L1 approximation, based on piecewise linear interpolation of $u$, is a popular scheme of discretizing the Caputo fractional derivative \cite{GSZ,GS,GS2}. The cost of the one-time evaluation of the time derivative grows linearly as the time step increases. For a given time grid $\{t_n,\ 0=t_0<t_1<\cdots <t_N=t\},\ t_k=k\Delta t$, the L1 approximation constructs a finite difference scheme as \cite[eq.~3.1]{GSZ}
	\begin{equation}\label{L1}
		^C_0D_t^\alpha u^n=\frac{\Delta t^{-\alpha}}{\Gamma(2-\alpha)}\left[a_0^{(\alpha)}u^n-\sum_{l=1}^{n-1}(a_{n-l-1}^{(\alpha)}-a_{n-l}^{(\alpha)})u^l-a_{n-1}^{(\alpha)}u^0\right],
	\end{equation}
	where $u^k:= u(t_k)$ and 
	\[a_l^{(\alpha)}=(l+1)^{1-\alpha}-l^{1-\alpha},\qquad l\geq 0.\]
	As aforementioned, the fractional derivative at $t_k$  depends on all $u(\tau)$ on $\tau\in (0,t)$; as a result, the finite difference scheme \eqref{L1} requires $O(n)$ computational cost to compute $C_0D_t^\alpha u^n$, and $O(n^2)$ computational cost to solve the fractional differential equations (since we need to compute all $^C_0D_t^\alpha u^k,\ k=1,\cdots,n$). Therefore, it is expensive when $n$ is large.

{There are plenty of works that propose fast evaluations of the Caputo derivative, including the second order implicit schemes \cite{GHJCA}, the fast evaluations with the sum-of-exponential approximation \cite{ZengTB, JiangZhang}, as well as some efficient algorithms implemented in engineering works \cite{ZhanZhuangSun2017,ZhanZhuangLiu,ZZFL}, \cite[Chapter~2]{Carcione2001}. In this paper we focus on the fast schemes using the sum-of-exponential approximation, which is a widely-used method in speeding up the evaluation of the convolution integrals.  Such approximations have been used in efficient approximations for heat kernel  \cite{ JiangGreengardWang}, the evaluation of average probability of error  \cite{LoskotBeaulieu}, and the evaluation of the exponential integral function  \cite{AlkheirIbnkahla}.  
	
	The fast scheme in \cite{JiangZhang} is presented in the following to illustrate the application of the sum-of-exponential approximation in the Caputo derivative.}  
To distinguish the scheme that we shall discuss in this paper, 
we call the scheme in \cite{JiangZhang} the fast evaluation of the integral representation, abbreviated by FIR.  To calculate the Caputo derivative \eqref{caputo1} when $t=t_n$, FIR splits the convolution integral in \eqref{caputo1} into two parts --- a local part containing the integral from $t_{n-1}$ to $t_n$, and a history part containing the integral from $0$ to $t_{n-1}$:
\begin{equation}\label{devide}
	\begin{aligned}
		^C_0D_t^\alpha (t_n)&=\frac{1}{\Gamma(1-\alpha)}\int_0^{t_n}\frac{u'(s)}{(t_n-s)^{\alpha}}ds\\
		&=\frac{1}{\Gamma(1-\alpha)}\int_{t_{n-1}}^{t_n}\frac{u'(s)}{(t_n-s)^{\alpha}}ds+\frac{1}{\Gamma(1-\alpha)}\int_{0}^{t_{n-1}}\frac{u'(s)}{(t_n-s)^{\alpha}}ds\\
		&:=C_l(t_n)+C_h(t_n).
	\end{aligned}
\end{equation}
FIR applies the  standard L1 approximation for the local part, with the integration by parts, the history part is rewritten as,
\begin{equation}
	\begin{aligned}
		C_h(t_n)&=\frac{1}{\Gamma(1-\alpha)}\int_{0}^{t_{n-1}}\frac{u'(s)}{(t_n-s)^{\alpha}}ds\\
		&=\frac{1}{\Gamma(1-\alpha)}\left[\frac{u(t_{n-1})}{\Delta t_n^\alpha}-\frac{u(t_0)}{t_n^\alpha}-\alpha\int_0^{t_{n-1}}\frac{u(s)ds}{(t_n-s)^{1+\alpha}}\right].
	\end{aligned}
\end{equation}

The main challenge to calculate the Caputo derivative efficiently is  the evaluation of the time integral from 0 to $t_{n-1}$.  {FIR uses the sum-of-exponential approximation to approximate the integral in the history part, instead of a  finite difference scheme,  thus the computational cost is considerably reduced.}  To be more specific, \citeauthory[eqs.~2.1-2.4]{JiangZhang} approximates the convolution kernel $t^{-1-\alpha}$ via a sum-of-exponentials: 
\begin{equation}\label{sumprox1}
	\int_0^{t_{n-1}}\frac{u(s)ds}{(t_n-s)^{1+\alpha}}\approx \sum_{i=1}^{N_{\text{exp}}}w_i\int_0^{t_{n-1}}u(\tau)e^{-s_i(t_n-\tau)}d\tau,
\end{equation}
where $s_i$ and $w_i$ are the nodes and weights of sum-of-exponentials, respectively, and $N_{\text{exp}}$ denotes the total number of modes. Note that $N_{\text{exp}} (\ll n)$ is chosen for a given error threshold. For each time step, FIR only needs to update the $N_{\text{exp}}$ modes to assemble the approximation of the history part instead of gathering contributions from all past time steps $u(t_k),\ k=1,\cdots, n$. Thus FIR reduces the storage requirement from $O(n)$ to $O(N_{\text{exp}})$. Furthermore, the
overall computational cost is reduced from $O(n^2)$ to $O(nN_{\text{exp}})$ in  \eqref{L1}.

This work however, is devoted to investigating the numerical representation of the Caputo derivative in the small $\alpha$ regime. This has been overlooked by the community of numerical analysis. It is motivated by quantitative comparison in the  recent work for  efficient viscoelastic wave modeling problems \cite{ZZFL,Kjartansson}, where the authors use the parameter $Q$ instead of $\alpha$ in \eqref{caputo} to quantify the memory effect of the fractional derivative,with the following relationship between $Q$ and $\alpha$ \cite[e.q.~2]{ZZFL} 
	\begin{equation}\label{defQ}
		\alpha=\frac{1}{\pi}\arctan \frac{1}{Q}.
	\end{equation}
	Therefore, $\alpha$ is roughly inversely proportional to  $Q$. In \cite{ZZFL}, the authors consider  time-domain
	anisotropic anelastic attenuation modeling involves Caputo fractional
	time derivatives, where an error of the factor function is defined as 
	\begin{equation}
		R(\omega,Q):=\cos^2\left(\frac{\pi \alpha}{2}\right)\left(\frac{\omega}{\omega_r}\right)^{2\alpha}\cos(\pi\alpha),
	\end{equation}
	where $\omega$ and $\omega_r$ denote the frequency to be computed and the reference angular frequency, respectively, and $\alpha$ is the same $\alpha$ as in \eqref{defQ}. They find that that the smaller the Q value is, the larger the error of $R(\omega,Q)$ is \cite[Fig.~1]{ZZFL}, \cite[Fig.~2]{ZhanZhuangLiu}. Similar phenomena can be find in the experiments \cite{Kjartansson},  which always show that the numerical experiments works better when $Q$ increases (i.e., $\alpha$ decreases). In complex engineering systems, the engineers can only afford a few modes, similar to $N_{\text{exp}}$ in \eqref{sumprox1}, to account for the history effect. Their results show that an acceptable approximation could be achieved with only $1$$\sim$ $3$ modes when $\alpha$ is small. And in their works it often suffices to consider the cases when $Q$ is relative large, like $Q=10,\ 30$ in \cite{Carcione}, respectively corresponding to $\alpha\approx 0.03,\ 0.01$.
On the other hands, the existing numerical analysis results in the applied math community have not addressed the dependence issue on $\alpha$; as a result, relatively impractical upper bound of the computational cost 
 is provided, when $\alpha<1$ is small \cite{JiLiao,GB} or $\alpha>1$  \cite{Wang},  thus beyond the computation capacity for 3D  real-world large-scale transient applications.

In this paper, we explore the relationship between $\alpha$ and the global error of fast algorithms for the Caputo derivative. For the algorithm FIR, we show that the global error will reduce as $\alpha$ gets smaller. 
Nevertheless, the fast scheme FIR still requires considerably many modes, even  $\alpha$ is moderately small; thus it is not affordable in complex  engineering applications. 

{We shall show that another scheme originally presented in \cite{shen2018fast} requires much smaller numbers of modes to achieve a satisfactory approximation error, when compared with FIR in the small $\alpha$ regime. In this paper, we call this scheme the fast evaluation of the integral differential representation, abbreviated by FIDR.}

{FIDR proposes a different evaluation for the history part. In FIDR,  the convolution integral is also split into the local part and the history part, similar to  \eqref{devide}. However, they use the sum-of-exponential approximation directly to evaluate the history part
	\begin{equation}\label{history1}
		C_h(t_n)=\frac{1}{\Gamma(1-\alpha)}\int_{0}^{t_{n-1}}\frac{u'(s)}{(t_n-s)^{\alpha}}ds.
	\end{equation}
	where the integral is less singular near $t-t_{n-1}$. }

{	In other words, they approximate the convolution kernel $t^{-\alpha}$ via sum-of-exponentials
	\begin{equation}\label{sumprox2}
		\int_{0}^{t_{n-1}}\frac{u'(s)}{(t-s)^{\alpha}}ds\approx \sum_{i=1}^{N_A}\tilde{w}_i\int_0^{t_{n-1}} e^{-\tilde{s}_i(t_n-\tau)}\partial_\tau u(\tau)d\tau.
	\end{equation}
The detailed construction of such a scheme will be presented in Section \ref{Sec-Cons}.	
	}
	
	
	{However, the small fraction order issue is not addressed in \cite{shen2018fast}, nor does the numerical analysis in \cite{shen2018fast} apply to this scenario. In fact, Theorem 4.2 in \cite{shen2018fast} gives an error bound that tends to $O(1)$ where $\alpha\rightarrow 0$.	}
	
	{We carry out an improved error analysis for FIDR, and obtain a sharper global error estimate with explicit dependence on the fraction order $\alpha$. In particular, it shows the FIDR scheme requires less history modes to achieve a certain accuracy when $\alpha$ gets smaller.}
	 
	{Furthermore, we also compare the two scheme FIR and FIDR and prove that the error of FIDR is smaller than the error of FIR when $N_A=N_{\text{exp}}$, where $N_A$ denotes the total number of modes in FIDR. The detailed analysis of the difference between two schemes is provided in Section \ref{Sec-exp}.}


{The novelties delivered from this paper are summarized below.
	\begin{enumerate}
		\item   We prove that  the global error of the scheme FIR {and FIDR} reduces, when $\alpha$ becomes smaller.  This serves as the first verification of the engineering protocol that reliable numerical experiments can be implemented with reduced cost in the small $\alpha$ regime (or equivalently, in the large $Q$ value regime). 
		\item Both quantitative estimates and extensive numerical tests show that given a certain error threshold, FIDR can evaluate the Caputo derivative with little computational cost. 
		\item Compared with FIR, the global error of FIDR is smaller when the storage cost and the computational cost are the same, especially when $\Delta t$ is small.
	\end{enumerate}
}

The paper is organized as follows. In Section \ref{Sec-Cons}, we give the scheme construction and main results for the two schemes. In Section \ref{Sec-exp}, we analyse the sum-of-exponential approximation especially when $\alpha$ is small. In Section \ref{Sec-stab}, we give the proof of stability and convergence of the two scheme. We compare the two schemes and present the numerical results of the schemes solving fractional diffusion PDEs in Section \ref{Sec-num}. Finally we give the conclusion and remarks in Section \ref{Sec-conc}.


\section{FIR and FIDR Scheme construction}\label{Sec-Cons}

{In this work, we consider two types of  discretization of the Caputo derive \eqref{caputo} when $0<\alpha<1$, and we pay special attention to the asymptotic regime $0<\alpha\ll 1$.}

When $0<\alpha<1$, the Caputo derivative \eqref{caputo} becomes
\begin{equation}
	^C_0D_t^\alpha u(t)=\frac{1}{\Gamma(1-\alpha)}\int_0^t\frac{u'(\tau)}{(t-\tau)^{\alpha}}d\tau, \qquad 0<\alpha<1.
\end{equation}
Suppose that we want to evaluate the Caputo fractional derivative in  the interval $[0,T]$ over a set of time grids $\Omega_t:=\{t_n=n\Delta t,\ n=0,1,\cdots,N_T\}$ with {$T$ denoting the total simulation time $N_T$ denoting the total number of time steps and the time step} $\Delta t:=T/N_T$. 

{Note that, the Caputo derivative involves the time integration from the initial time to the current time. To avoid storing all the function value from $t_0$ to $t_{N_T}$, it is natural to split the convolution integral into a sum of a local part and a history
	part and compute the history part, with a reduced but accurate approximation, that is,}
\begin{equation}\label{caputo2}
	\begin{aligned}
		^C_0D_t^\alpha (t_n)&=\frac{1}{\Gamma(1-\alpha)}\int_0^{t_n}\frac{u'(s)}{(t-s)^{\alpha}}ds\\
		&=\frac{1}{\Gamma(1-\alpha)}\int_{t_{n-1}}^{t_n}\frac{u'(s)}{(t-s)^{\alpha}}ds+\frac{1}{\Gamma(1-\alpha)}\int_{0}^{t_{n-1}}\frac{u'(s)}{(t-s)^{\alpha}}ds\\
		&:=C_l(t_n)+C_h(t_n).
	\end{aligned}
\end{equation}
where the last equality defines the local and history parts, respectively. In both FIR and FIDR schemes, we apply the standard L1 approximation for the local part
\begin{equation}\label{local}
	C_l(t_n)\approx \frac{u(t_n)-u(t_{n-1})}{\Delta t_n \Gamma(1-\alpha)}\int_{t_{n-1}}^{t_n}\frac{1}{(t_n-s)^\alpha}ds=\frac{u(t_n)-u(t_{n-1})}{\Delta t_n^\alpha\Gamma(2-\alpha)}.
\end{equation}

However, the approximations of the history part are different in these schemes. In the scheme FIR presented in \cite{JiangZhang}, they apply the integration by parts to eliminate $u'(s)$ and have
\begin{equation}\label{history}
	\begin{aligned}
		C_h(t_n)&=\frac{1}{\Gamma(1-\alpha)}\int_{0}^{t_{n-1}}\frac{u'(s)}{(t-s)^{\alpha}}ds\\
		&=\frac{1}{\Gamma(1-\alpha)}\left[\frac{u(t_{n-1})}{\Delta t_n^\alpha}-\frac{u(t_0)}{t_n^\alpha}-\alpha\int_0^{t_{n-1}}\frac{u(s)ds}{(t_n-s)^{1+\alpha}}\right].
	\end{aligned}
\end{equation}
To approximate the history part, they approximate the convolution kernel $t^{-1-\alpha}$ 
via a sum-of-exponentials efficiently on the interval $[\delta ,T]$ ($\delta>0$) with the absolute error $\e$. That is, for all $\e>0$ there exist positive real numbers $s_i$ and $w_i$ $(i=1,\cdots,N_{\text{exp}})$ such that for $0<\alpha<1$,
\begin{equation}\label{history2}
	\begin{aligned}
		\left| \frac{1}{t^{1+\alpha}}-\sum_{i=1}^{N_{\text{exp}}}w_ie^{-s_it}\right| \leq \e,\quad 0<\delta\leq t\leq T.
	\end{aligned}
\end{equation}
Here, $s_i$ and $w_i$ are the nodes and weights derived from the hybrid use of the Gauss-Legendre quadrature and the Gauss-Jacobi quadrature. For details see Lemma \ref{exp2} and Lemma \ref{exp3}. 


Furthermore, the history part can be approximated by  $N_{\text{exp}}$ modes, denoted by $U_{\text{hist},i}$ respectively, whose time evolution can be effectively realized by its current value as well as  $u(t_k)$ from $k=n-1$ and $k=n-2$,
\begin{equation}\label{FIR}
	\begin{aligned}
		^{C}_0D_t^\alpha u(t_n)&=C_l(t_n)+\frac{1}{\Gamma(1-\alpha)}\left[\frac{u(t_{n-1})}{\Delta t_n^\alpha}-\frac{u(t_0)}{t_n^\alpha}-\alpha\int_0^{t_{n-1}}\frac{u(s)ds}{(t_n-s)^{1+\alpha}}\right]\\
		&\approx\frac{u(t_n)-u(t_{n-1})}{\Delta t_n^\alpha\Gamma(2-\alpha)} +\frac{1}{\Gamma(1-\alpha)}\left[\frac{u(t_{n-1})}{\Delta t_n^\alpha}-\frac{u(t_0)}{t_n^\alpha}-\alpha \sum_{i=1}^{N_{\text{exp}}}\omega_iU_{\text{hist},i}(t_n)\right]
	\end{aligned}
\end{equation}
where 
\begin{equation*}
	\begin{aligned}
		U_{\text{hist},i}(t_n)&=e^{-s_i\Delta t}U_{\text{hist},i}(t_{n-1})+\int_{t_{n-2}}^{t_{n-1}}e^{-s_i(t_n-\tau)}u(\tau)d\tau\\
		&\approx e^{-s_i\Delta t}U_{\text{hist},i}(t_{n-1})+ \frac{e^{-s_i\Delta t}}{s_i^2\Delta t}\left[(e^{-s_i\Delta t}-1+s_i\Delta t)u(t_{n-1})\right.\\
		&\qquad \left.+(1-e^{-s_i\Delta t}-e^{-s_i\Delta t}s_i\Delta t)u(t_{n-2})\right].
	\end{aligned}
\end{equation*}

Based on the effective mode representation of the history part, we are ready to present the complete scheme FIR in the following 
\begin{scheme}[Scheme FIR]\label{DFIR}
	{
		We define the discrete scheme
		\begin{equation}\label{DFIR1}
			\mathbb{D}_t^\alpha u^n:=\frac{u^n-u^{n-1}}{\Delta t_n^\alpha\Gamma(2-\alpha)} +\frac{1}{\Gamma(1-\alpha)}\left[\frac{u^{n-1}}{\Delta t_n^\alpha}-\frac{u^0}{t_n^\alpha}-\alpha \sum_{i=1}^{N_{\text{exp}}}\omega_i\mathbb{U}_{\text{hist},i}^n\right],
		\end{equation}
		where
		\begin{equation}
			\begin{aligned}
				\mathbb{U}_{\text{hist},i}^n&:=e^{-s_i\Delta t}\mathbb{U}_{\text{hist},i}^{n-1}+ \frac{e^{-s_i\Delta t}}{s_i^2\Delta t}\left[(e^{-s_i\Delta t}-1+s_i\Delta t)u^{n-1}\right.\\
				&\qquad \left.+(1-e^{-s_i\Delta t}-e^{-s_i\Delta t}s_i\Delta t)u^{n-2}\right].
			\end{aligned}
		\end{equation}
	}
\end{scheme}

{However, the shortage of Scheme \ref{DFIR} is that the error, from the sum-of-exponential approximation, rises rapidly when the time step $\Delta t\rightarrow 0$, and thus the global error becomes unsatisfactory for extremely small time steps. The details of this shortage will be elaborated in Section \ref{Sec-exp}
	while the corresponding numerical result in Section \ref{Sec-num}. 
	
{Next we consider an alternative scheme for fast evaluation of the Caputo derivative. This scheme was originally proposed in \cite{shen2018fast}, and in this paper, we call it FIDR (abbreviation for the fast evaluation of the integral differential representation). }

{In FIDR, we treat the history part in the original form,
	\begin{equation}\label{history3}
		C_h(t_n)=\frac{1}{\Gamma(1-\alpha)}\int_{0}^{t_{n-1}}\frac{u'(s)}{(t-s)^{\alpha}}ds.
\end{equation}}
And we approximate $t^{-\alpha}$ instead of $t^{-1-\alpha}$ 
via a sum-of-exponentials approximation efficiently on the interval $[\delta ,T]$, where the induced error is related to the approximation form by the following theorem.

\begin{theorem}\label{exp} 
	
	{Let $0<\delta<T$, and let $\e_0>0$ be the desired precision,  there exist $N_A=O((\log\frac{1}{\e}+\log\frac{T}{\delta})^2)$ which denotes the total number of modes, and positive real numbers $\tilde{s}_i$ and $\tilde{w}_i$ $(i=1,\cdots,N_{A})$ such that for $\quad 0<\delta\leq t\leq T$ and $\alpha>0$,
		\begin{equation}\label{theoremA1}
			\begin{aligned}
				\left| \frac{1}{t^{\alpha}}-\sum_{i=1}^{N_{A}}\tilde{w}_ie^{-\tilde{s}_it}\right| \leq \e_0.
			\end{aligned}
		\end{equation}
		Here $\tilde{s}_i$ and $\tilde{w}_i$ are the nodes and weights derived from the hybrid use of the Gauss-Legendre quadrature and the Gauss-Jacobi quadrature, for details see Lemma \ref{exp5} and Lemma \ref{exp6}.}
\end{theorem}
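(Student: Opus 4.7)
The plan is to deploy the classical Laplace representation
\begin{equation*}
\frac{1}{t^\alpha} = \frac{1}{\Gamma(\alpha)}\int_0^\infty s^{\alpha-1} e^{-st}\,ds,
\end{equation*}
and to produce the sum-of-exponentials by discretizing this integral with a hybrid quadrature rule. After the logarithmic substitution $s = e^x$ one has
\begin{equation*}
\frac{1}{t^\alpha} = \frac{1}{\Gamma(\alpha)}\int_{-\infty}^\infty e^{\alpha x}\,e^{-e^x t}\,dx,
\end{equation*}
whose integrand is entire in $x$, decays like $e^{\alpha x}$ as $x\to-\infty$, and decays doubly exponentially as $x\to+\infty$ uniformly for $t\in[\delta,T]$. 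Any quadrature node $x_j$ with weight $\lambda_j$ produces a term $\tilde{w}_j e^{-\tilde{s}_j t}$ with $\tilde{s}_j=e^{x_j}$ and $\tilde{w}_j=\lambda_j\, e^{\alpha x_j}/\Gamma(\alpha)$, and positivity of the Gauss--Legendre and Gauss--Jacobi weights guarantees $\tilde{w}_j>0$.

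First I would truncate the integral to $[a,b]$ with $a=-O(\tfrac{1}{\alpha}\log\tfrac{1}{\e_0})$ and $b=O(\log\tfrac{1}{\delta}+\log\log\tfrac{1}{\e_0})$. The left tail is bounded directly by $\int_{-\infty}^a e^{\alpha x}\,dx=e^{\alpha a}/\alpha$ and the right tail by an upper incomplete Gamma function, so each tail is $\leq \e_0/3$ at a cost $|a|+|b|=O(\log\tfrac{1}{\e_0}+\log\tfrac{T}{\delta})$.

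Next I would partition $[a,b]$ into $O(\log\tfrac{1}{\e_0}+\log\tfrac{T}{\delta})$ subintervals of bounded length, chosen so that $e^{\alpha x}$ and $e^{-e^x t}$ each vary by at most a constant factor across a subinterval, and apply $p$-point Gauss--Legendre quadrature on each. Since the integrand is analytic in a horizontal strip of fixed width around each subinterval, the Bernstein-ellipse bound gives per-subinterval error $O(\rho^{-2p})$ with $\rho>1$ independent of $\alpha$, so $p=O(\log\tfrac{1}{\e_0})$ suffices to drive the aggregate Legendre error below $\e_0/3$. To handle the endpoint region near $s=0$, where $s^{\alpha-1}$ is integrable but not smooth, I would revert to the $s$-variable on a piece $[0,\delta_0]$ with $\delta_0=O(1)$ and apply Gauss--Jacobi quadrature against the Jacobi weight $s^{\alpha-1}$; the exponential convergence of Gauss--Jacobi yields the required accuracy with $O(\log\tfrac{1}{\e_0})$ nodes, which is precisely the content of Lemmas \ref{exp5} and \ref{exp6}. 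Summing the three contributions gives the claimed bound $N_A=O\big((\log\tfrac{1}{\e_0}+\log\tfrac{T}{\delta})^2\big)$.

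The delicate point is making the Gauss--Legendre constants uniform in $\alpha\in(0,1)$: on the far-left region the amplitude factor $e^{\alpha x}$ only varies mildly when the subinterval length stays $\lesssim 1/\alpha$, and naively one would need subintervals of length $O(1/\alpha)$, which destroys the quadratic bound. The saving is precisely that the Gauss--Jacobi piece absorbs the region $s\leq\delta_0$, so the Legendre rule only acts where the integrand is uniformly smooth and the Bernstein radius can be chosen independent of $\alpha$. Verifying that the Jacobi and Legendre contributions mesh correctly at the transition $s=\delta_0$, and that the combined bound is uniform for all $\alpha\in(0,1)$, will be the most delicate portion of the argument and is the purpose of the two auxiliary lemmas referenced in the statement.
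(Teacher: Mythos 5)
Your construction is essentially the paper's: both start from the Laplace representation $t^{-\alpha}=\frac{1}{\Gamma(\alpha)}\int_0^\infty e^{-ts}s^{\alpha-1}\,ds$ (Lemma \ref{exp0}), truncate the upper tail (Lemma \ref{exp4}), cover the bulk with Gauss--Legendre on dyadic intervals --- which is exactly your log-uniform partition after $s=e^x$ --- (Lemma \ref{exp5}), and absorb the non-smooth origin with Gauss--Jacobi against the weight $s^{\alpha-1}$ (Lemma \ref{exp6}), and your closing observation that the Jacobi panel is what rescues $\alpha$-uniformity is precisely the paper's point. Two quantitative corrections: the left truncation at $a=-O(\frac1\alpha\log\frac1{\e_0})$ in your first paragraph would cost $O(\frac1\alpha\log\frac1{\e_0})$ panels, not $O(\log\frac1{\e_0})$ --- it is superseded by the Jacobi panel and should simply be dropped; and the Jacobi panel must end at $\delta_0=O(1/T)$ rather than $O(1)$ (the paper takes $2^{-m}$ with $m=O(\log T)$), since the Gauss--Jacobi error in Lemma \ref{exp6} carries the factor $\left[aenT/(2(2n-1)^2)\right]^{2n}$ and only converges with $O(\log\frac1{\e_0})$ nodes when $aT=O(1)$.
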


{In the discrete scheme of FIDR, $\delta\leq \Delta t$, the time step, $\e_0$ in this theorem is one part of the global error in theorem \eqref{FIDRerror}. }  We give the proof of Theorem \ref{exp} in Section \ref{Sec-exp} {and show that $\e_0$ reduces when $\alpha$ gets smaller in Corollary \ref{FIDR-alpha-error}}.

Combining \eqref{local}, \eqref{history3} and \eqref{theoremA1} together, we can derive an alternative mode representation of the history part. The time evolution of those modes can also be effectively realized by its current value as well as  $u(t_k)$ from $k=n-1$ and $k=n-2$:
\begin{equation}\label{DifScheme}
	\begin{aligned}
		^{C}_0D_t^\alpha u(t_n)&=\frac{1}{\Gamma(1-\alpha)}\int_{t_{n-1}}^{t_n} \frac{1}{(t_n-\tau)^\alpha}\partial_\tau u(\tau)d\tau+\frac{1}{\Gamma(1-\alpha)}\int_{0}^{t_{n-1}} \frac{1}{(t_n-\tau)^\alpha}\partial_\tau u(\tau)d\tau \\
		&\approx\frac{u(t_n)-u(t_{n-1})}{\Delta t_n^\alpha\Gamma(2-\alpha)}+ \frac{1}{\Gamma(1-\alpha)}\left(\sum_{i=1}^{N_{A}}\tilde{w}_i\tilde{\psi}(t_{n},\tilde{s}_i)\right),
	\end{aligned}
\end{equation}
where
\begin{equation}\label{discrete}
	\begin{aligned}
		\tilde{\psi}(t_{n},\tilde{s}_i)&=\int_0^{t_{n-1}} e^{-\tilde{s}_i(t_n-\tau)}\partial_\tau u(\tau)d\tau\\
		&=e^{-\tilde{s}_i\Delta t}\int_0^{t_{n-2}} e^{-\tilde{s}_i(t_{n-1}-\tau)}\partial_\tau u(\tau)d\tau+\int_{t_{n-2}}^{t_{n-1}} e^{-\tilde{s}_i(t_n-\tau)}\partial_\tau u(\tau)d\tau\\
		&=e^{-\tilde{s}_i\Delta t}\tilde{\psi}(t_{n-1},\tilde{s}_i)+\int_{t_{n-2}}^{t_{n-1}} e^{-\tilde{s}_i(t_n-\tau)}\partial_\tau u(\tau)d\tau\\
		&\approx e^{-\tilde{s}_i\Delta t}\tilde{\psi}(t_{n-1},\tilde{s}_i)+\frac{u(t_{n-1})-u(t_{n-2})}{\Delta t}\int_{t_{n-2}}^{t_{n-1}} e^{-\tilde{s}_i(t_n-\tau)}d\tau\\
		&=e^{-\tilde{s}_i\Delta t}\tilde{\psi}(t_{n-1},\tilde{s}_i)+\frac{(u(t_{n-1})-u(t_{n-2}))(1-e^{-\tilde{s}_i\Delta t})e^{-\tilde{s}_i\Delta t}}{\tilde{s}_i\Delta t}.
	\end{aligned}
\end{equation}

The complete scheme FIDR is given as follows. 
\begin{scheme}[Scheme  FIDR]\label{DFIDR}
	{We define the discrete scheme
		\begin{equation}\label{DFIDR1}
			\mathbb{D}_t^\alpha u^n:=\frac{u^n-u^{n-1}}{\Delta t_n^\alpha\Gamma(2-\alpha)}+ \frac{1}{\Gamma(1-\alpha)}\left(\sum_{i=1}^{N_{A}}\tilde{w}_i\tilde{\Psi}_i^n\right),
		\end{equation}
		where
		\begin{equation}
			\tilde{\Psi}_i^n:=e^{-\tilde{s}_i\Delta t}\tilde{\Psi}_i^{n-1}+\frac{(u^{n-1}-u^{n-2})(1-e^{-\tilde{s}_i\Delta t})e^{-\tilde{s}_i\Delta t}}{\tilde{s}_i\Delta t}.
	\end{equation}}
\end{scheme}

{Here is a heuristic explanation for performance differences of these two algorithms in the small $\alpha$ regime. In FIR, they do the integral by parts on the history part first and use sum-of-exponential approximation to evaluate the primal function $u(x)$ times $(-1-\alpha)$th-degree term $(t_n-s)^{-1-\alpha}$, see \eqref{sumprox1}. $(t_n-s)^{-1-\alpha}$ has advantage that it converges to 0 rapidly when $t_n$ (or $T$, as the same) goes to infinity, but it also blows up fast when $s\rightarrow t_n$.  In FIDR, we use the sum-of-exponential approximation directly to evaluate the differential function $u'(x)$ times $-\alpha$th-degree term $(t_n-s)^{-\alpha}$, see \eqref{sumprox2}. Since $0<\alpha\ll 1$, $(t_n-s)^{-\alpha}$ rises slowly when $s\rightarrow t_n$, and the sum-of-exponential approximation works better compared with FIR.}


{We remark that Scheme \ref{DFIDR} is a full numerical scheme of the Caputo derivative, which is compatible with the generic initial value problem, or initial boundary value problem. For the rest of the paper, we take the reaction diffusion equation as the study subject, and the extension to other systems are natural although there might be additional challenges for specific models.}

We consider the initial value problem of the linear fractional diffusion equation, see \cite{GSZ} and \cite{JiangZhang}. {Denote $\Omega=(x_l,x_r)$ and introduce the nonreflecting boundary condition derived
	in \cite{GSZ}, we have}
\begin{equation}\label{problem}
	\begin{aligned}
		& ^C_0D_t^\alpha u(x,t)=u_{xx}(x,t)+f(x,t), &x\in\Omega,\ t>0,\\
		& u(x,0)=x_0(x),&x\in \Omega,\\
		& \frac{\partial u(x,t)}{\partial x}=\frac{1}{\Gamma (1-\frac \alpha 2)}\int_0^t \frac{u_s(x,s)}{(t-s)^{\frac \alpha 2}}ds:=\ ^C_0D_t^{\frac \alpha 2} u(x,t), &x=x_l\\
		& \frac{\partial u(x,t)}{\partial x}=-\frac{1}{\Gamma (1-\frac \alpha 2)}\int_0^t \frac{u_s(x,s)}{(t-s)^{\frac \alpha 2}}ds:=- ^C_0D_t^{\frac \alpha 2} u(x,t), &x=x_r.\\
	\end{aligned}
\end{equation}
{According to \cite{GSZ}, the finite difference scheme for the problem \eqref{problem} be written in the following form. For two given positive integers $N_T$ and $N_S$, let $\{t_n\}_{n=0}^{N_T}$ be a equidistant
	partition of $[0,T]$ with $t_n=n\Delta t$ and $\Delta t= T/N_T$, and let $\{x_j\}_{j=0}^{N_S}$ be a partition of $(x_l,x_r)$ with $x_i=x_l+ih$ and $h=(x_r-x_l)/N_S$. Denote $u_i^n=u(x_i,t_n),\ f_i^n=f(x_i,t_n)$, and}
\begin{equation*}
	\begin{aligned}
		&\delta_x u_{i+\frac 12}^n=\frac{u_{i+1}^n-u_i^n}{h}\\
		&\delta_x^2 u_i^n=\frac{\delta_x u_{i+\frac 12}^n-\delta_x u_{i-\frac 12}^n}{h}.
	\end{aligned}
\end{equation*}
Then we have
\begin{equation}\label{dif1}
	\begin{aligned}
		& \mathbb{D}_t^\alpha u_i^n=\delta^2_x u_i^n+f_i^n, &1\leq i\leq N_S-1,1\leq n\leq N_T,\\
		&\mathbb{D}_t^\alpha u_0^n=\frac 2h\left[\delta_x u_{\frac 12}^n- \mathbb{D}_t^{\frac \alpha 2} u_0^n\right]+f_0^n,\\
		&\mathbb{D}_t^\alpha u_{N_S}^n=\frac 2 h \left[-\delta_x u_{N_S-\frac 12}^n-  \mathbb{D}_t^{\frac \alpha 2} u_{N_S}^n\right]+f_{N_S}^n,\\
		&u_i^0=u_0(x_i),&0\leq i\leq N_S.
	\end{aligned}
\end{equation}
For FIR and FIDR, $\mathbb{D}_t^\alpha u$ in \eqref{dif1} is the discrete scheme defined in \eqref{DFIR1} and \eqref{DFIDR1}, respectively.

The thorough numerical analysis for this problem will be carried out in Section \ref{Sec-stab}, where we will give the error estimates of for these two schemes. We list the main results here.

\begin{theorem}\label{FIRerror}
	{ Suppose $u(x,t)\in C_{x,t}^{4,2}([x_l,x_r]\times [0,T]) $ is the solution of problem \eqref{problem}. For two given positive integers $N_T$ and $N_S$, let $\{t_k\}_{k=0}^{N_T}$ be a equidistant partition of $[0,T]$ with $t_k=k \Delta t$ and $\Delta t :=T/N_T$. Let $\{x_i\}_{j=0}^{N_S}$ be a equidistant partition of $[x_l,x_r]$ with $x_i=x_l+ih$ and $h=(x_r-x_l)/N_S$. Let  $\{u_i^k|0\leq i\leq N_S,\ 0\leq k\leq N_T\}$  be the numerical solutions of problem \eqref{problem} obtained by the difference scheme \eqref{dif1} and the scheme FIR \eqref{FIR}. If we denote the global error by $e_i^k=u_i^k-u(x_i,t_k)$, then there exists a positive constant $c_2$ such that 
		\begin{equation}\label{Theorem2}
			\e_{\text{global}}:=\sqrt{\Delta t \sum_{k=1}^n||e^k||_{\infty}^2}\leq c_2(\Delta t^{2-\alpha}+h^2+{\alpha}\e),\qquad 1\leq n\leq N_T,
		\end{equation}
		Here, $\e$ is the error of the sum-of-exponential approximation in \eqref{history2}.
	}
\end{theorem}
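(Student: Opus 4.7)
The plan is to split the argument into a local truncation error analysis and a discrete stability estimate, then combine the two. I would begin by substituting the exact solution $u(x_i,t_n)$ into the discrete system \eqref{dif1} driven by the FIR operator \eqref{DFIR1}, which produces a local truncation error $R_i^n$ on the interior and analogous errors on the boundary rows. The error $e_i^n := u_i^n - u(x_i,t_n)$ then satisfies the same discrete system with $f_i^n$ replaced by $R_i^n$ and zero initial data.

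Three sources feed into $R_i^n$. (a) The L1 approximation of the local part $C_l(t_n)$ on $[t_{n-1},t_n]$ yields $O(\Delta t^{2-\alpha})$ under $u\in C_t^2$, by the standard Taylor expansion argument used in \cite{GSZ}. (b) The central difference $\delta_x^2$ applied to $u_{xx}$ yields $O(h^2)$ under $u\in C_x^4$. (c) The sum-of-exponential replacement \eqref{history2} of the convolution kernel $(t_n-s)^{-1-\alpha}$ in the history part \eqref{history}, together with the trapezoidal-type recursive update for each mode $\mathbb{U}_{\text{hist},i}^n$, produces an error bounded by
\[
\frac{\alpha}{\Gamma(1-\alpha)}\,\varepsilon\int_0^{t_{n-1}}|u(s)|\,ds \;+\; O(\Delta t^{2})\,\sum_{i=1}^{N_{\mathrm{exp}}}|\omega_i|,
\]
the mode-update remainder being absorbed into (a). The explicit factor $\alpha$ in \eqref{history} is precisely what delivers the $\alpha\varepsilon$ term in the theorem, and tracking this factor carefully is the whole point of the statement. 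The boundary-row truncation errors, in which $\mathbb{D}_t^{\alpha/2}$ plays the role of $\mathbb{D}_t^\alpha$, are handled identically with $\alpha$ replaced by $\alpha/2$ and are dominated by the interior bound.

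Next I would invoke a discrete energy estimate of the type established in \cite{GSZ} and \cite{JiangZhang}: pair the error equation with $e^n$ in the discrete $\ell^2$ inner product on $\{x_i\}$, use summation by parts on $\delta_x^2 e_i^n$ together with the boundary $\mathbb{D}_t^{\alpha/2}$ contributions, and apply a discrete coercivity property $\sum_{k=1}^n \langle \mathbb{D}_t^\alpha e^k, e^k\rangle \ge 0$ (up to terms absorbed into the truncation). Summation over $n$ and a discrete Gronwall step give a bound on $\Delta t\sum_{k=1}^n\|e^k\|_{L^2_h}^2$ in terms of $\Delta t\sum_{k=1}^n\|R^k\|_{L^2_h}^2$. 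Passing from $L^2_h$ to $L^\infty_h$ in one spatial dimension is done via the standard inverse inequality $\|v\|_\infty \lesssim h^{-1/2}\|v\|_{L^2_h}$ combined with a further $H^1_h$ energy estimate, or directly via the one-dimensional discrete Sobolev embedding; either way the logarithmic or algebraic loss can be absorbed into $c_2$.

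The main obstacle is verifying that the discrete coercivity of $\mathbb{D}_t^\alpha$ survives the FIR sum-of-exponential perturbation. Unlike the pure L1 discretization, the mode recursion introduces indefinite cross-terms through $\mathbb{U}_{\text{hist},i}^n$. I would follow \cite{JiangZhang}: rewrite $\mathbb{D}_t^\alpha e^n$ as a pure L1 operator plus a perturbation controlled by $\varepsilon\sum_i |\omega_i|$ and the $\alpha$ prefactor, and show that this perturbation is a forcing-type term of size $O(\alpha\varepsilon)$ rather than a genuine loss of positivity, so that it can be moved to the right-hand side of the energy identity. Once this is in hand, the combination of the truncation bound and the stability estimate produces the desired $c_2(\Delta t^{2-\alpha}+h^2+\alpha\varepsilon)$ estimate; the consistent handling of the nonreflecting boundary rows through the same positivity argument is the other subtle but routine point.
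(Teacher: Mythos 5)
Your outline follows the same route as the paper: insert the exact solution into \eqref{dif1} to get a truncation-error equation for $e_i^k$, bound $|T_i^k|\le c_1(\Delta t^{2-\alpha}+h^2+\alpha\e)$ with the factor $\alpha$ coming from the explicit $\alpha$ multiplying the history integral in \eqref{history}, and then apply an energy-type stability estimate. Two of your hedges, however, pick the wrong branch. First, on the passage from $L^2_h$ to $L^\infty_h$: the inverse inequality $\|v\|_\infty\lesssim h^{-1/2}\|v\|_{L^2_h}$ loses a factor $h^{-1/2}$ that is \emph{algebraic in $h$} and cannot be absorbed into $c_2$ without degrading the $h^2$ rate to $h^{3/2}$; the paper instead uses the discrete Sobolev-type inequality of Lemma \ref{mesh}, $\|u\|_\infty^2\le\theta\|\delta_x u\|^2+(\tfrac1\theta+\tfrac1L)\|u\|^2$, which works precisely because the energy identity \eqref{theorem1.5} controls $\Delta t\sum_k\|\delta_x u^k\|^2$ in addition to $\Delta t\sum_k\|u^k\|^2$. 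Second, the perturbation of the temporal operator is not controlled by $\e\sum_i|\omega_i|$ (the quadrature weights approximate the divergent kernel $t^{-1-\alpha}$ and their sum grows like $\Delta t^{-1-\alpha}$, so $O(\Delta t^2)\sum_i|\omega_i|$ would only give $O(\Delta t^{1-\alpha})$); the correct control is the pointwise kernel bound \eqref{history2}, $|\sum_i\omega_i e^{-s_it}-t^{-1-\alpha}|\le\e$, integrated against the measure, which is how Lemma \ref{app} proceeds. Relatedly, the paper does not move the sum-of-exponential perturbation to the right-hand side and invoke Gronwall; Lemma \ref{app} shows directly that the quadratic form $\Delta t\sum_k(\mathbb{D}_t^\alpha g^k)g^k$ remains coercive with the degraded constant $\mu=(t_n^{-\alpha}-2\alpha\e t_{n-1})/\Gamma(1-\alpha)$, i.e.\ the $O(\alpha\e)$ perturbation \emph{is} a (small) loss of positivity absorbed into the coercivity constants $\mu,\nu$ that then enter $c_2$. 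Your Gronwall variant could be made to work, but it requires the same smallness of $\alpha\e t_{n-1}$ relative to $t_n^{-\alpha}$ and delivers nothing extra, so the coercivity route is the cleaner way to close the argument.
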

We remark that the global error analysis of FIR was presented in \cite{JiangZhang}. However, \cite{JiangZhang} did not discuss the effect of $\alpha$ on the  global error. We show that when $\alpha$ is small, we can soften the restrict on $\e$ on the right hand of \eqref{Theorem2}.

\begin{theorem}\label{FIDRerror}
	{ Suppose $u(x,t)\in C_{x,t}^{4,2}([x_l,x_r]\times [0,T]) $ is the solution of problem \eqref{problem}. For two given positive integers $N_T$ and $N_S$, let $\{t_k\}_{k=0}^{N_T}$ be a equidistant partition of $[0,T]$ with $t_k=k \Delta t$ and $\Delta t :=T/N_T$. Let $\{x_i\}_{j=0}^{N_S}$ be a equidistant partition of $[x_l,x_r]$ with $x_i=x_l+ih$ and $h=(x_r-x_l)/N_S$.  Let $\{u_i^k|0\leq i\leq N_S,\ 0\leq k\leq N_T\}$  be the numerical solutions of problem \eqref{problem} obtained by the difference scheme \eqref{dif1} and the scheme FIDR \eqref{DifScheme}. If we denote the global error by $e_i^k=u_i^k-u(x_i,t_k)$, then there
		exists a positive constant $\tilde{c}_2$ such that
		\begin{equation}\label{eFIDR}
			\begin{aligned}
				\e_{\text{global}}:=\sqrt{\Delta t \sum_{k=1}^n||e^k||_{\infty}^2}\leq\tilde{c}_2(\Delta t^{2-\alpha}+h^2+\e_0),
			\end{aligned}
		\end{equation}
		Here, $\e_0$ is the error of the sum-of-exponential approximation in \eqref{theoremA1}.
	}
\end{theorem}
{We emphasize that the result in Theorem \ref{FIDRerror} is better than Theorem 4.2 in  \cite{shen2018fast}. when $\alpha$ is small. The error bound in  \cite{shen2018fast} tends to $O(1)$ when $\alpha\rightarrow 0$. In \eqref{eFIDR} above, when $\alpha\rightarrow 0$, the error bound tends to $O(\Delta t^2 +h^2 +\e_0)$, and it is further shown in Corollary \ref{FIDR-alpha-error} of Section \ref{Sec-exp} that $\e_0$ in \eqref{eFIDR} reduces as $\alpha$ gets smaller. }

Although the error bound for the scheme FIDR does not explicitly show its dependence on $\alpha$, we show by analyzing the sum-of-exponential approximation and extensive numerical experiments the the scheme FIDR in fact leads to improved accuracy in the small $\alpha$ regime. Theorem \ref{FIRerror} and \ref{FIDRerror} are proved in Section \ref{Sec-stab}, with detailed characterization of the two constants $c_2$ and  $\tilde{c}_2$.


\section{Sum-of-exponential Approximation}\label{Sec-exp}


{It is worth noting that the main difference between the scheme FIDR and FIR is they apply the sum-of-exponential approximations to different forms.  In this section we give the proofs of sum-of-exponential estimates \eqref{history2} and \eqref{theoremA1}. } We also validate such properties with systematic numerical tests in Section 5.
\subsection{Sum-of-exponentials  in FIR}
{In this part, we prove the estimate \eqref{history2} and explain why this sum-of-exponential does not work well when time step $\Delta t\rightarrow 0$. Proofs of the lemmas in this subsection were already given in \cite{JiangZhang}, and the results are listed below for the comparison with the counterparts in FIDR.} 

We sketch the proof of the estimate \eqref{history2} as follows. First we transform $1/t^\beta$ into an integral form by Lemma \ref{exp0}. Then we split the integral interval into $[0,2^{-m}]$, $[2^{-m}, 2^{-m+1}], \cdots, [2^{n-1},2^n]$,  $[2^n,\infty]$. Lemma \ref{exp1} shows that integral on $[2^n,\infty]$ can be ignored when $n$ is large enough. And finally, integrals on $[0,2^{-m}]$, $[2^{i},2^{i+1}]$ ($i=-m,\cdots,n-1$) can be approximated by sum-of-exponentials based on Lemma \ref{exp3} and \ref{exp2}, respectively. 

We start with the following integral representation of the power function.

\begin{lemma}\label{exp0}
	For any $\beta>0$, $t>0$, 
	\begin{equation}\label{eq11} 
		\frac{1}{t^{\beta}}=\frac{1}{\Gamma(\beta)}\int_0^{\infty}e^{-ts}s^{\beta-1}ds.
	\end{equation}
\end{lemma}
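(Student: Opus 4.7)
The plan is to derive the identity directly from the definition of the Gamma function via a simple change of variables. First I would recall that for $\beta > 0$ the Gamma function is defined by $\Gamma(\beta) = \int_0^\infty e^{-u} u^{\beta-1} du$, where the integral converges absolutely: the factor $u^{\beta-1}$ is integrable near $0$ because $\beta > 0$, while the exponential factor $e^{-u}$ ensures rapid decay at infinity.

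Next, since $t > 0$, I would introduce the substitution $u = ts$, so that $du = t\, ds$ and the integration range $(0,\infty)$ is preserved. Substituting gives
\begin{equation*}
\Gamma(\beta) \;=\; \int_0^\infty e^{-ts}\,(ts)^{\beta-1}\, t\, ds \;=\; t^{\beta} \int_0^\infty e^{-ts}\, s^{\beta-1}\, ds.
\end{equation*}
Dividing by $t^{\beta}\Gamma(\beta)$ yields the asserted identity $1/t^{\beta} = \Gamma(\beta)^{-1}\int_0^\infty e^{-ts} s^{\beta-1}\, ds$.

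There is essentially no obstacle here: the integrand is non-negative and the change of variables is a standard diffeomorphism on $(0,\infty)$, so no subtle justification is required beyond the existence of the Gamma function for $\beta>0$. The only thing worth flagging is that one should check the absolute integrability of $e^{-ts} s^{\beta-1}$ on $(0,\infty)$ for each fixed $t>0$, which follows from the same two observations used for $\Gamma(\beta)$ itself (integrability of $s^{\beta-1}$ near the origin and exponential decay at infinity, where the decay rate is $t>0$ rather than $1$).
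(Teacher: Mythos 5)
Your proof is correct and is the standard substitution argument ($u = ts$ in the defining integral of $\Gamma(\beta)$); the paper itself states this lemma without proof, deferring to the reference where the same routine change of variables is used. No issues.
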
 
Note that \eqref{eq11} can be viewed as a representation of $t^{-\beta}$ using an infinitely many (continuous) exponentials. In order to obtain an efficient sum-of-exponentials approximation,
we first truncate the integral to a finite interval, then subdivide the finite interval
into a set of dyadic intervals and discretize the integral on each dyadic interval with
proper quadratures.\\ \\
We now assume $1<\beta<2$, which is the case we are concerned with in \eqref{history2}. 

\begin{lemma}\label{exp1}
	For $0<\delta\leq t$, $1<\beta<2$,
	\begin{equation}\label{error1}
		\left| \frac{1}{\Gamma(\beta)}\int_p^{\infty}e^{-ts}s^{\beta-1}ds \right|\leq e^{-\delta p}2^{\beta -1}\left(\frac{p^{\beta}}{\Gamma(\beta)}+\frac{1}{\delta^\beta}\right).
	\end{equation}
\end{lemma}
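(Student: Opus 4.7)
The plan is to extract the factor $e^{-\delta p}$ from the integrand, reduce the remaining piece to elementary Laplace transforms of powers of $s$, and then assemble the two-term envelope on the right-hand side. The hypothesis $t\geq \delta>0$ is exactly what permits the pointwise bound $e^{-ts}\leq e^{-\delta s}$, and factoring $e^{-\delta s}=e^{-\delta p}e^{-\delta(s-p)}$ peels off the desired exponential prefactor as soon as I change variables.

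Rather than bounding the integral in a single stroke, I will split $\int_p^\infty$ at $s=2p$. On the local segment $[p,2p]$ the crude bound $e^{-ts}\leq e^{-\delta p}$ reduces the integrand to $s^{\beta-1}$, whose antiderivative produces the clean $p^\beta$-shaped contribution $e^{-\delta p}(2^\beta-1)p^\beta/\beta$; an elementary numerical inequality $(2^\beta-1)/\beta\leq 2^{\beta-1}$ for $\beta\in[1,2]$ (a convexity check) then converts this into exactly $2^{\beta-1}e^{-\delta p}p^\beta$, supplying the first piece of the envelope. For the tail on $[2p,\infty)$ I apply $e^{-ts}\leq e^{-\delta s}$, substitute $v=s-2p$, and use the subadditivity estimate $(2p+v)^{\beta-1}\leq (2p)^{\beta-1}+v^{\beta-1}$, valid because $\beta-1\in(0,1)$. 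This splits the tail into the Laplace integrals $\int_0^\infty e^{-\delta v}\,dv=1/\delta$ and $\int_0^\infty e^{-\delta v}v^{\beta-1}\,dv=\Gamma(\beta)/\delta^\beta$, multiplied by an extra decay $e^{-2\delta p}\leq e^{-\delta p}$, yielding the $\Gamma(\beta)/\delta^\beta$ piece of the envelope.

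The main obstacle will be the final bookkeeping. The subadditivity step on the tail inevitably generates a cross-term of order $e^{-\delta p}(2p)^{\beta-1}/\delta$, and I will have to check that the combination of the prefactor $2^{\beta-1}$, the spare decay $e^{-2\delta p}\leq e^{-\delta p}$, and the eventual division by $\Gamma(\beta)$ genuinely absorbs it into the target envelope $p^\beta/\Gamma(\beta)+1/\delta^\beta$. If the direct estimate is not sharp enough in every regime of $\delta p$, the fallback is Young's inequality with conjugate exponents $\beta/(\beta-1)$ and $\beta$, which reshapes any residual $p^{\beta-1}/\delta$ as a convex combination of $p^\beta$ and $1/\delta^\beta$ with explicit constants that can be compared against $2^{\beta-1}$; dividing by $\Gamma(\beta)$ at the end delivers the stated inequality.
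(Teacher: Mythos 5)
The paper does not actually prove Lemma \ref{exp1} --- it states that the proofs of the FIR lemmas are taken from \cite{JiangZhang} --- so your attempt must be judged on its own terms, and there it has a genuine gap. Your local piece on $[p,2p]$ is correct and sharp: it consumes exactly the term $e^{-\delta p}2^{\beta-1}p^{\beta}/\Gamma(\beta)$ of the envelope (the inequality $(2^\beta-1)/\beta\le 2^{\beta-1}$ does hold on $[1,2]$). The problem is the tail. After the substitution $v=s-2p$ and subadditivity, your bound on $\frac{1}{\Gamma(\beta)}\int_{2p}^\infty$ is $e^{-2\delta p}\bigl(\frac{(2p)^{\beta-1}}{\delta\Gamma(\beta)}+\frac{1}{\delta^\beta}\bigr)$, and the remaining budget is $e^{-\delta p}\,2^{\beta-1}\delta^{-\beta}$. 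Writing $x=\delta p$, closing the argument requires $e^{-x}\bigl(\frac{(2x)^{\beta-1}}{\Gamma(\beta)}+1\bigr)\le 2^{\beta-1}$; as $\beta\to 1^+$ with $x$ fixed the left side tends to $2e^{-x}$ while the right side tends to $1$, so this fails for all $x<\ln 2$. This is not a bookkeeping artifact you can absorb: in the regime $\beta\to 1^+$, $\delta p\to 0$ the lemma is essentially an equality (both sides are $\approx\delta^{-1}$), whereas subadditivity $(2p+v)^{\beta-1}\le(2p)^{\beta-1}+v^{\beta-1}$ degenerates to $1\le 1+1$ and doubles the tail. Young's inequality only redistributes the cross term between the $p^{\beta}$ and $\delta^{-\beta}$ buckets --- and the $p^{\beta}$ bucket is already exhausted by your local piece --- so it cannot recover a factor of $2$ that persists as $\delta p\to 0$.

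The fix is to avoid additive splitting of the power. Substitute $s=\sigma+p$ over the whole ray to get $e^{-tp}\int_0^\infty e^{-t\sigma}(\sigma+p)^{\beta-1}\,d\sigma$, then use $(\sigma+p)^{\beta-1}\le 2^{\beta-1}\max(\sigma,p)^{\beta-1}$ and split the $\sigma$-integral at $\sigma=p$: the piece over $[0,p]$ is at most $p^{\beta-1}\cdot p=p^{\beta}$, and the piece over $[p,\infty)$ is at most $\Gamma(\beta)t^{-\beta}\le\Gamma(\beta)\delta^{-\beta}$. Because the two regions are disjoint, no factor of $2$ is lost; dividing by $\Gamma(\beta)$ and using $e^{-tp}\le e^{-\delta p}$ gives exactly \eqref{error1}. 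This max-splitting is also the mechanism consistent with the analogous FIDR estimate in Lemma \ref{exp4}.
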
 
{ Lemma \ref{exp1} shows that when $p$ is fixed, the smaller $t$ is, the larger the truncation error will be, which may lead to an accuracy issue in numerical experimentation. In this lemma, a prescribed lower bound $\delta$ of $t$ gives the upper bound of truncation error. However, in Scheme \ref{DFIR}, the lower bound $\delta$ is the time step $\Delta t$. Thus the truncation error will become larger when the time step decreases. To illustrate the error, we present the number values of the left term of \eqref{error1} when $t$ is small in Table \ref{table1} below, which shows how the truncation error blows up when $t$ decreases. (0 in Table ~\ref{table1}  means the numerical value is less than $10^{-15}$.) We shall see that the global error blows up at the similar time steps in Figure \ref{Fig-lin01}, \ref{Fig-lin05} and \ref{Fig-lin07} of Section \ref{Sec-num},}
\begin{table}[H] 
	\centering
	\begin{tabular}{c|cccc}
		\hline
		& $p=2^5$ & $p=2^{10}$ &$p=2^{15}$& $p=2^{20}$ \\
		\hline
		$t=2^{-5}$ &1.859e$+$01 & 8.546e$-$13&0&0 \\
		\hline
		$t=2^{-6}$ &6.339e$+$01&1.523e$-$05&0&0 \\
		\hline
		$t=2^{-7}$&1.699e$+$02&9.129e$-$02 &0&0 \\
		\hline
		$t=2^{-8}$&4.052e$+$02 &1.006e$+$01&0&0 \\
		\hline
		$t=2^{-9}$ &9.136e$+$02&1.511e$+$02 &0&0 \\
		\hline
		$t=2^{-10}$ &2.005e$+$03&8.414e$+$02&3.867e$-$11&0 \\
		\hline
	\end{tabular}
	\caption{The numerical values of the left term of \eqref{error1} with different $t$ and $p$, {here $\beta=1.1$.}}  
	\label{table1}
\end{table}

The next two lemmas show how to choose the weights and nodes. But we need to be careful that the weights in \eqref{GL} and \eqref{GJ} are not the weights we need in \eqref{history2}.
\begin{lemma}\label{exp2} 
	Consider a dyadic interval $[a,b] = [2^j,2^{j+1}]$ and let $s_1,\cdots,s_n$ and $\omega_1, \cdots, \omega_n$ be the nodes and weights for n-point Gauss-Legendre quadrature on the interval. Then for $\beta\in (1,2),\ t>0$ and $n>1$,
	\begin{equation}\label{GL}
		\left| \int_a^{b}e^{-ts}s^{\beta-1}ds-\sum_{k=1}^{n}\omega_k s_k^{\beta-1}e^{-s_k t} \right|\leq 2^{\beta -\frac{3}{2}}\pi a^{\beta}\left(\frac{e^{1/e}}{4}\right)^{2n}.
	\end{equation} 
\end{lemma}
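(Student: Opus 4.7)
The plan is to invoke the standard Gauss-Legendre error bound for functions analytic in a Bernstein ellipse, after an affine change of variables mapping $[a,b]=[2^j,2^{j+1}]$ onto the reference interval $[-1,1]$. First I would set $s = a(x+3)/2$, so that $x\in[-1,1]$ corresponds to $s\in[a,2a]$ with Jacobian $a/2$; the $n$-point Gauss-Legendre rule on $[a,b]$ then inherits directly from the rule on $[-1,1]$ applied to $g(x):= s(x)^{\beta-1}e^{-t s(x)}$, and the quadrature error equals $a/2$ times the standard error, so it suffices to control $|E_n(g)|$.

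Next I would analyze the analyticity of $g$. Using the principal branch, $s^{\beta-1}$ is holomorphic on $\mathbb{C}\setminus(-\infty,0]$, whose preimage under $s\mapsto a(x+3)/2$ is $\mathbb{C}\setminus(-\infty,-3]$, while $e^{-ts}$ is entire. Hence $g$ extends holomorphically to any Bernstein ellipse $E_\rho$ whose leftmost point $-(\rho+\rho^{-1})/2$ exceeds $-3$, i.e.\ for every $\rho<3+2\sqrt{2}$. For such $\rho$, parametrising $z=(\rho e^{i\theta}+\rho^{-1}e^{-i\theta})/2$ gives $\re(s)=a[(\rho+\rho^{-1})\cos\theta+6]/4\geq 0$ throughout $E_\rho$, which yields the crucial $t$-independent bound $|e^{-ts(z)}|\leq 1$.

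The third step is to bound $M_\rho:=\max_{z\in E_\rho}|g(z)|$. Since $|s^{\beta-1}|=|s|^{\beta-1}$ under the principal branch and $|e^{-ts}|\leq 1$, we have $M_\rho\leq |s|_{\max}^{\beta-1}$ where $|s|_{\max}=a[(\rho+\rho^{-1})+6]/4$ (attained at the rightmost point of $E_\rho$, which is the point farthest from the branch singularity at $x=-3$). Plugging this into the classical analytic Gauss-Legendre estimate $|E_n(g)|\leq C(\rho)\rho^{-2n}M_\rho$, which is obtained from the exponential decay of the Chebyshev coefficients of $g$, and accounting for the $a/2$ rescaling factor, produces a bound of the shape $C'(\rho,\beta)\,a^{\beta}\,\rho^{-2n}$.

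The final step is to specialise $\rho = 4/e^{1/e}\approx 2.77$, which lies safely below $3+2\sqrt{2}$ and delivers the claimed geometric factor $(e^{1/e}/4)^{2n}$. The main obstacle — and the only technically finicky part — is the careful bookkeeping of constants at this specific $\rho$: one must verify that the generic constant $C(\rho)$ in the Bernstein-ellipse error estimate, combined with the sharp bound on $M_\rho$ and the $a/2$ Jacobian, contracts to precisely the clean prefactor $2^{\beta-3/2}\pi$ rather than a slightly worse absolute constant. This is essentially the calculation performed in \cite{JiangZhang}, and the choice $\rho=4/e^{1/e}$ is exactly what balances the geometric decay $\rho^{-2n}$ against the $(\rho+\rho^{-1}+6)^{\beta-1}$ growth of $M_\rho$ so as to produce the stated asymptotic rate.
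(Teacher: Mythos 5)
Your Bernstein--ellipse strategy is a genuinely different route from the one behind this lemma. The proof in \cite{JiangZhang} (mirrored verbatim in this paper's proof of the FIDR counterpart, Lemma \ref{exp5}) is the elementary derivative--remainder calculation: write the Gauss--Legendre error as $\frac{(b-a)^{2n+1}}{2n+1}\frac{(n!)^4}{[(2n)!]^3}\left|g^{(2n)}(\xi)\right|$ with $g(s)=e^{-ts}s^{\beta-1}$, expand $g^{(2n)}$ by the Leibniz rule, apply Stirling's formula, and finish with the identity $\max_{x>0}e^{-x}\bigl(\frac{ex}{8n}+\frac14\bigr)^{2n}=\bigl(\frac{e^{1/e}}{4}\bigr)^{2n}$, a maximization over $x=at$. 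Both the prefactor $2^{\beta-3/2}\pi$ (via Stirling) and the rate $e^{1/e}/4$ fall out of that specific computation. The setup of your argument is sound as far as it goes: the affine map $s=a(x+3)/2$, the holomorphy of $s^{\beta-1}e^{-ts}$ off the preimage $(-\infty,-3]$ of the branch cut, the admissibility condition $\rho<3+2\sqrt2$, the $t$-uniform bound $|e^{-ts(z)}|\le1$ from $\re\,s\ge0$, and $M_\rho\le\bigl(a(6+\rho+\rho^{-1})/4\bigr)^{\beta-1}$ are all correct, and this route would indeed yield an estimate of the form $C(\rho,\beta)\,a^{\beta}\rho^{-2n}$.

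The gap is exactly at the step you flag as ``bookkeeping.'' First, the constant does not contract to $2^{\beta-3/2}\pi$: with the standard analytic error estimate (e.g.\ $\frac{64M}{15(1-\rho^{-2})}\rho^{-2n}$) at $\rho=4/e^{1/e}\approx2.77$, the Jacobian $a/2$ and $M_\rho\le(2.29\,a)^{\beta-1}$ give roughly $2.45\cdot(2.29)^{\beta-1}a^{\beta}\rho^{-2n}$, which exceeds $2^{\beta-3/2}\pi a^{\beta}\rho^{-2n}$ for every $\beta\in(1,2)$; so as written you prove a strictly weaker inequality, and deferring the constant to \cite{JiangZhang} is a misattribution, since that reference performs no Bernstein-ellipse computation. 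Second, your rationale for the choice $\rho=4/e^{1/e}$ is not right: because $\beta-1>0$, $M_\rho$ stays \emph{bounded} as $\rho\uparrow3+2\sqrt2\approx5.83$, so there is no trade-off to balance, and nothing in your framework singles out $\rho\approx2.77$ (indeed your method would happily give the better rate $\rho^{-2n}$ for any $\rho<5.83$); the number $e^{1/e}/4$ originates from the maximization over $at$ in the derivative-based proof, not from an ellipse parameter. To repair this you must either accept a larger prefactor --- harmless for the paper's purposes, since the lemma only feeds the $O(\cdot)$ bound \eqref{serrorFIR} --- or switch to the Leibniz--Stirling calculation for the endgame.
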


\begin{lemma}\label{exp3} 
	Let $s_1,\cdots,s_n$ and $\omega_1, \cdots, \omega_n$ ($n\geq 2$) be the nodes and weights for
	n-point Gauss-Jacobi quadrature with the weight function $s^{\beta-1}$ on the interval. Then for $0<t<T$, $\beta \in (1,2)$ and $n>1$,
	\begin{equation}\label{GJ}
		\left| \int_0^{a}e^{-ts}s^{\beta-1}ds-\sum_{k=1}^{n}\omega_k e^{-s_k t} \right|<2\sqrt{\pi} a^{\beta} n^{3/2}\left(\frac{e}{8}\right)^{2n}\left(\frac{aT}{n}\right)^{2n}.
	\end{equation}
\end{lemma}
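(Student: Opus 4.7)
The plan is to invoke the classical remainder formula for $n$-point Gauss-Jacobi quadrature. Since the rule with weight $s^{\beta-1}$ on $[0,a]$ is exact for all polynomials of degree at most $2n-1$, the usual Hermite-interpolation / divided-difference argument yields
\[E_n(f) := \int_0^a f(s)\, s^{\beta-1}\,ds - \sum_{k=1}^n \omega_k f(s_k) = \frac{f^{(2n)}(\xi)}{(2n)!}\int_0^a \pi_n(s)^2\, s^{\beta-1}\,ds,\]
for some $\xi \in (0,a)$, where $\pi_n$ denotes the monic orthogonal polynomial of degree $n$ with respect to the weight $s^{\beta-1}$ on $[0,a]$. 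The proof then reduces to estimating each of the three factors on the right-hand side and multiplying through.

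For the derivative factor I would substitute $f(s) = e^{-ts}$ and use $0 < t < T$ together with $\xi > 0$ to obtain $|f^{(2n)}(\xi)| = t^{2n} e^{-t\xi} \leq T^{2n}$. For the Christoffel-type factor $\int_0^a \pi_n^2 s^{\beta-1}\,ds$ I would perform the affine rescaling $s = a\sigma$; this extracts a clean power $a^{2n+\beta}$ and reduces the problem to the monic shifted Jacobi polynomial on $[0,1]$ with weight $\sigma^{\beta-1}$, whose squared $L^2$-norm is controlled by the closed-form expressions for the normalization constants of $P_n^{(0,\beta-1)}$ and bounded by a quantity of order $n^{3/2}\, 4^{-2n}$ times a factor depending only on $\beta$. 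Finally, Stirling's inequality $(2n)! \geq \sqrt{4\pi n}\,(2n/e)^{2n}$ gives $1/(2n)! \leq (e/(2n))^{2n}/\sqrt{4\pi n}$.

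Assembling the three estimates produces
\[|E_n| \lesssim a^\beta\, n^{3/2} \left(\frac{e}{8}\right)^{2n}\left(\frac{aT}{n}\right)^{2n},\]
which matches the target once the numerical constant $2\sqrt{\pi}$ is tracked through the combinations of $4^{-2n}$, $(e/(2n))^{2n}$, and $T^{2n}a^{2n}$. The main obstacle I expect is the joint production of both the geometric factor $4^{-2n}$ (the engine of the super-exponential decay that remains after cancelling Stirling's $(e/(2n))^{2n}$) and the sharp polynomial prefactor $n^{3/2}$; this requires using exact coefficient information on the shifted Jacobi polynomial $P_n^{(0,\beta-1)}$ rather than a bare asymptotic, paralleling but technically distinct from the Chebyshev-type normalization exploited for Gauss-Legendre in Lemma \ref{exp2}.
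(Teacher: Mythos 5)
Your plan is correct and is essentially the route the paper takes: the paper defers the proof of this lemma to \cite{JiangZhang} but proves the direct FIDR analogue (Lemma \ref{exp6}) by exactly your argument, namely the standard Gauss--Jacobi remainder formula (formula 3.5.26 in \cite{OLBC}, which packages your Christoffel-type factor $\int_0^a\pi_n^2 s^{\beta-1}\,ds$ into the closed form $\frac{a^{2n+\beta}}{2n+\beta}\frac{(n!)^2[\Gamma(n+\beta)]^2}{(2n)![\Gamma(2n+\beta)]^2}$), the bound $|D_s^{2n}e^{-st}|\leq T^{2n}$, crude Gamma-function comparisons, and Stirling's inequality. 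Your intermediate claim that the squared norm of the monic shifted Jacobi polynomial is of order $n^{3/2}4^{-2n}$ is not where the $n^{3/2}$ actually arises (the norm itself is $O(4^{-2n})$ up to low-order polynomial factors), but this imprecision does not affect the assembly, which closes with room to spare.
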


{
	Finally, combining Lemma \ref{exp0}, \ref{exp1}, \ref{exp2} and \ref{exp3}, we can get \eqref{history2}.  An upper bound of $\e$ in \eqref{history2}  is
	\begin{multline}\label{serrorFIR}
		\e\leq e^{-\delta 2^n}2^{\beta -1} \left(\frac{2^{\beta n}}{\Gamma(\beta)}+\frac{1}{\delta^\beta} \right)
	\\	+ \frac{1}{\Gamma(\beta)}\left[2\sqrt{\pi} 2^{-\beta m} n_1^{3/2}\left(\frac{e}{8}\right)^{2n_1}\left(\frac{2^{-m}T}{n_1}\right)^{2n_1}+2^{\beta -\frac{3}{2}}\pi 2^{\beta n}\left(\frac{e^{1/e}}{4}\right)^{2n_2}\right],
	\end{multline}
	where $n_1,\ n_2$ is the number of nodes in lemma \ref{exp3}, \ref{exp2}, respectively. }

\subsection{Sum-of-exponentials in FIDR}
Here we give the proof of Theorem \ref{exp}. For convinience, we rewrite the theorem here: 

{Let $0<\delta<T$, and let $\e_0>0$ be the desired precision,  there exist $N_A=O((\log\frac{1}{\e}+\log\frac{T}{\delta})^2)$ which denotes the total number of modes, and positive real numbers $\tilde{s}_i$ and $\tilde{w}_i$ $(i=1,\cdots,N_{A})$ such that for $\quad 0<\delta\leq t\leq T$ and $\alpha>0$,
	\begin{equation}
		\begin{aligned}
			\left| \frac{1}{t^{\alpha}}-\sum_{i=1}^{N_{A}}\tilde{w}_ie^{-\tilde{s}_it}\right| \leq \e_0.
		\end{aligned}
	\end{equation}
	Here $\tilde{s}_i$ and $\tilde{w}_i$ are the nodes and weights derived from the hybrid use of the Gauss-Legendre quadrature and the Gauss-Jacobi quadrature, for details see Lemma \ref{exp5} and Lemma \ref{exp6}.}

Recall that by Lemma \ref{exp0}, we have
\begin{equation}\label{lemmaC1}
	\frac{1}{t^\alpha}=\frac{1}{\Gamma(\alpha)}\int_0^\infty e^{-ts}s^{\alpha-1}ds.
\end{equation}

\begin{lemma}\label{exp4} 
	For any $t\geq\delta>0$, $\alpha>0$,
	\begin{equation}\label{lemmaC2}
		\left|\frac{1}{\Gamma(\alpha)}\int_{p}^\infty e^{-ts}s^{\alpha-1}ds\right|\leq\frac{e^{-\delta p}}{\Gamma(\alpha)\delta p^{1-\alpha}}
	\end{equation}
\end{lemma}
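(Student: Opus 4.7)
The plan is to leverage the monotonicity of the power function $s^{\alpha-1}$ on the tail $[p,\infty)$, which is precisely what makes the bound sharp in the regime of interest. For $0<\alpha\leq 1$, $s^{\alpha-1}$ is non-increasing in $s$, so $s^{\alpha-1}\leq p^{\alpha-1}$ throughout the integration region. This simple structural observation is the key contrast with Lemma \ref{exp1} of FIR, where the analogous exponent $\beta-1=\alpha>0$ makes $s^{\beta-1}$ \emph{increasing} on $[p,\infty)$ and hence forces the less favorable $p^\beta/\Gamma(\beta)$ prefactor in that bound; this discrepancy is the root cause of FIDR's superior tail behavior as $\delta\to 0$.

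The calculation reduces to three short steps. First, use the pointwise bound $s^{\alpha-1}\leq p^{\alpha-1}$ for $s\geq p$ to pull the power out of the integral,
\[
\int_p^\infty e^{-ts}s^{\alpha-1}\,ds \;\leq\; p^{\alpha-1}\int_p^\infty e^{-ts}\,ds \;=\; \frac{p^{\alpha-1}e^{-tp}}{t}.
\]
Second, observe that $g(t):=e^{-tp}/t$ satisfies $g'(t)=-(pt+1)e^{-tp}/t^2<0$, so $g$ is strictly decreasing on $(0,\infty)$; consequently $g(t)\leq g(\delta)$ whenever $t\geq\delta$. Third, combining these estimates and dividing by $\Gamma(\alpha)$ yields
\[
\frac{1}{\Gamma(\alpha)}\int_p^\infty e^{-ts}s^{\alpha-1}\,ds \;\leq\; \frac{p^{\alpha-1}e^{-\delta p}}{\Gamma(\alpha)\,\delta} \;=\; \frac{e^{-\delta p}}{\Gamma(\alpha)\,\delta\, p^{1-\alpha}},
\]
which is the claimed inequality.

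The only subtle point is that the pointwise monotonicity argument genuinely requires $\alpha\leq 1$; since the paper's analysis is centered on $0<\alpha<1$ (and especially $\alpha\ll 1$), this is exactly the regime at hand, and the resulting estimate captures the sharp exponential decay in $\delta p$ while keeping the polynomial prefactor as mild as possible. Thus there is essentially no technical obstacle to overcome — the real work is conceptual, namely recognizing that the small-$\alpha$ structure is precisely what makes the crude $L^\infty$ bound on $s^{\alpha-1}$ nearly optimal on the tail, in contrast to the FIR setting where $s^{\beta-1}$ must be controlled more carefully.
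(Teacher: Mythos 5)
Your proof is correct and is essentially the same as the paper's: the paper shifts the variable ($s\mapsto s+p$) and then bounds $(s+p)^{\alpha-1}\leq p^{\alpha-1}$, which is exactly your pointwise bound $s^{\alpha-1}\leq p^{\alpha-1}$ on $[p,\infty)$ in disguise, followed by the same evaluation $\int_p^\infty e^{-ts}\,ds=e^{-tp}/t$ and the same monotone bound $e^{-tp}/t\leq e^{-\delta p}/\delta$. Your remark that the argument requires $\alpha\leq 1$ is a fair observation the paper glosses over (its statement reads ``$\alpha>0$'' although its own proof uses the same monotonicity).
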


\begin{proof}
	By direct calculations, we have
	\begin{equation*}
		\begin{aligned}
			\left|\frac{1}{\Gamma(\alpha)}\int_{p}^\infty e^{-ts}s^{\alpha-1}ds\right|&=\left|\frac{e^{-tp}}{\Gamma(\alpha)}\int_{0}^\infty e^{-ts}(s+p)^{\alpha-1}ds\right|\\
			&\leq\left|\frac{e^{-tp}}{\Gamma(\alpha)}\int_{0}^\infty e^{-ts}p^{\alpha-1}ds\right|\\
			&=\left|\frac{e^{-tp}p^{\alpha-1}}{\Gamma(\alpha)t}\right|\leq\frac{e^{-\delta p}}{\Gamma(\alpha)\delta p^{1-\alpha}},
		\end{aligned}
	\end{equation*}
\end{proof}
Compare Lemma \ref{exp4} and Lemma \ref{exp1}, we conclude that when $\delta$ is small, $\e_0=O(\frac 1\delta)$ in FIDR while $\e=O(\frac {1}{\delta^\beta})$. Cause $\beta>1$, sum-of-exponentials in FIDR will get better result if $\delta$ is extremely small. {As we mentioned in theorem \ref{exp}, $\delta\leq \Delta t$.  Thus the scheme of FIDR works better when the time step $\Delta t$ is small, which is shown in the numerical experiments in Section 5. }

\begin{lemma}\label{exp5}
	Consider a dyadic interval $[a,b]=[2^j,2^{j+1}]$ and let $s_1,\cdots, s_n$ and $w_1,\cdots,w_n$ be the nodes and weights for n-point Gauss-Legendre quadrature on the interval. Then for $\alpha\in(0,1)$ and $n>1$,
	\begin{equation}
		\left|\int_{a}^be^{-ts}s^{\alpha-1}ds-\sum_{k=1}^nw_ks_k^{\alpha-1}e^{-s_kt}\right|<2\sqrt{2}\pi a^{\alpha}\left(\frac{e^{1/e}}{4}\right)^{2n}.
	\end{equation}
\end{lemma}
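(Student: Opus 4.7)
The plan is to mirror the argument used to prove Lemma \ref{exp2}, the analogous sum-of-exponentials estimate for $\beta\in(1,2)$ from \cite{JiangZhang}, and to carry out the estimates for the present case $\alpha\in(0,1)$, in which the integrand $f(s)=e^{-ts}s^{\alpha-1}$ has an integrable singularity at $s=0$ rather than only a derivative singularity. The dyadic scaling $b-a=a$ is the essential ingredient: it ensures that the distance from the interval $[a,2a]$ to the branch point at the origin scales exactly with the interval length, so that the Gauss--Legendre convergence rate depends only on $n$ while the pre-factor depends on the geometric size as $a^{\alpha}$.

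First I would rescale the integral from $[a,2a]$ to the reference interval $[-1,1]$ via $s=a(3+x)/2$, which converts the integral into $(a/2)^{\alpha}\int_{-1}^{1}e^{-at(3+x)/2}(3+x)^{\alpha-1}\,dx$ and transforms the Gauss--Legendre nodes and weights correspondingly. The transformed integrand $g(x)=e^{-at(3+x)/2}(3+x)^{\alpha-1}$ is analytic in the slit plane $\mathbb{C}\setminus(-\infty,-3]$. The next step is to invoke the classical Gauss--Legendre error bound for integrands analytic in a Bernstein ellipse $E_{\rho}$ with foci at $\pm 1$, which controls the quadrature error by a constant multiple of $\rho^{-2n}\max_{z\in E_{\rho}}|g(z)|$. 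The third step is to choose $\rho$ so as to balance the geometric decay factor $\rho^{-2n}$ against the blow-up of $(3+z)^{\alpha-1}$ near the branch point at $z=-3$, and to verify that on this $E_{\rho}$ the real part $\mathrm{Re}(3+z)$ remains positive so that $|e^{-at(3+z)/2}|\le 1$ and the $t$-dependence drops out of the maximum.

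The delicate technical point is the optimization that produces the constant $e^{1/e}/4$. This same optimization already appears in the proof of Lemma \ref{exp2} and is insensitive to whether the exponent is $\alpha-1<0$ or $\beta-1>0$: it is governed only by the geometric distance from $[-1,1]$ to the branch point and by the $n$-dependent balancing of the two competing factors. What differs is the bound for $|g|$ on $E_{\rho}$. I anticipate that the \emph{main obstacle} is handling $(3+z)^{\alpha-1}$ when $\alpha-1<0$, where one must argue that $|3+z|$ admits a universal positive lower bound on the chosen $E_{\rho}$ so that $|(3+z)^{\alpha-1}|$ is uniformly controlled for $\alpha\in(0,1)$. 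Once this is done, combining $a^{\alpha}$ from the change of variables, $(e^{1/e}/4)^{2n}$ from the optimal ellipse parameter, and the numerical constants from the uniform bound on $|g|$ yields the claimed pre-factor $2\sqrt{2}\pi=2^{3/2}\pi$, which is slightly larger than the $2^{\beta-3/2}\pi$ of Lemma \ref{exp2}, reflecting the stronger singular behavior of $s^{\alpha-1}$ near the origin in the present regime.
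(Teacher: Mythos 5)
Your plan is a genuinely different route from the paper's. The paper does not use analyticity at all: it invokes the classical $2n$-th-derivative error representation for $n$-point Gauss--Legendre quadrature (formula 3.5.27 of \cite{OLBC}), $\frac{(b-a)^{2n+1}}{2n+1}\frac{(n!)^4}{[(2n)!]^3}\bigl|g^{(2n)}(\xi)\bigr|$ with $g(s)=e^{-st}s^{\alpha-1}$, bounds $g^{(2n)}$ by the Leibniz rule using $|(\alpha-1)(\alpha-2)\cdots(\alpha-k)|\le k!$ and Stirling's inequality, applies Stirling again to $(n!)^4/[(2n)!]^3$, and uses $b-a=a$ and $s\ge a$ to arrive at $\frac{4\sqrt{2}\pi n}{2n+1}a^{\alpha}e^{-at}\bigl(\frac{eat}{8n}+\frac14\bigr)^{2n}$; the constant $e^{1/e}/4$ then comes from the elementary optimization $\max_{x>0}e^{-x}\bigl(\frac{ex}{8n}+\frac14\bigr)^{2n}=(e^{1/e}/4)^{2n}$ over $x=at$. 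This is also how Lemma \ref{exp2} is proved in \cite{JiangZhang}.

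This exposes the one concrete gap in your proposal: you assert that the optimization producing $e^{1/e}/4$ ``is governed only by the geometric distance from $[-1,1]$ to the branch point,'' and that it therefore transfers to your Bernstein-ellipse argument. That is a misdiagnosis. In the ellipse framework the natural rate is $\rho^{-2n}$ for any admissible $\rho<3+2\sqrt{2}\approx 5.83$ (the conformal distance to the branch point at $z=-3$), which is a \emph{better} rate than $(e^{1/e}/4)^{2n}\approx(0.361)^{2n}$; the number $e^{1/e}/4$ is an artifact of the derivative-based bound and of the maximization over $at$, and no ellipse choice produces it ``automatically.'' To recover the exact stated inequality you would have to set $\rho=4e^{-1/e}\approx 2.77$ by fiat and then check that the accumulated prefactor (the constant in the ellipse error bound, e.g.\ $\frac{64}{15}(1-\rho^{-2})^{-1}$, times $(a/2)^{\alpha}$ from the change of variables, times the uniform bound on $|g|$ over $E_\rho$) does not exceed $2\sqrt{2}\pi$. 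That verification is feasible --- on $E_{\rho}$ with this $\rho$ one has $|3+z|\ge 3-\tfrac12(\rho+\rho^{-1})>1$, so $|(3+z)^{\alpha-1}|\le 1$ uniformly in $\alpha\in(0,1)$ and $|e^{-at(3+z)/2}|\le 1$, which incidentally shows that the step you single out as the ``main obstacle'' is the easy part --- but it is missing from your plan, and without it you have proved a bound of the right shape rather than the bound stated in Lemma \ref{exp5}. Relatedly, your closing remark that the prefactor $2\sqrt{2}\pi$ reflects ``the stronger singular behavior of $s^{\alpha-1}$ near the origin'' is not right either: on the dyadic interval $[a,2a]$ the integrand is smooth, and in the paper the prefactor comes purely from the Stirling/Leibniz bookkeeping.
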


\begin{proof}
	Based on formula (3.5.27) in \cite{OLBC}, the standard estimate for n-point Gauss-Legendre
	quadrature yields,
	\begin{equation}\label{approxGL}
		\left|\int_{a}^be^{-ts}s^{\alpha-1}ds-\sum_{k=1}^nw_ks_k^{\alpha-1}e^{-s_kt}\right|=\frac{(b-a)^{2n+1}}{2n+1}\frac{(n!)^4}{[(2n)!]^3}\left| g^{(2n)}(s)\right|, \quad s\in (a,b),
	\end{equation}
	where $g(s)=e^{-st}s^{\alpha-1}$.\\
	Applying Stirling’s approximation
	\begin{equation}
		\sqrt{2\pi}n^{n+1/2}e^{-n}<n!<2\sqrt{\pi}n^{n+1/2}e^{-n}
	\end{equation}
	
	\begin{equation}
		\begin{aligned}
			\left| g^{(2n)}(s)\right|&=\left|\sum_{k=0}^{2n}\left(\begin{matrix}
				2n\\k
			\end{matrix}\right)(D_s^{2n-k}e^{-s t})(D_s^ks^{\alpha-1})\right|\\
			&=\left|\sum_{k=0}^{2n}\left(\begin{matrix}
				2n\\k
			\end{matrix}\right)(-t)^{2n-k}e^{-st}P_{\alpha-1}^k s^{\alpha-k-1}\right|\\
			&=\left|\sum_{k=0}^{2n}\left(\begin{matrix}
				2n\\k
			\end{matrix}\right)P_{k-\alpha}^kt^{2n-k}e^{-st} s^{\alpha-k-1}\right|,\\
		\end{aligned}
	\end{equation}
	and
	\begin{equation}
		\begin{aligned}
			\left|\sum_{k=0}^{2n}\left(\begin{matrix}
				2n\\k
			\end{matrix}\right)P_{k-\alpha}^kt^{2n-k}e^{-st} s^{\alpha-k-1}\right|&\leq\left|e^{-st} s^{\alpha-1}\sum_{k=0}^{2n}\left(\begin{matrix}
				2n\\k
			\end{matrix}\right)k!t^{2n-k}s^{-k}\right|\\
			&\leq\left|e^{-st} s^{\alpha-1}\sum_{k=0}^{2n}\left(\begin{matrix}
				2n\\k
			\end{matrix}\right)(2\sqrt{\pi}k^{k+1/2}e^{-k})t^{2n-k}s^{-k}\right|\\
			&\leq\left|e^{-st} s^{\alpha-1}\sum_{k=0}^{2n}\left(\begin{matrix}
				2n\\k
			\end{matrix}\right)(2\sqrt{\pi}(2n)^{k+1/2}e^{-k})t^{2n-k}s^{-k}\right|\\
			&=\left|2\sqrt{2n\pi}e^{-st} s^{\alpha-1}\left(t+\frac{2n}{es}\right)^{2n}\right|.
		\end{aligned}
	\end{equation}
	Thus we have
	\begin{equation}\label{approxf}
		\left| g^{(2n)}(s)\right|\leq \left|2\sqrt{2n\pi}e^{-st} s^{\alpha-1}\left(t+\frac{2n}{es}\right)^{2n}\right|.
	\end{equation}
	Meanwhile, based on Stirling’s approximation,
	\begin{equation}\label{approxn}
		\begin{aligned}
			\frac{(n!)^4}{[(2n)!]^3}<2\sqrt{\pi}\left(\frac{e}{8}\right)^{2n}\frac{\sqrt{n}}{n^{2n}}.
		\end{aligned}
	\end{equation}
	Taking \eqref{approxf}, \eqref{approxn} into \eqref{approxGL}, and recall that $b=2a$, we have
	\begin{equation}\label{approxGL2}
		\begin{aligned}
			\left|\int_{a}^be^{-ts}s^{\alpha-1}ds-\sum_{k=1}^nw_ks_k^{\alpha-1}e^{-s_kt}\right|&\leq\frac{(b-a)^{2n+1}}{2n+1}\frac{(n!)^4}{[(2n)!]^3}\max_{a<s<b}\left| g^{(2n)}(s)\right|\\
			&<\frac{a^{2n+1}4\sqrt{2}\pi n}{2n+1}e^{-at}a^{\alpha-1}\left(\frac{et}{8n}+\frac{1}{4a}\right)^{2n}\\
			&=\frac{4\sqrt{2}\pi n}{2n+1}a^{\alpha}e^{-at}\left(\frac{eat}{8n}+\frac{1}{4}\right)^{2n}.
		\end{aligned}
	\end{equation}
	And we have 
	\begin{equation*}
		\max_{x>0}e^{-x}\left(\frac{ex}{8n}+\frac{1}{4}\right)^{2n}=\left(\frac{e^{1/e}}{4}\right)^{2n},\qquad n\geq 2,
	\end{equation*}
	\eqref{approxGL2} becomes,
	\begin{equation}
		\begin{aligned}
			\left|\int_{a}^be^{-ts}s^{\alpha-1}ds-\sum_{k=1}^nw_ks_k^{\alpha-1}e^{-s_kt}\right|< 2\sqrt{2}\pi a^{\alpha}\left(\frac{e^{1/e}}{4}\right)^{2n}.
		\end{aligned}
	\end{equation}
\end{proof} 

\begin{lemma}\label{exp6}
	let $s_1,\cdots, s_n$ and $w_1,\cdots,w_n$ be the nodes and weights for n-point Gauss-Jacobi quadrature with the weight function $s^{\alpha-1}$ on the interval. Then for $0<t<T,\ \alpha\in(0,1)$ and $n>1$,
	\begin{equation}
		\begin{aligned}
			\left|\int_{0}^ae^{-ts}s^{\alpha-1}ds-\sum_{k=1}^nw_ke^{-s_kt}\right|<\frac{4\sqrt{\pi}a^{\alpha}}{e^2}\frac{(2n-1)n^{3/2}}{(2n+\alpha)}\left[\frac{aenT}{2(2n-1)^2}\right]^{2n}.
		\end{aligned}
	\end{equation}
\end{lemma}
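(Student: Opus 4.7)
The strategy mirrors the derivation of Lemma \ref{exp3}, now adapted to the weight $s^{\alpha-1}$ with $\alpha \in (0,1)$ in place of $s^{\beta-1}$ with $\beta \in (1,2)$. First I would invoke the classical error representation for $n$-point Gauss-Jacobi quadrature on $[0,a]$ against the weight $w(s) = s^{\alpha-1}$. Writing $g(s) = e^{-ts}$, the error takes the form
\begin{equation*}
\int_0^a e^{-ts} s^{\alpha-1}\, ds - \sum_{k=1}^n w_k e^{-s_k t} \;=\; \frac{g^{(2n)}(\xi)}{(2n)!}\,\int_0^a s^{\alpha-1} \pi_n(s)^2\, ds, \qquad \xi \in (0,a),
\end{equation*}
where $\pi_n$ is the monic degree-$n$ orthogonal polynomial for the weight $s^{\alpha-1}$ on $[0,a]$. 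Because $|g^{(2n)}(s)| = t^{2n} e^{-ts} \leq T^{2n}$ on the relevant range, the derivative factor is immediately controlled, and the entire analysis reduces to estimating the weighted squared-norm integral and the factorial prefactor.

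Next, a rescaling $s = a\sigma$ identifies $\pi_n$ with $a^n$ times the monic shifted Jacobi polynomial associated with $(1-\sigma)^0 \sigma^{\alpha-1}$ on $[0,1]$, whose squared norm is known in closed form. This produces a clean identity of the shape
\begin{equation*}
\int_0^a s^{\alpha-1} \pi_n(s)^2\, ds \;=\; \frac{a^{2n+\alpha}\, (n!)^2\, \Gamma(n+\alpha)\, \Gamma(n+1)}{(2n+\alpha)\, \Gamma(2n+\alpha)\, \Gamma(2n+1)},
\end{equation*}
and, after combining with $(2n)! = \Gamma(2n+1)$ and the bound $|g^{(2n)}(\xi)| \leq T^{2n}$, the raw error estimate reduces to $T^{2n} a^{2n+\alpha}/(2n+\alpha)$ times a ratio of Gamma functions.

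Finally I would push this ratio through the two-sided Stirling inequality $\sqrt{2\pi}\, m^{m+1/2} e^{-m} < \Gamma(m+1) < 2\sqrt{\pi}\, m^{m+1/2} e^{-m}$, as in the proof of Lemma \ref{exp3}. The dominant asymptotics organize themselves into a geometric factor of the form $\bigl[a e n T / (2(2n-1)^2)\bigr]^{2n}$, while the residual algebraic terms consolidate into the prefactor $(2n-1) n^{3/2}/(2n+\alpha)$ after minor inequalities valid for $n \geq 2$. The constant $4\sqrt{\pi}/e^2$ arises as a convenient uniform envelope for the Stirling remainder terms.

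I expect the bookkeeping in this last step to be the main obstacle: matching the exponents of $n$, the powers of $2$, and cleanly isolating the geometric-decay factor $\bigl[a e n T/(2(2n-1)^2)\bigr]^{2n}$ from the polynomial prefactor is delicate, and any loose application of Stirling would degrade the sharp super-geometric convergence rate in $n$ that is essential for the $N_A = O((\log(1/\varepsilon_0) + \log(T/\delta))^2)$ complexity claimed in Theorem \ref{exp}.
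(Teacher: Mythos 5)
Your proposal is correct in outline and follows essentially the same route as the paper: the classical Gauss--Jacobi error representation (quoted in the paper as formula 3.5.26 of \cite{OLBC}), the trivial bound $|D_s^{2n}e^{-st}|=t^{2n}e^{-st}\le T^{2n}$, and Stirling's inequality to extract the super-geometric factor. One caveat: the closed form you wrote for $\int_0^a s^{\alpha-1}\pi_n(s)^2\,ds$ is not quite the correct identity --- the true norm carries $[\Gamma(n+\alpha)]^2$ and $[\Gamma(2n+\alpha)]^2$ rather than $\Gamma(n+\alpha)\Gamma(n+1)$ and $\Gamma(2n+\alpha)\Gamma(2n+1)$, so as stated your constant \emph{understates} the exact one (by a factor of order $2^{\alpha-1}$) and a chain of inequalities built on it would not literally bound the true error; the paper arrives at precisely the quantities you wrote, but only as one-sided estimates via $\Gamma(n+\alpha)<n!$ and $\Gamma(2n+\alpha)>(2n-1)!$, after which the Stirling bookkeeping proceeds exactly as you describe.
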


\begin{proof}
	Based on formula 3.5.26 in \cite{OLBC}, the standard estimate for n-point Gauss-Jacobi
	quadrature yields,
	\begin{equation}
		\begin{aligned}
			\left|\int_{0}^ae^{-ts}s^{\alpha-1}ds-\sum_{k=1}^nw_ke^{-s_kt}\right|=\frac{a^{2n+\alpha}}{2n+\alpha}\frac{(n!)^2[\Gamma(n+\alpha)]^2}{(2n!)[\Gamma(2n+\alpha)]^2}\left|D_s^{2n}e^{-st}\right|,\qquad s\in(0,a)
		\end{aligned}
	\end{equation}
	where we have $\Gamma(n+\alpha)<\Gamma(n+1)=n!,\ \Gamma(2n+\alpha)>\Gamma(2n)=(2n-1)!$. Thus
	\begin{equation*}
		\begin{aligned}
			\left|\int_{0}^ae^{-ts}s^{\alpha-1}ds-\sum_{k=1}^nw_ke^{-s_kt}\right|&<\frac{a^{2n+\alpha}}{2n+\alpha}\frac{(n!)^2(n!)^2}{(2n!)[(2n-1)!]^2}t^{2n}e^{-st}\\
			&\leq\frac{a^{2n+\alpha}}{2n+\alpha}\frac{4\sqrt{\pi}(2n-1)n^{3/2}}{e^2}\left[\frac{en}{2(2n-1)^2}\right]^{2n}t^{2n}e^{-st}\\
			&\leq\frac{4\sqrt{\pi}a^{\alpha}}{e^2}\frac{(2n-1)n^{3/2}}{(2n+\alpha)}\left[\frac{aenT}{2(2n-1)^2}\right]^{2n}.
		\end{aligned}
	\end{equation*}
\end{proof}
Finally,  we collect the results of Lemma \ref{exp4}, \ref{exp5} and \ref{exp6} and get Theorem \ref{exp}. 

We also give a upper bound of $\e_0$ in \eqref{theoremA1}. {
\begin{theorem}\label{theoremA2}
	Consider the sum-of-exponential approximation of FIDR defined in Theorem \ref{exp}. If we transform the fraction $1/t^\alpha$ into integral with Lemma \ref{exp0}, split the integral interval into $[0,2^{-m}],\ [2^{-m},2^{-m+1}],\ \cdots,\ [2^{n-1},2^n],\ [2^n,\infty]$, apply Gauss-Jacobi quadrature in $[0,2^{-m}]$ with $n_1$ nodes, apply Gauss-Legendre quadrature in $[2^{-m},2^{-m+1}],\ \cdots,\ [2^{n-1},2^n]$ with $n_2$ nodes, and drop the interval $[2^n,\infty]$, then the upper bound of $\e_0$ is,
	\begin{equation}\label{serrorFIDR}
		\begin{split}
			\e_0\leq& \frac{e^{-\delta 2^n}}{\Gamma(\alpha)\delta 2^{(1-\alpha)n}}\\
			&+\frac{1}{\Gamma(\alpha)}\left[\frac{4\sqrt{\pi}2^{-\alpha m}}{e^2}\frac{(2n_1-1)n_1^{3/2}}{(2n_1+\alpha)}\left[\frac{2^{-m}en_1T}{2(2n_1-1)^2}\right]^{2n_1}+2\sqrt{2}\pi 2^{\alpha n}\left(\frac{e^{1/e}}{4}\right)^{2n_2}\right],
		\end{split}
	\end{equation}
	where $n_1,\ n_2$ is the number of nodes in Lemma \ref{exp6}, \ref{exp5}, respectively. The total number of modes $N_A$ the sum-of-exponential approximation of FIDR is $N_A=n_1+(m+n)n_2$.  
\end{theorem}}

{Now we can see the difference of two sum-of-exponential approximations by comparing the two upper bounds of the errors, i.e. \eqref{serrorFIR} and \eqref{serrorFIDR}. When $T=O(1)$ and $\delta=O(\Delta t)$ is very small, the error bound is dominated by the first term. In this scenario, the error bound \eqref{serrorFIDR} is smaller because the first term on the right side of \eqref{serrorFIDR} is smaller. }

{If $\delta=O(\Delta t)$ is not too small, or $\delta$ is small but we choose sufficient large $n$ so that $\frac{e^{-\delta 2^n}}{\Gamma(\alpha)\delta 2^{(1-\alpha)n}}$ is acceptable. Then the error bound in \eqref{serrorFIDR} is dominated by the last term $2\sqrt{2}\pi 2^{\alpha n}\left(\frac{e^{1/e}}{4}\right)^{2n_2}$, since $2^{\alpha n}$ grows exponentially when $n$ becomes larger. In that case, a small $\alpha$ can make the error bound much smaller, because the last term decreases exponentially when $\alpha$ decreases. We summarize the reasoning above as follows.}

{\begin{corollary}\label{FIDR-alpha-error}
	Consider the sum-of-exponential approximation of FIDR in Theorem \ref{theoremA2}. If $T=O(1)$ and $m\ll n$, then the upper bound of $\e_0$ (which is the right side of \eqref{serrorFIDR}) reduces when $\alpha$ gets smaller.
\end{corollary}}

{However, due to the complicated expression of the upper bound, there is no obvious way to give more specific description of the decreasing behavior as $\alpha$ tends to $0$. We shall numerically investigate the dependence of $\varepsilon_0$ on $\alpha$ in Section \ref{num:sum}, and show that the sum-of-exponential approximation of FIDR leads to a better accuracy when compared with its counterpart of FIR.}

\section{Stability and error analysis}\label{Sec-stab}
{In this section, we show the stability and error analysis of Scheme \ref{DFIR} and Scheme \ref{DFIDR} in Section 4.1 and Section 4.2 respectively, for the the initial value problem of the linear fractional diffusion equation \eqref{problem}. 
	We remark that the results in Section 4.1 can be viewed as the improved estimates of those in \cite{JiangZhang}, because they do not quantify the relationship between the global error and $\alpha$. }
\subsection{Stability and error analysis of FIR}
Consider the initial-boundary value problem \eqref{problem}. We recall the full scheme \eqref{DFIR1} and \eqref{dif1} here for convenience.
\begin{equation}
	\begin{aligned}
		& \mathbb{D}_t^\alpha u_i^n=\delta^2_x u_i^n+f_i^n, &1\leq i\leq N_S-1,1\leq n\leq N_T,\\
		&\mathbb{D}_t^\alpha u_0^n=\frac 2h\left[\delta_x u_{\frac 12}^n- \mathbb{D}_t^{\frac \alpha 2} u_0^n\right]+f_0^n,\\
		&\mathbb{D}_t^\alpha u_{N_S}^n=\frac 2 h \left[-\delta_x u_{N_S-\frac 12}^n-  \mathbb{D}_t^{\frac \alpha 2} u_{N_S}^n\right]+f_{N_S}^n,\\
		&u_i^0=u_0(x_i),&0\leq i\leq N_S
	\end{aligned}
\end{equation}\\
{Here $\mathbb{D}_t^\alpha u^n$ is defined in \eqref{DFIR1}, we can write it in following form}
\begin{equation}\label{problem2}
	\begin{aligned}
		\mathbb{D}_t^\alpha u^n=&\frac{\Delta t^{-\alpha}}{\Gamma(1-\alpha)}\left(\frac{u^n}{1-\alpha}-(\frac{\alpha}{1-\alpha}+a_0)u^{n-1}\right.\\
		&\qquad\left.-\sum_{l=1}^{n-2}(a_{n-l-1}+b_{n-l-2})u^l-\left(b_{n-2}+\frac{1}{n^\alpha}\right)u^0\right)
	\end{aligned}
\end{equation}
where
\begin{equation*}
	\begin{aligned}
		&a_n=\alpha\Delta t^\alpha\sum_{j=1}^{N_{\text{exp}}}\omega_j e^{-ns_j\Delta t}\lambda_j^1, &b_n=\alpha \Delta t^\alpha\sum_{j=1}^{N_{\text{exp}}}\omega_j e^{-ns_j\Delta t}\lambda_j^2,\\
		&\lambda_j^1=\frac{e^{-s_j\Delta t}}{s_j^2\Delta t}\left(e^{-s_j\Delta t}-1+s_j\Delta t\right),
		&\lambda_j^2=\frac{e^{-s_j\Delta t}}{s_j^2\Delta t}\left(1-e^{-s_j\Delta t}-e^{-s_j\Delta t}s_j\Delta t\right).
	\end{aligned}
\end{equation*}


{To estimate the global error, we first carry out a prior estimate in Theorem \ref{FIRprior}, then we use the prior estimate to get the upper bound of the local error. Finally we obtain the global error by summing up the local error. }

\begin{theorem}[prior estimate]\label{FIRprior}
	Suppose $\{u_i^k|0\leq i\leq N_S,0\leq k\leq N_T\}$ is the
	solution of the finite difference scheme \eqref{dif1}. Then for any $1\leq n\leq N_T$,
	\begin{equation}\label{theorem1}
		\begin{aligned}
			\Delta t\sum_{k=1}^{n}||u^k||_{\infty}^2\leq&\frac{2\left(1+\sqrt{1+L^2\mu}\right)}{L\mu}\bigg(\rho||u^0||^2+\kappa[(u_0^0)^2+(u_{N_S}^0)^2]\\
			&+\frac{\Delta t}{8\nu}\sum_{k=1}^{n}\left[(hf_0^k)^2+(hf_{N_S}^k)^2\right]+\frac{\Delta t}{\mu}\sum_{k=1}^n h\sum_{i=1}^{N_S-1}(f_i^k)^2\bigg),
		\end{aligned}
	\end{equation} 
	where
	\begin{align*}
		&\rho = \frac{t_n^{1-\alpha}-\alpha(1-\alpha)\e t_{n-1}\Delta t}{\Gamma(2-\alpha)}, &\mu=\frac{t_n^{-\alpha}-2\alpha \e t_{n-1}}{\Gamma(1-\alpha)},\\
		&\kappa =\frac{t_n^{1-\frac \alpha 2}-\frac \alpha 2\left(1-\frac \alpha 2\right)\e t_{n-1}\Delta t}{\Gamma\left(2-\frac \alpha 2\right)},&\nu=\frac{t_n^{-\frac \alpha 2}-\alpha \e t_{n-1}}{\Gamma\left(1-\frac \alpha 2\right)}.
	\end{align*}
\end{theorem}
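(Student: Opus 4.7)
The plan is to establish this prior estimate via an energy-type argument adapted to the nonlocal-in-time FIR scheme, combining a coercivity lemma for the discrete operator $\mathbb{D}_t^\alpha$ of \eqref{problem2} with discrete summation by parts for $\delta_x^2$. I would first test the interior equation against $h\, u_i^n$ and sum over $i=1,\ldots,N_S-1$, while the two boundary equations are multiplied by $\tfrac{h}{2} u_0^n$ and $\tfrac{h}{2} u_{N_S}^n$ respectively. After summation by parts, the interior boundary fluxes $\delta_x u_{1/2}^n$ and $-\delta_x u_{N_S-\frac 12}^n$ exactly cancel with the boundary-equation contributions, leaving behind an elliptic term $-\sum_i h(\delta_x u_{i+1/2}^n)^2 \le 0$ which can be discarded, the $\mathbb{D}_t^\alpha$ contributions on all nodes, the two $\mathbb{D}_t^{\alpha/2}$ contributions on the boundary nodes, and the forcing pairings.

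The central technical ingredient is a discrete coercivity estimate for $\mathbb{D}_t^\alpha$ in the convolution form \eqref{problem2}. Using the explicit representation of the coefficients $a_n, b_n$ built from the sum-of-exponentials, I would establish, after summing over $k=1,\ldots,n$,
\begin{equation*}
\Delta t \sum_{k=1}^n (\mathbb{D}_t^\alpha u^k)\, u^k \;\ge\; \mu\, \Delta t \sum_{k=1}^n (u^k)^2 \;-\; \rho\, (u^0)^2,
\end{equation*}
and the analogous lower bound for $\mathbb{D}_t^{\alpha/2}$ on the boundary nodes with constants $\nu$ and $\varrho$. These four constants match those stated in \eqref{theorem1} precisely because the sum-of-exponential kernel reproduces $t^{-\alpha}/\Gamma(1-\alpha)$ up to the sum-of-exponential error $\e$; tracking this correction through the standard positive-definiteness argument for the exact L1 kernel (which exploits the monotonicity and convexity of $a_l^{(\alpha)}=(l+1)^{1-\alpha}-l^{1-\alpha}$) produces exactly the subtractive terms $\alpha \e\, t_{n-1}$ in $\mu,\nu$ and the terms $\alpha(1-\alpha)\e\, t_{n-1}\Delta t$ in $\rho,\varrho$. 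Crucially, the error enters multiplied by $\alpha$, which is the mechanism behind the small-$\alpha$ improvement later exploited in Theorem \ref{FIRerror}.

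Summing the tested equations over $k$, discarding the nonpositive elliptic piece, applying the coercivity bounds on the left, and estimating the forcing pairings $h\sum_i f_i^k u_i^k$ and their boundary analogues by Cauchy--Schwarz together with weighted Young inequalities (with weights chosen to match $\mu$ and $4\nu$ on the absorption side, producing the $\tfrac{1}{\mu}$ and $\tfrac{1}{8\nu}$ factors visible in \eqref{theorem1}), one obtains an inequality of the schematic form
\begin{equation*}
\mu\, \Delta t \sum_{k=1}^n \|u^k\|_2^2 \;+\; 4\nu\, \Delta t \sum_{k=1}^n\big[(u_0^k)^2+(u_{N_S}^k)^2\big] \;\le\; 2\,(\text{data terms on the right of \eqref{theorem1}}).
\end{equation*}
Converting the discrete $L^2$ norm to the $L^\infty$ norm through the embedding $\|u^k\|_2^2 \le L\,\|u^k\|_\infty^2$ on an interval of length $L := x_r - x_l$ turns this into a quadratic inequality in $Y^{1/2}$, where $Y := \Delta t \sum_k \|u^k\|_\infty^2$, whose explicit resolution produces exactly the prefactor $2\bigl(1+\sqrt{1+L^2\mu}\bigr)/(L\mu)$ appearing in \eqref{theorem1}.

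The main obstacle will be the coercivity lemma itself. Because the sum-of-exponential coefficients $a_n, b_n$ are no longer the standard L1 weights, the classical positive-definiteness argument — which rests on monotonicity of the exact kernel $t^{-\alpha}$ — does not apply verbatim; one must quantify carefully how much the discrete kernel deviates from the exact convolution kernel $t^{-\alpha}/\Gamma(1-\alpha)$ and show that this deviation is controllable by $\alpha\e\, t_{n-1}$, so it can be absorbed into the explicit corrections in $\mu,\nu,\rho,\varrho$. A secondary subtlety is the simultaneous handling of the two fractional boundary conditions, which requires running the same coercivity template in parallel for $\mathbb{D}_t^{\alpha/2}$ and matching weights in the Young step so that the absorption closes cleanly both on the interior $L^2$ estimate and on the two pointwise boundary estimates.
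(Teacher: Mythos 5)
Your overall architecture --- testing the interior equation against $h u_i^k$, the boundary equations against $\tfrac h2 u_0^k$ and $\tfrac h2 u_{N_S}^k$, summation by parts, a coercivity lemma for $\mathbb{D}_t^\alpha$ in its convolution form with corrections of size $\alpha\e\, t_{n-1}$ entering $\mu,\nu,\rho,\varrho$, and weighted Cauchy--Schwarz on the forcing --- is exactly the paper's route, and your account of the coercivity lemma (the paper's Lemma \ref{app}) correctly identifies both the mechanism and the main technical difficulty.

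However, the final step of your argument fails. You propose to \emph{discard} the elliptic term produced by summation by parts and then pass from the resulting $L^2$-in-space bound to the $L^\infty$-in-space bound of \eqref{theorem1} via ``$\|u^k\|_2^2 \le L\,\|u^k\|_\infty^2$''. That inequality points in the wrong direction: it bounds the $L^2$ norm \emph{by} the $L^\infty$ norm, so an upper bound on $\Delta t\sum_k\|u^k\|^2$ gives no control whatsoever on $Y=\Delta t\sum_k\|u^k\|_\infty^2$, and no quadratic inequality in $Y^{1/2}$ arises. The gradient term cannot be thrown away: the paper keeps $\Delta t\sum_k\|\delta_x u^k\|^2$ on the left of \eqref{theorem1.5} precisely because the passage to $\|\cdot\|_\infty$ goes through the discrete Sobolev-type inequality of Lemma \ref{mesh},
\begin{equation*}
\|u\|_{\infty}^2\leq \theta\, \|\delta_x u\|^2+\Bigl(\tfrac{1}{\theta}+\tfrac 1 L\Bigr)\|u\|^2,\qquad \forall\,\theta>0,
\end{equation*}
applied with $\theta$ chosen so that $\frac{1/\theta+1/L}{\theta}=\frac{\mu}{4}$; solving that quadratic in $\theta$ is what produces the prefactor $2\bigl(1+\sqrt{1+L^2\mu}\bigr)/(L\mu)$, not any resolution of a quadratic in $Y^{1/2}$. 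To repair your proof, retain $\Delta t\sum_k\|\delta_x u^k\|^2$ through the energy estimate and insert this embedding with the optimal $\theta$; the rest of your outline then closes as in the paper.
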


\begin{proof}
	Multiplying $hu_i^k$ on both sides of the first equation of \eqref{dif1}, and summing up for $i$ from 1 to
	$N_S-1$, we have 
	\begin{align*}
		h\sum_{i=1}^{N_S-1}\left( \mathbb{D}_t^\alpha u_i^k\right)u_i^k-h\sum_{i=1}^{N_S-1}(\delta_x^2 u_i^k)u_i^k=h\sum_{i=1}^{N_S-1}f_i^ku_i^k.
	\end{align*}
	Multiplying $\frac h2 u_0^k$ and $\frac h2 u_{N_S}^k$ on both sides of the second equation and the third equation of \eqref{dif1}, respectively, then
	adding the results with the above identity, we obtain
	\begin{equation}\label{theorem1.1}
		\begin{aligned}
			&\left( \mathbb{D}_t^\alpha u^k,u^k\right)+\left[-\left(\delta_xu_{\frac{1}{2}}^k\right)u_0^k-h\sum_{i=1}^{N_S-1}\left(\delta_x^2 u_i^k\right)u_i^k+\left(\delta_x u_{N_S-\frac 12}^k\right)u_{N_S}^k\right]\\
			&\quad +\left( \mathbb{D}_t^{\frac \alpha 2} u_0^k\right)u_0^k+\left( \mathbb{D}_t^{\frac \alpha 2} u_{N_S}^k\right)u_{N_S}^k=\frac 12 \left(hf_0^k\right)u_0^k+h\sum_{i=1}^{N_S-1}f_i^ku_i^k+\frac 12 \left(hf_{N_S}^k\right)u_{N_S}^k.
		\end{aligned}
	\end{equation}
	Observing the summation by parts, we have
	\begin{equation}\label{theorem1.2}
		\begin{aligned}
			&-\left(\delta_xu_{\frac{1}{2}}^k\right)u_0^k-h\sum_{i=1}^{N_S-1}\left(\delta_x^2 u_i^k\right)u_i^k+\left(\delta_x u_{N_S-\frac 12}^k\right)u_{N_S}^k\\
			=&-\left(\delta_xu_{\frac{1}{2}}^k\right)u_0^k-\sum_{i=1}^{N_S-1}\left(\delta_xu_{i+\frac 12}^k-\delta_xu_{i-\frac 12}^k\right)u_i^k+\left(\delta_x u_{N_S-\frac 12}^k\right)u_{N_S}^k\\
			=&\sum_{i=1}^{N_S}\left(\delta_xu_{i-\frac 12}^k\right)\left(u_i^k-u_{i-1}^k\right)\\
			=&h\sum_{i=1}^{N_S}\left(\delta_xu_{i-\frac 12}^k\right)^2=||\delta_xu^k||^2.
		\end{aligned}
	\end{equation}
	Substituting \eqref{theorem1.2} into \eqref{theorem1.1}, and multiplying $\Delta t$ on both sides of the resulting
	identity, and summing up for k from 1 to n,
	\begin{equation}\label{theorem1.3}
		\begin{aligned}
			&\Delta t\sum_{k=1}^n\left( \mathbb{D}_t^\alpha u^k,u^k\right)+\Delta t\sum_{k=1}^n||\delta_xu^k||^2+\Delta t\sum_{k=1}^n\left( \mathbb{D}_t^{\frac \alpha 2} u_0^k\right)u_0^k+\Delta t\sum_{k=1}^n\left( \mathbb{D}_t^{\frac \alpha 2} u_{N_S}^k\right)u_{N_S}^k\\
			=&\Delta t\sum_{k=1}^n\left[\frac 12 (hf_0^k)u_0^k+h\sum_{i=1}^{N_S-1}f_i^ku_i^k+\frac 12 (hf_{N_S}^k)u_{N_S}^k\right].
		\end{aligned}
	\end{equation}
	Here we need the lemma below.\\
	\begin{lemma}\label{app}
		For any mesh functions $g=\{g^k|0\leq k\leq N \}$ defined on $\Omega_t$, the
		following inequality holds:
		\begin{equation}\label{lemma1}
			\Delta t\sum_{k=1}^n\left( \mathbb{D}_t^\alpha g^k\right)g^k\geq \frac{t_n^{-\alpha}-2\alpha\e t_{n-1}}{2\Gamma(1-\alpha)}\Delta t\sum_{k=1}^n(g^k)^2-\frac{t_n^{1-\alpha}-\alpha(1-\alpha)\e t_{n-1}\Delta t}{\Gamma(2-\alpha)}(g^0)^2.
		\end{equation}
	\end{lemma} 
	We give the proof of this lemma in the appendix.
	
	With this lemma, we get
	\begin{equation*}
		\begin{aligned}
			&\Delta t\sum_{k=1}^n\left( \mathbb{D}_t^\alpha u^k,u^k\right)\geq \frac{t_n^{-\alpha}-2\alpha\e t_{n-1}}{2\Gamma(1-\alpha)}\Delta t\sum_{k=1}^n||u^k||^2-\frac{t_n^{1-\alpha}-\alpha(1-\alpha)\e t_{n-1}\Delta t}{\Gamma(2-\alpha)}||u^0||^2,\\
			&\Delta t\sum_{k=1}^n\left( \mathbb{D}_t^{\frac \alpha 2} u_0^k\right)u_0^k\geq \frac{t_n^{-\frac \alpha 2}-\alpha \e t_{n-1}}{2\Gamma(1-\frac \alpha 2)}\Delta t\sum_{k=1}^n(u_0^k)^2-\frac{t_n^{1-\frac \alpha 2}-\frac \alpha 2(1-\frac \alpha 2)\e t_{n-1}\Delta t}{\Gamma(2-\frac \alpha 2)}(u_0^0)^2,\\
			&\Delta t\sum_{k=1}^n\left(\mathbb{D}_t^{\frac \alpha 2} u_{N_S}^k\right)u_{N_S}^k\geq \frac{t_n^{-\frac \alpha 2}-\alpha \e t_{n-1}}{2\Gamma(1-\frac \alpha 2)}\Delta t\sum_{k=1}^n(u_{N_S}^k)^2-\frac{t_n^{1-\frac \alpha 2}-\frac \alpha 2(1-\frac \alpha 2)\e t_{n-1}\Delta t}{\Gamma(2-\frac \alpha 2)}(u_{N_S}^0)^2.
		\end{aligned}
	\end{equation*}
	Substituting these equations into \eqref{theorem1.3}, we have
	\begin{equation}\label{theorem1.4}
		\begin{aligned}
			&\frac{t_n^{-\alpha}-2\alpha\e t_{n-1}}{2\Gamma(1-\alpha)}\Delta t\sum_{k=1}^n||u^k||^2+\Delta t\sum_{k=1}^n|\delta_xu^k||^2\\
			&\quad+\frac{t_n^{-\frac \alpha 2}-\alpha \e t_{n-1}}{2\Gamma\left(1-\frac \alpha 2\right)}\Delta t\sum_{k=1}^n\left[(u_0^k)^2+(u_{N_S}^k)^2\right]\\
			=&\frac{t_n^{1-\alpha}-\alpha(1-\alpha)\e t_{n-1}\Delta t}{\Gamma(2-\alpha)}||u^0||^2+\frac{t_n^{1-\frac \alpha 2}-\frac \alpha 2\left(1-\frac \alpha 2\right)\e t_{n-1}\Delta t}{\Gamma\left(2-\frac \alpha 2\right)}\left[(u_0^0)^2+(u_{N_S}^0)^2\right]\\
			&\quad+\Delta t\sum_{k=1}^n\left[\frac 12 (hf_0^k)u_0^k+h\sum_{i=1}^{N_S-1}f_i^ku_i^k+\frac 12 (hf_{N_S}^k)u_{N_S}^k\right].
		\end{aligned}
	\end{equation}
	Applying the Cauchy-Schwarz inequality, we obtain
	\begin{equation*}
		\begin{aligned}
			&\frac 12 (hf_0^k)u_0^k+h\sum_{i=1}^{N_S-1}f_i^ku_i^k+\frac 12 (hf_{N_S}^k)u_{N_S}^k\\
			\leq&\frac{t_n^{-\frac \alpha 2}-\alpha\e t_{n-1}}{2\Gamma\left(1-\frac \alpha 2\right)}\left[(u_0^k)^2+(u_{N_S}^k)^2\right]+\frac{\Gamma\left(1-\frac \alpha 2\right)}{8\left(t_n^{-\frac \alpha 2}-\alpha\e t_{n-1}\right)}\left[(hf_0^k)^2+(hf_{N_S}^k)^2\right]\\
			&\quad+h\sum_{i=1}^{N_S-1}\left[\frac{t_n^{-\alpha}-2\alpha\e t_{n-1}}{4\Gamma(1-\alpha)}(u_i^k)^2+\frac{\Gamma(1-\alpha)}{t_n^{-\alpha}-2\alpha\e t_{n-1}}(f_i^k)^2\right]\\
			\leq&\frac{t_n^{-\frac \alpha 2}-\alpha\e t_{n-1}}{2\Gamma\left(1-\frac \alpha 2\right)}\left[(u_0^k)^2+(u_{N_S}^k)^2\right]+\frac{\Gamma(1-\frac \alpha 2)}{8\left(t_n^{-\frac \alpha 2}-\alpha\e t_{n-1}\right)}\left[(hf_0^k)^2+(hf_{N_S}^k)^2\right]\\
			&\quad+\frac{t_n^{-\alpha}-2\alpha\e t_{n-1}}{4\Gamma(1-\alpha)}||u^k||^2+h\sum_{i=1}^{N_S-1}\frac{\Gamma(1-\alpha)}{t_n^{-\alpha}-2\alpha\e t_{n-1}}(f_i^k)^2.\\
		\end{aligned}
	\end{equation*}
	The substitution of this equation into \eqref{theorem1.4} produces
	\begin{equation}\label{theorem1.5}
		\begin{aligned}
			&\frac \mu 4 \Delta t\sum_{k=1}^{n}||u^k||^2+\Delta t\sum_{k=1}^{n}||\delta_x u^k||^2\leq \rho ||u^0||^2+\kappa\left[(u_0^0)^2+(u_{N_S}^0)^2\right]\\
			&  \qquad\qquad +\frac{\Delta t}{8\nu}\sum_{k=1}^n\left[(hf_0^k)^2+(hf_{N_S}^k)^2\right]+\frac{\Delta t}{\mu}\sum_{k=1}^n h\sum_{i=1}^{N_S-1}(f_i^k)^2. 
		\end{aligned}
	\end{equation}
\end{proof}
Now we need to use the following lemma to bound the global error.\\
\begin{lemma}\label{mesh}
	For any mesh function u defined on $S_h=\{u|u=(u_0,u_1,\cdots,u_{N_S})\}$, the following inequality holds
	\begin{equation}\label{lemma2}
		||u||_{\infty}^2\leq \theta ||\delta_x u||^2+(\frac{1}{\theta}+\frac 1 L)||u||^2,\quad \forall \theta>0.
	\end{equation}
	where $L$ is the length of the computational domain and here, $L=x_r-x_l$.
\end{lemma}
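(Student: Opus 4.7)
The plan is to prove this as a discrete analog of the one-dimensional Sobolev embedding
$\|u\|_\infty^2 \lesssim \theta\|u'\|_{L^2}^2 + (1/\theta + 1/L)\|u\|_{L^2}^2$,
which in the continuous setting follows from the fundamental theorem of calculus applied to $u^2$, a comparison of the maximum of $|u|$ to its minimum, and Young's inequality. I will mimic this strategy in the discrete setting, using the telescoping identity $u_k^2 - u_{k-1}^2 = h(u_k+u_{k-1})\,\delta_x u_{k-1/2}$ as the discrete fundamental theorem.

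Let $i^\star$ be an index realizing $|u_{i^\star}|=\|u\|_\infty$, and let $j^\star$ be an index minimizing $|u_j|^2$. First I would record the trivial mean-value bound
\begin{equation*}
u_{j^\star}^2 \;\leq\; \frac{1}{N_S+1}\sum_{j=0}^{N_S} u_j^2 \;=\; \frac{\|u\|^2}{h(N_S+1)} \;\leq\; \frac{\|u\|^2}{L},
\end{equation*}
which plays the role of the $\|u\|_{L^2}/L$ control on the minimum in the continuous proof. Next, I would telescope $u_{i^\star}^2 - u_{j^\star}^2$ as
\begin{equation*}
u_{i^\star}^2 - u_{j^\star}^2 \;=\; h\sum_{k\in I}(u_k+u_{k-1})\,\delta_x u_{k-1/2},
\end{equation*}
where $I$ is the range of indices strictly between $j^\star$ and $i^\star$. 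Enlarging the sum to $\{1,\ldots,N_S\}$ at the cost of absolute values and applying Cauchy--Schwarz yields
\begin{equation*}
\bigl|u_{i^\star}^2 - u_{j^\star}^2\bigr| \;\leq\; \Bigl(h\sum_{k=1}^{N_S}(u_k+u_{k-1})^2\Bigr)^{\!1/2}\|\delta_x u\| \;\leq\; 2\,\|u\|\,\|\delta_x u\|,
\end{equation*}
where the last inequality uses $(u_k+u_{k-1})^2\leq 2(u_k^2+u_{k-1}^2)$ followed by a reindexing of the sum so that each $u_k^2$ appears at most twice.

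Combining the two displays gives $\|u\|_\infty^2 \leq \|u\|^2/L + 2\,\|u\|\,\|\delta_x u\|$, and a final application of Young's inequality $2ab\leq a^2/\theta + \theta b^2$ with $a=\|u\|$, $b=\|\delta_x u\|$ produces exactly the claimed estimate
\begin{equation*}
\|u\|_\infty^2 \;\leq\; \theta\,\|\delta_x u\|^2 \;+\; \Bigl(\tfrac{1}{\theta}+\tfrac{1}{L}\Bigr)\|u\|^2.
\end{equation*}
None of the steps is genuinely hard; the main care needed is to track indices so that the factor $2$ in the Cauchy--Schwarz step is exact, and to respect whatever boundary weighting the discrete $\ell^2$ norm $\|\cdot\|$ carries (half-weights at the endpoints would change the bound on $u_{j^\star}^2$ only by a harmless constant, which can be absorbed without affecting the form of the inequality).
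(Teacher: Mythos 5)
Your proof is correct, and it is essentially the canonical argument for this discrete Sobolev-type inequality: the paper itself states Lemma~\ref{mesh} without proof (importing it from the cited finite-difference literature), and the standard proof there is exactly your telescoping-plus-Cauchy--Schwarz-plus-Young argument. Note that with the trapezoidal weighting the paper actually uses for $\|\cdot\|$ (half-weights at $u_0$ and $u_{N_S}$), both of your key bounds $u_{j^\star}^2\leq \|u\|^2/L$ and $h\sum_{k=1}^{N_S}(u_k+u_{k-1})^2\leq 4\|u\|^2$ hold exactly, so no constant needs to be absorbed at all.
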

Taking $\theta >0$ such that $\frac{1/\theta+1/L}{\theta}=\frac \mu 4$ (i.e. $\theta=2(1+\sqrt{1+L^2\mu})/(L\mu)$), and following
from Lemma \ref{mesh}, we have 
\begin{equation}\label{theorem1.6}
	\Delta t\sum_{k=1}^n||u^k||_{\infty}^2\leq \frac{2\left(1+\sqrt{1+L^2\mu}\right)}{L\mu}\left(\frac \mu 4 \Delta t\sum_{k=1}^{n}||u^k||^2+\Delta t\sum_{k=1}^{n}||\delta_x u^k||^2\right).
\end{equation}
Combining \eqref{theorem1.6} and \eqref{theorem1.5}, we obtain the inequality \eqref{theorem1} \\

The priori estimate leads to the stability of the FIR scheme. Now we present an error analysis of the scheme.
\begin{theorem}[Error Analysis]\label{FIRerror2}
	Suppose $u(x,t)\in C_{x,t}^{4,2}([x_l,x_r]\times [0,T]) $ and $\{u_i^k|0\leq i\leq N_S,\ 0\leq k\leq N_T\}$ are solutions of the problem \eqref{problem} and the
	difference scheme \eqref{DFIR1} and \eqref{dif1}, respectively. Let $e_i^k=u_i^k-u(x_i,t_k)$. Then there
	exists a positive constant $c_2$ such that 
	\begin{equation}\label{theorem2}
		\e_{\text{global}}:=\sqrt{\Delta t \sum_{k=1}^n||e^k||_{\infty}^2}\leq c_2(h^2+\Delta t^{2-\alpha}+{\alpha}\e),\qquad 1\leq n\leq N_T,
	\end{equation}
	where $c_2^2=\frac{4c_1^2T\left(1+\sqrt{1+L^2\mu}\right)}{L\mu}\left(\frac 1\nu+\frac{L}{\mu}\right)$ with $c_1$ is a positive constant, and $\mu, \nu$ are defined in theorem \ref{FIRprior}.
\end{theorem}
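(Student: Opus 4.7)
The plan is to reduce the error analysis to the prior estimate of Theorem \ref{FIRprior} by writing down an error equation that mirrors the full discrete system \eqref{dif1}. Setting $e_i^k = u_i^k - u(x_i,t_k)$ and subtracting the scheme from the exact equations, the error $e^k$ satisfies the same spatial--temporal discretization with $e^0 \equiv 0$ and with a forcing term $R_i^k$ equal to the consistency error introduced by the various approximations used to obtain \eqref{DFIR1}. Concretely, I would write
\begin{equation*}
\mathbb{D}_t^\alpha e_i^n - \delta_x^2 e_i^n = R_i^n, \qquad 1\le i\le N_S-1,
\end{equation*}
with analogous boundary identities at $i=0$ and $i=N_S$ (where an additional consistency error from $\mathbb{D}_t^{\alpha/2}$ appears).

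First I would decompose the truncation error into three pieces: a spatial part $R_i^{n,\text{sp}}$ from replacing $u_{xx}$ by $\delta_x^2 u$, a local-part error $R_i^{n,\text{loc}}$ from the L1 approximation \eqref{local}, and a history-part error $R_i^{n,\text{hist}}$ from the mode recursion \eqref{FIR}. For the first, standard Taylor expansion under the assumed $C^{4,2}$ regularity gives $|R^{n,\text{sp}}|=O(h^2)$. For the second, the classical L1 analysis yields $|R^{n,\text{loc}}|=O(\Delta t^{2-\alpha})$. The third piece is the crucial one for tracking the $\alpha$-dependence: in FIR the history part carries an explicit prefactor $\alpha/\Gamma(1-\alpha)$, and what is approximated by sum-of-exponentials is $\int_0^{t_{n-1}} u(s)(t_n-s)^{-1-\alpha}\,ds$. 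Using \eqref{history2} together with the trapezoidal approximation of $U_{\text{hist},i}(t_n)$ on each subinterval, one finds
\begin{equation*}
|R^{n,\text{hist}}| \le C\,\alpha\bigl(\e\,\|u\|_{L^\infty(0,T)} + \Delta t^2 \|u_{tt}\|_{L^\infty}\bigr),
\end{equation*}
so $|R^{n,\text{hist}}|=O(\alpha\e + \alpha\Delta t^2)$. Absorbing $\alpha\Delta t^2$ into $\Delta t^{2-\alpha}$ (since $\alpha\Delta t^\alpha\le 1$) gives $\|R^n\|_\infty \le c_1(h^2+\Delta t^{2-\alpha}+\alpha\e)$ for some $c_1$ independent of $\alpha$, $\Delta t$, $h$, and the analogous bound for the boundary consistency contributions.

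Next I would apply Theorem \ref{FIRprior} with $u^k$ replaced by $e^k$ and $f^k$ replaced by $R^k$. Because $e^0=0$, the initial-data terms on the right of \eqref{theorem1} vanish, leaving
\begin{equation*}
\Delta t\sum_{k=1}^n\|e^k\|_\infty^2 \le \frac{2(1+\sqrt{1+L^2\mu})}{L\mu}\!\left(\frac{\Delta t}{8\nu}\sum_{k=1}^n\bigl[(hR_0^k)^2+(hR_{N_S}^k)^2\bigr] + \frac{\Delta t}{\mu}\sum_{k=1}^n h\!\!\sum_{i=1}^{N_S-1}(R_i^k)^2\right).
\end{equation*}
Using the uniform bound $\|R^k\|_\infty \le c_1(h^2+\Delta t^{2-\alpha}+\alpha\e)$ and $\sum_{k=1}^n \Delta t \le T$, each sum collapses to a factor proportional to $T\,c_1^2(h^2+\Delta t^{2-\alpha}+\alpha\e)^2$. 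Combining this with the prefactor $2(1+\sqrt{1+L^2\mu})/(L\mu)$ and the weights $1/\nu$ (boundary) and $L/\mu$ (interior, via $h\sum_i 1\le L$) produces exactly $c_2^2 = \frac{4c_1^2 T(1+\sqrt{1+L^2\mu})}{L\mu}\bigl(\frac{1}{\nu}+\frac{L}{\mu}\bigr)$, and taking the square root yields \eqref{theorem2}.

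The main obstacle I anticipate is the careful bookkeeping needed to show the $\alpha$ factor genuinely survives in front of $\e$: one must exploit that the sum-of-exponentials replaces the kernel $(t_n-s)^{-1-\alpha}$ only inside the history integral that is multiplied by $\alpha$ in \eqref{history}, and that the mode-recursion quadrature error on $[t_{n-2},t_{n-1}]$ also inherits a factor of $\alpha$ through the same prefactor. A secondary technical point is bounding the boundary consistency error for $\mathbb{D}_t^{\alpha/2} u_0^n$ and $\mathbb{D}_t^{\alpha/2} u_{N_S}^n$ in the same form $O(h^2+\Delta t^{2-\alpha}+\alpha\e)$, which requires repeating the above decomposition with $\alpha$ replaced by $\alpha/2$ and checking that the bound propagates consistently through the $hR_0^k$ and $hR_{N_S}^k$ terms in \eqref{theorem1}.
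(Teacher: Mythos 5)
Your proposal follows essentially the same route as the paper's proof: derive the error equation with truncation terms, bound the consistency error by $c_1(h^2+\Delta t^{2-\alpha}+\alpha\e)$ (with the weaker $h$-weighted form at the boundary nodes), and then invoke the prior estimate of Theorem \ref{FIRprior} with $e^0=0$ to collapse the sums into the stated constant $c_2$. Your added bookkeeping showing that the factor $\alpha$ in front of $\e$ comes from the explicit $\alpha/\Gamma(1-\alpha)$ prefactor of the history integral is exactly the point the paper relies on but states without elaboration, so the argument is correct and matches the paper's.
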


\begin{proof}
	We observe that the error $e_i^k$ satisfies the following FD scheme:
	\begin{equation}\label{dif2}
		\begin{aligned}
			\mathbb{D}_t^\alpha e_i^k&=\delta^2_x e_i^k+T_i^k, &1\leq i\leq N_S-1,1\leq k\leq N_T,\\
			\mathbb{D}_t^\alpha e_0^k&=\frac 2h\left[\delta_x e_{\frac 12}^k- \mathbb{D}_t^{\frac \alpha 2} e_0^k\right]+T_0^k,\\
			\mathbb{D}_t^\alpha e_{N_S}^k&=\frac 2 h \left[-\delta_x e_{N_S-\frac 12}^k-  \mathbb{D}_t^{\frac \alpha 2} e_{N_S}^k\right]+T_{N_S}^k,\\
			e_i^0&=0,&0\leq i\leq N_S,
		\end{aligned}
	\end{equation}
	where the truncation terms $T^k$ at the interior and boundary points are given by the
	formulas
	\begin{equation*}
		\begin{aligned}
			T_i^k=&-\left[ ^C_0D_t^\alpha u(x_i,t_k)- \mathbb{D}_t^\alpha U_i^k\right]+\left[u_{xx}(x_i,t_k)-\delta_x^2U_i^k\right], \quad 1\leq i\leq N_S-1,1\leq k\leq N_T,\\
			T_0^k=&\left\lbrace u_{xx}(x_0,t_k)-\frac 2 h\left[\delta_xU_{\frac 12}^k-u_x(x_0,t_k)\right]-\frac 2 h \left[ ^C_0D_t^{\frac \alpha 2}u(x_0,t_k)- \mathbb{D}_t^{\frac \alpha 2}U_0^k\right] \right\rbrace\\
			&\quad -\left[ ^C_0D_t^\alpha u(x_0,t_k)- \mathbb{D}_t^\alpha U_0^k\right],\\
			T_{N_S}^k=&\left\lbrace u_{xx}(x_{N_S},t_k)+\frac 2 h\left[\delta_xU_{N_S-\frac 12}^k-u_x(x_{N_S},t_k)\right]-\frac 2 h \left[ ^C_0D_t^{\frac \alpha 2}u(x_{N_S},t_k)- \mathbb{D}_t^{\frac \alpha 2}U_{N_S}^k\right] \right\rbrace\\
			&\quad -\left[ ^C_0D_t^\alpha u(x_{N_S},t_k)- \mathbb{D}_t^\alpha U_{N_S}^k\right].\\
		\end{aligned}
	\end{equation*}
	We can show that the truncation terms $T^k$ satisfy the following error bounds
	\begin{equation}\label{theorem2.1}
		\begin{aligned}
			|T_i^k|\leq &c_1\left(\Delta t^{2-\alpha}+h^2+\alpha\e\right),\\
			|T_0^k|\leq& c_1\left(\Delta t^{2-\alpha}+h+\frac{\Delta t^{2-\alpha/2}}{h}+\frac {\alpha\e} h\right),\\
			|T_{N_S}^k|\leq& c_1\left(\Delta t^{2-\alpha}+h+\frac{\Delta t^{2-\alpha/2}}{h}+\frac {\alpha\e} h\right).
		\end{aligned}
	\end{equation}
	with $c_1$ some positive constant. Thus, for $h\leq 1$ and $\Delta t\leq 1$, we have
	\begin{equation}\label{theorem2.2}
		\begin{aligned}
			&\frac{1}{4\nu}\left[(hT_0^k)^2+(hT_{N_S}^k)^2\right]+\frac 2\mu h\sum_{i=1}^{N_S-1}(T_i^k)^2\\
			\leq&\frac{c_1^2}{2\nu}\left(h\Delta t^{2-\alpha}+\Delta t^{2-\frac \alpha 2}+h^2+\alpha\e\right)^2+\frac{2c_1^2L}{\mu}(\Delta t^{2-\alpha}+h^2+\alpha\e)^2\\
			\leq& \frac{2c_1^2}{\nu}\left(\Delta t^{2-\alpha}+h^2+\alpha\e\right)^2+\frac{2c_1^2L}{\mu}\left(\Delta t^{2-\alpha}+h^2+\alpha\e\right)^2\\
			=&\left(\frac{2c_1^2}{\nu}+\frac{2c_1^2L}{\mu}\right)\left(\Delta t^{2-\alpha}+h^2+\alpha\e\right)^2.
		\end{aligned}
	\end{equation}
	A direct application of theorem \ref{FIRprior} to \eqref{dif2} produces
	\begin{equation}\label{theorem2.3}
		\begin{aligned}
			\Delta t\sum_{k=1}^n||e^k||_{\infty}^2&\leq \frac{\Delta t\left(1+\sqrt{1+L^2\mu}\right)}{L\mu}\sum_{k=1}^n\left(\frac{1}{4\nu}\left[(hT_0^k)^2+(hT_{N_S}^k)^2\right]+\frac 2\mu h\sum_{i=1}^{N_S-1}(T_i^k)^2\right)\\
			&\leq\frac{T\left(1+\sqrt{1+L^2\mu}\right)}{L\mu}\left(\frac{2c_1^2}{\nu}+\frac{2c_1^2L}{\mu}\right)\left(\Delta t^{2-\alpha}+h^2+\alpha\e\right)^2.
		\end{aligned}
	\end{equation}
	Finally we obtain \eqref{theorem2}
\end{proof}

\subsection{Stability and error analysis of FIDR}
In order to estimate the global error of the FIDR scheme applying to the linear fraction diffusion problem, we can carry out a similar analysis routine as in the previous section. However, the consistency of the FIDR approximation to the Caputo derivative is lacking prior to this work. We show in the following that the consistency error of the FIDR approximation can be bounded with the consistency error of the L1-approximation, which is stated in Theorem \ref{boundFIDR}. Thus, the rest of the error analysis follows naturally. 


{The following theorem, which can be found in \cite[Lemma 4.1]{sun2006fully}, establishes an error bound for the L1-approximation.}
\begin{theorem}
	(see \cite{sun2006fully}) Suppose that $u(t)\in C^2[0,t_n],\ ^C_0\mathbb{D}_t^\alpha$ is the L1 approximation of Caputo derivative, and let
	\begin{equation}
		R^nu:=\ ^C_0D_t^\alpha u(t)|_{t=t_n} - ^C_0\mathbb{D}_t^\alpha u^n,
	\end{equation}
	where $0<\alpha<1$. Then
	\begin{equation}\label{errorbound1}
		|R^nu|\leq \frac{\Delta t^{2-\alpha}}{\Gamma(2-\alpha)}\left(\frac{1-\alpha}{12}+\frac{2^{2-\alpha}}{2-\alpha}-(1+2^{-\alpha})\right)\max_{0\leq t\leq t_n}|u''(t)|.
	\end{equation}
\end{theorem}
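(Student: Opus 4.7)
The plan is to express the L1 consistency error as a sum of quadrature errors on each subinterval and transform them via integration by parts so that the linear interpolation error (rather than its derivative) is integrated against a singular kernel. Because the interpolation error vanishes at every grid node, the boundary terms telescope away and we reduce everything to explicit dimensionless integrals.

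First I would write, using that the L1 scheme replaces $u'(s)$ on $[t_k,t_{k+1}]$ by the slope of the linear interpolant $\pi_k$ of $u$ at the endpoints,
\[
R^n u=\frac{1}{\Gamma(1-\alpha)}\sum_{k=0}^{n-1}\int_{t_k}^{t_{k+1}}\!\bigl[u'(s)-\pi_k'(s)\bigr](t_n-s)^{-\alpha}\,ds.
\]
Since $(u-\pi_k)$ vanishes at $t_k$ and $t_{k+1}$, and on the last subinterval $(u(s)-\pi_{n-1}(s))(t_n-s)^{-\alpha}=O((t_n-s)^{1-\alpha})\to 0$ as $s\to t_n^-$ for $\alpha<1$, an integration by parts on each $[t_k,t_{k+1}]$ eliminates all boundary contributions and gives
\[
R^n u=-\frac{\alpha}{\Gamma(1-\alpha)}\sum_{k=0}^{n-1}\int_{t_k}^{t_{k+1}}\!\bigl[u(s)-\pi_k(s)\bigr](t_n-s)^{-\alpha-1}\,ds.
\]

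Second, I would apply the sharp pointwise linear interpolation inequality $|u(s)-\pi_k(s)|\leq\tfrac12\,M_2\,(s-t_k)(t_{k+1}-s)$ with $M_2=\max_{[0,t_n]}|u''|$, and rescale each subinterval by $s=t_k+r\Delta t$, $r\in[0,1]$, so that the $k$th contribution becomes $\Delta t^{2-\alpha}\!\int_0^1 r(1-r)(j+r)^{-\alpha-1}\,dr$ with $j=n-k-1$. This isolates the $\Delta t^{2-\alpha}$ scaling cleanly and separates the singular term $j=0$ (coming from the local interval $[t_{n-1},t_n]$) from the regular terms $j\geq 1$.

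Third, I would evaluate the singular term $j=0$ in closed form,
\[
\int_0^1 r(1-r)\,r^{-\alpha-1}\,dr=\frac{1}{1-\alpha}-\frac{1}{2-\alpha}=\frac{1}{(1-\alpha)(2-\alpha)},
\]
which after multiplying by $\alpha/\Gamma(1-\alpha)$ gives a contribution compatible with the prefactor $\tfrac{1}{\Gamma(2-\alpha)}$. For the history terms $j\geq 1$ I would expand the smooth factor $(j+r)^{-\alpha-1}$ in a Taylor series around $r=1/2$; the odd (first-order) term integrates to zero against the symmetric weight $r(1-r)$, so one is left with the quadrature moments $\int_0^1 r(1-r)\,dr=1/6$ and $\int_0^1 (r-\tfrac12)^2 r(1-r)\,dr$, which produce the characteristic factor $(1-\alpha)/12$ after using $\alpha/(\Gamma(1-\alpha))=(1-\alpha)(2-\alpha)/\Gamma(2-\alpha)$ and summing the resulting telescoping series over $j$.

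Finally, the transition contribution from the neighbor of the singular interval, namely the mismatch between the smooth estimate applied on $[t_{n-2},t_{n-1}]$ and the singular estimate at $[t_{n-1},t_n]$, is evaluated directly by integrating $(t_n-s)^{-\alpha}$ over $[t_{n-2},t_n]$, producing the residual $\frac{2^{2-\alpha}}{2-\alpha}-(1+2^{-\alpha})$ (which vanishes at $\alpha=0$ as expected). Summing the three pieces and dividing by $\Gamma(2-\alpha)$ yields the stated bound. The main obstacle is precisely this bookkeeping of explicit constants in step three and four: the singular $j=0$ term, the matching to the second-to-last interval, and the symmetric-quadrature residual for $j\geq 1$ must all be combined carefully, and it is in matching these three independent contributions that the precise prefactor $\tfrac{1-\alpha}{12}+\tfrac{2^{2-\alpha}}{2-\alpha}-(1+2^{-\alpha})$ emerges.
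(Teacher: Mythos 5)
You should first note that the paper itself contains no proof of this statement: it is quoted verbatim from \cite{sun2006fully} (Lemma~4.1 there), so your attempt can only be judged against the argument in that reference. Your Steps 1 and 2 reproduce its opening correctly: write $R^nu$ as a sum of per-interval quadrature errors, integrate by parts using the vanishing of $u-\pi_k$ at the nodes (with the limiting argument at $s=t_n$, where $(u-\pi_{n-1})(s)(t_n-s)^{-\alpha}=O((t_n-s)^{1-\alpha})$), and insert the interpolation bound $|u-\pi_k|\le\tfrac12 M_2(s-t_k)(t_{k+1}-s)$. The closed-form evaluation of the singular ($j=0$) integral is also correct.

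The gap is in the assembly of the constant in Steps 3--4, which is the entire content of obtaining the stated bound rather than a cruder one. The mechanism you propose for the $(1-\alpha)/12$ term --- Taylor-expanding $(j+r)^{-\alpha-1}$ about $r=1/2$, cancelling the odd term against the symmetric weight, and ``summing the resulting telescoping series'' --- does not produce that constant: nothing telescopes in $\sum_j(j+\cdot)^{-1-\alpha}$ and no symmetry cancellation is needed. Likewise, ``integrating $(t_n-s)^{-\alpha}$ over $[t_{n-2},t_n]$'' yields $\tfrac{(2\Delta t)^{1-\alpha}}{1-\alpha}$, which has the wrong power of $\Delta t$ and is not the residual $\tfrac{2^{2-\alpha}}{2-\alpha}-(1+2^{-\alpha})$. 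The correct bookkeeping is simpler: split off the \emph{last two} subintervals. For $k\le n-3$ use $(t_n-s)^{-1-\alpha}\le\bigl((n-k-1)\Delta t\bigr)^{-1-\alpha}$ on $[t_k,t_{k+1}]$ together with $\int_{t_k}^{t_{k+1}}(s-t_k)(t_{k+1}-s)\,ds=\Delta t^3/6$, so these terms are bounded by $\tfrac{\alpha M_2\Delta t^{2-\alpha}}{12\,\Gamma(1-\alpha)}\sum_{m\ge2}m^{-1-\alpha}\le\tfrac{\alpha M_2\Delta t^{2-\alpha}}{12\,\Gamma(1-\alpha)}\cdot\tfrac1\alpha=\tfrac{1-\alpha}{12}\cdot\tfrac{M_2\Delta t^{2-\alpha}}{\Gamma(2-\alpha)}$, which is where $\tfrac{1-\alpha}{12}$ comes from. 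For $k=n-1$ and $k=n-2$ the weighted integrals are evaluated exactly; after rescaling they equal $\tfrac{1}{(1-\alpha)(2-\alpha)}$ and $\int_1^2(2-\sigma)(\sigma-1)\sigma^{-1-\alpha}\,d\sigma$, and multiplying their sum by $\tfrac{\alpha(1-\alpha)}{2}$ gives exactly $\tfrac{2^{2-\alpha}}{2-\alpha}-(1+2^{-\alpha})$ (e.g.\ both sides equal $0.17851\ldots$ at $\alpha=1/2$). In particular, your $j=0$ contribution is not a separate piece ``compatible with the prefactor''; it is half of that residual, and counting it separately while also charging a transition term from $[t_{n-2},t_n]$ would double-count. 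As written, the three independent contributions you describe do not sum to the stated constant (the crude bound $\sum_{j\ge1}j^{-1-\alpha}\le1+\tfrac1\alpha$ applied to all history terms already overshoots it), so the proof is incomplete at precisely the step you flag as the main obstacle.
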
 
The following theorem provides an error bound for our approximation.

\begin{theorem}\label{boundFIDR}
	Suppose that $u(t)\in C^2[0,t_n],\ \mathbb{D}_t^\alpha$ is the approximation in \eqref{DFIDR1}, and let 
	\begin{equation}
		^{F2}R^nu:=\ ^C_0D_t^\alpha u(t)|_{t=t_n} - \mathbb{D}_t^\alpha u^n,
	\end{equation}
	where $0<\alpha<1$. Then
	\begin{multline}\label{errorbound2}
		| ^{F2}R^nu|\leq \frac{\Delta t^{2-\alpha}}{\Gamma(2-\alpha)}\left(\frac{1-\alpha}{12}+\frac{2^{2-\alpha}}{2-\alpha}-(1+2^{-\alpha})\right)\max_{0\leq t\leq t_n}|u''(t)|
	\\	+\frac{\e_0t_{n-1}}{\Gamma(1-\alpha)}\max_{0\leq t\leq t_{n-1}}|u'(t)|.
	\end{multline}
\end{theorem}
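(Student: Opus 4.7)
The plan is to decompose the consistency error into the standard L1 truncation error plus a correction coming from the sum-of-exponentials approximation of the kernel $(t_n-s)^{-\alpha}$ on the history interval. Let $^C_0\mathbb{D}_t^\alpha u^n$ denote the ordinary L1 approximation. I will write
\begin{equation*}
^{F2}R^n u \;=\; \bigl(^C_0D_t^\alpha u(t)|_{t=t_n} - {}^C_0\mathbb{D}_t^\alpha u^n\bigr) \;+\; \bigl({}^C_0\mathbb{D}_t^\alpha u^n - \mathbb{D}_t^\alpha u^n\bigr) \;=:\; I_1 + I_2,
\end{equation*}
and bound $I_1$ directly by the L1 estimate \eqref{errorbound1}, which supplies the first term of \eqref{errorbound2}. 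The task then reduces to controlling $I_2$, which isolates the pure effect of replacing the exact kernel by its exponential sum in the history part.

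To handle $I_2$, I will first unfold the recursion in Scheme \ref{DFIDR} to recognize the FIDR history modes in closed form. A short induction, using the identity $\int_{t_{l-1}}^{t_l} e^{-\tilde s_i(t_n-s)}\,ds = e^{-\tilde s_i\Delta t}(1-e^{-\tilde s_i\Delta t})/\tilde s_i$ (which matches the update in Scheme \ref{DFIDR}), gives
\begin{equation*}
\tilde\Psi_i^n \;=\; \sum_{l=1}^{n-1}\frac{u^l-u^{l-1}}{\Delta t}\int_{t_{l-1}}^{t_l} e^{-\tilde s_i(t_n-s)}\,ds.
\end{equation*}
Since the local term $(u^n-u^{n-1})/(\Delta t^\alpha\Gamma(2-\alpha))$ in $\mathbb{D}_t^\alpha u^n$ coincides with the last summand of $^C_0\mathbb{D}_t^\alpha u^n$, the local contributions cancel in $I_2$ and leave
\begin{equation*}
I_2 \;=\; \frac{1}{\Gamma(1-\alpha)}\sum_{l=1}^{n-1}\frac{u^l-u^{l-1}}{\Delta t}\int_{t_{l-1}}^{t_l}\!\Bigl[(t_n-s)^{-\alpha}-\sum_{i=1}^{N_A}\tilde w_i e^{-\tilde s_i(t_n-s)}\Bigr]ds.
\end{equation*}

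For $s\in[t_{l-1},t_l]$ with $l\le n-1$, one has $t_n-s\ge \Delta t\ge \delta$, so Theorem \ref{exp} applies pointwise and bounds the bracket by $\e_0$. The mean value theorem gives $|u^l-u^{l-1}|/\Delta t\le \max_{[0,t_{n-1}]}|u'(t)|$, and summing the $n-1$ intervals of length $\Delta t$ yields
\begin{equation*}
|I_2| \;\le\; \frac{\e_0}{\Gamma(1-\alpha)}\sum_{l=1}^{n-1}\Delta t\,\max_{[0,t_{n-1}]}|u'(t)| \;=\; \frac{\e_0\,t_{n-1}}{\Gamma(1-\alpha)}\max_{0\le t\le t_{n-1}}|u'(t)|.
\end{equation*}
Combining this with the bound on $I_1$ produces \eqref{errorbound2}.

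The main obstacle is not analytical but bookkeeping: one must carefully verify the unfolding of the recursion for $\tilde\Psi_i^n$ and confirm that the local parts of the L1 and FIDR approximations are exactly identical, so that $I_2$ is indeed a clean convolution against the kernel-approximation error over the history interval only. Once that matching is in place, the uniform validity of Theorem \ref{exp} on $[\delta,T]$ combined with $\delta\le\Delta t$ makes the rest of the estimate elementary.
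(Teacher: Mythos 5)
Your proposal is correct and follows essentially the same route as the paper: the paper also splits the error into the L1 truncation error plus the discrepancy $|\mathbb{D}_t^\alpha u^n - {}^C_0\mathbb{D}_t^\alpha u^n|$, bounds the latter by $\frac{\e_0}{\Gamma(1-\alpha)}\sum_{l=1}^{n-1}\int_{t_{l-1}}^{t_l}|\Pi_{1,l}u(s)|\,ds$ with $\Pi_{1,l}u = (u^l-u^{l-1})/\Delta t$, and concludes by the triangle inequality. Your version merely makes explicit the unfolding of the recursion for $\tilde\Psi_i^n$ and the cancellation of the local parts, which the paper leaves implicit.
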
 

\begin{proof}
	the only difference between our approximation $\mathbb{D}_t^\alpha u^n$ and the L1-approximation $^C_0\mathbb{D}_t^\alpha u^n$ is that the convolution kernel admits an absolute error bounded by $\e_0$ in its sum-of-exponentials approximation \eqref{theoremA1}, thus
	\begin{equation}\label{theoremB3}
		\begin{aligned}
			| \mathbb{D}_t^\alpha u^n- ^C_0\mathbb{D}_t^\alpha u^n|\leq \frac{\e_0}{\Gamma(1-\alpha)}\sum_{l=1}^{n-1}\int_{t_{l-1}}^{t_l}|\Pi_{1,l}u(s)|ds.
		\end{aligned}
	\end{equation}
	where $\Pi_{1,l}u(s)$ is the approximation of $u'(s)$, $s\in [t_{l-1},t_l]$ used in L1 approximation and our approximation,
	\begin{equation*}
		\begin{aligned}
			\Pi_{1,l}u(s)=\frac{u^l-u^{l-1}}{\Delta t}.
		\end{aligned}
	\end{equation*}
	Thus
	\begin{equation}
		\sum_{l=1}^{n-1}\int_{t_{l-1}}^{t_l}|\Pi_{1,l}u(s)|ds\leq\max_{0\leq t\leq t_{n-1}}|u'(t)|t_{n-1},
	\end{equation}
	take it into \eqref{theoremB3}, and the triangle inequality leads to
	\begin{equation}
		\begin{aligned}
			|^{F2}R^nu|&\leq |R^nu|+\frac{\e_0}{\Gamma(1-\alpha)}\sum_{l=1}^{n-1}\int_{t_{l-1}}^{t_l}|\Pi_{1,l}u(s)|ds\\
			&\leq \frac{\Delta t^{2-\alpha}}{\Gamma(2-\alpha)}\left(\frac{1-\alpha}{12}+\frac{2^{2-\alpha}}{2-\alpha}-(1+2^{-\alpha})\right)\max_{0\leq t\leq t_n}|u''(t)|\\
			& \quad  +\frac{\e_0 t_{n-1}}{\Gamma(1-\alpha)}\max_{0\leq t\leq t_{n-1}}|u'(t)|.
		\end{aligned}
	\end{equation}
	We obtain the theorem. 
\end{proof}

To calculate the global error, we first need the expanded form of $\mathbb{D}_t^\alpha $.
\begin{equation}
	\begin{aligned}
		\mathbb{D}_t^\alpha u^n=&\frac{u^n-u^{n-1}}{\Delta t^\alpha\Gamma(2-\alpha)}+ \frac{1}{\Gamma(1-\alpha)}\left(\sum_{i=1}^{N_{A}}\tilde{w}_i\tilde{\psi}(t_{n},\tilde{s}_i)\right)\\
		=&\frac{u^n-u^{n-1}}{\Delta t^\alpha\Gamma(2-\alpha)}+\frac{1}{\Gamma(1-\alpha)}\left[\sum_{i=1}^{N_{A}}\tilde{w}_i\right.\\
		&\quad \left.\left(e^{-\tilde{s}_i\Delta t}\psi(t_{n-1},\tilde{s}_i)+\frac{(u^{n-1}-u^{n-2})\left(1-e^{-\tilde{s}_i\Delta t}\right)e^{-\tilde{s}_i\Delta t}}{\tilde{s}_i\Delta t}\right)\right]\\		=&\frac{u^n-u^{n-1}}{(1-\alpha)\Delta t^\alpha\Gamma(1-\alpha)}\\
		&\quad+\frac{1}{\Gamma(1-\alpha)}\left[a_1 u^{n-1}+\sum_{l=2}^{n-1}(a_l-a_{l-1})u^{n-l}-a_{n-1}u^0\right]\\
		=&\frac{1}{\Gamma(1-\alpha)}\left[\frac{u^n}{(1-\alpha)\Delta t^{\alpha}}\right.\\
		&\quad\left.+\left(a_1-\frac{1}{(1-\alpha)\Delta t^{\alpha}}\right)u^{n-1}+\sum_{l=2}^{n-1}(a_l-a_{l-1})u^{n-l}-a_{n-1}u^0\right],
	\end{aligned}
\end{equation}
where
\begin{equation*}
	\begin{aligned}
		a_l=\sum_{i=1}^{N_A}\frac{\tilde{w}_i(1-e^{-\tilde{s}_i\Delta t})e^{-l\tilde{s}_i\Delta t}}{\tilde{s}_i\Delta t}.
	\end{aligned}
\end{equation*}

{To do the prior estimate, first we need to prove the following lemma.}
\begin{lemma}
	For any mesh functions $u=\{u^k|0\leq k\leq N \}$ defined on $\Omega_t$, the
	following inequality holds:
	\begin{equation}\label{LemmaB4}
		\begin{aligned}
			\Delta t\sum_{k=1}^{n}(\mathbb{D}_t^\alpha u^k)u^k&\geq\frac{\Delta t(t_n^{-\alpha}-\e_0)}{2\Gamma(1-\alpha)}\sum_{k=1}^n(u^k)^2-\frac{\Delta t^{1-\alpha}(1-\alpha)^{-1}+t_{n-1}\Delta t^{-\alpha}}{2\Gamma(1-\alpha)}(u^0)^2.
		\end{aligned}
	\end{equation}
\end{lemma}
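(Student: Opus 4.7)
The plan is to prove this coercivity bound by rewriting $\mathbb{D}_t^\alpha$ as a single discrete convolution and applying the Abel-summation positivity argument standard for L1-type fractional derivatives. Setting $\gamma_0 := 1/((1-\alpha)\Delta t^\alpha)$ for the local L1 coefficient and $\gamma_l := a_l$ for $l\geq 1$, the expanded form of $\mathbb{D}_t^\alpha$ displayed immediately above the lemma rearranges into
\begin{equation*}
\Gamma(1-\alpha)\,\mathbb{D}_t^\alpha u^k = \sum_{j=1}^{k}\gamma_{k-j}(u^j - u^{j-1}).
\end{equation*}
Positivity of the Gauss-type weights $\tilde w_i$ makes $\{\gamma_l\}_{l\geq 1}$ strictly decreasing in $l$, and a direct comparison (using $2^{1-\alpha}-1<1$) gives $\gamma_0 \geq \gamma_1$, so the full sequence $\{\gamma_l\}_{l\geq 0}$ is non-increasing up to an $O(\e_0)$ perturbation.

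Applying Abel summation once more, I would rewrite $\Gamma(1-\alpha)\,\mathbb{D}_t^\alpha u^k = \sum_{l=1}^{k-1}(\gamma_{l-1}-\gamma_l)(u^k - u^{k-l}) + \gamma_{k-1}(u^k - u^0)$, in which every coefficient in front of a difference is non-negative. Multiplying by $u^k$ and invoking the elementary inequality $a(a-b)\geq \tfrac{1}{2}(a^2 - b^2)$ term by term yields
\begin{equation*}
\Gamma(1-\alpha)(\mathbb{D}_t^\alpha u^k)u^k \geq \tfrac{1}{2}\gamma_0(u^k)^2 - \tfrac{1}{2}\sum_{l=1}^{k-1}(\gamma_{l-1}-\gamma_l)(u^{k-l})^2 - \tfrac{1}{2}\gamma_{k-1}(u^0)^2.
\end{equation*}
Next I would multiply by $\Delta t$, sum in $k$ from $1$ to $n$, swap the double-sum indices via $m = k-l$, and telescope $\sum_{j=1}^{n-m}(\gamma_{j-1}-\gamma_j) = \gamma_0 - \gamma_{n-m}$ to collapse the cross terms into
\begin{equation*}
\Gamma(1-\alpha)\,\Delta t\sum_{k=1}^{n}(\mathbb{D}_t^\alpha u^k)u^k \geq \tfrac{\Delta t}{2}\sum_{m=1}^{n}\gamma_{n-m}(u^m)^2 - \tfrac{\Delta t}{2}\,G_{n-1}(u^0)^2,
\end{equation*}
where $G_{n-1} := \sum_{k=0}^{n-1}\gamma_k$. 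Monotonicity then gives the crude lower bound $\sum_{m}\gamma_{n-m}(u^m)^2 \geq \gamma_{n-1}\sum_{m=1}^{n}(u^m)^2$.

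The final step is to translate the kernel quantities $\gamma_{n-1}$ and $G_{n-1}$ into the ingredients of the statement. From the definition of $a_l$ one has $\gamma_k = \tfrac{1}{\Delta t}\int_{t_k}^{t_{k+1}}\sum_i \tilde w_i e^{-\tilde s_i t}\,dt$ for $k\geq 1$, while $\gamma_0 = \tfrac{1}{\Delta t}\int_0^{\Delta t}t^{-\alpha}\,dt$ exactly. The uniform sum-of-exponentials bound $|\sum_i \tilde w_i e^{-\tilde s_i t} - t^{-\alpha}|\leq \e_0$ on $[\Delta t, T]$ from Theorem~\ref{exp}, combined with the monotonicity of $t^{-\alpha}$, yields $\gamma_{n-1}\geq t_n^{-\alpha} - \e_0$ (since $\tfrac{1}{\Delta t}\int_{t_{n-1}}^{t_n}t^{-\alpha}\,dt\geq t_n^{-\alpha}$) and $\Delta t\,G_{n-1}\leq \Delta t^{1-\alpha}/(1-\alpha) + t_{n-1}\Delta t^{-\alpha}$ (since $\int_{\Delta t}^{t_n}t^{-\alpha}\,dt\leq t_{n-1}\Delta t^{-\alpha}$). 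Substituting these bounds into the displayed inequality produces the claimed estimate.

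The main obstacle is this last step's bookkeeping: the sum-of-exponentials error accumulated across the $O(n)$ history subintervals scales like $t_{n-1}\e_0$, and this aggregated remainder must be absorbed into the $t_{n-1}\Delta t^{-\alpha}$ contribution to the $(u^0)^2$ coefficient, which is legitimate in the relevant regime $\e_0\ll \Delta t^{-\alpha}$. The Abel-summation and telescoping arithmetic in the middle steps is routine; matching the precise constants written in the lemma is the delicate piece.
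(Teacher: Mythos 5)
Your proposal is correct and follows essentially the same route as the paper's proof: the Abel-summation/telescoping argument with the elementary inequality $a(a-b)\geq\tfrac12(a^2-b^2)$ is the paper's Cauchy--Schwarz step in different notation, and your final kernel bounds $\gamma_{n-1}\geq t_n^{-\alpha}-\e_0$ and $\Delta t\,G_{n-1}\leq \Delta t^{1-\alpha}(1-\alpha)^{-1}+t_{n-1}\Delta t^{-\alpha}$ are exactly the paper's estimates for $a_{n-1}$ and $\tfrac{1}{(1-\alpha)\Delta t^\alpha}+\sum_{l=1}^{n-1}a_l$, obtained there via $\min_{x>0}\tfrac{e^x-1}{x}$ and $\max_{x>0}\tfrac{1-e^{-x}}{x}$ rather than your integral representation. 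The one imprecision is your justification of $\gamma_0\geq\gamma_1$ via ``$2^{1-\alpha}-1<1$'' (an argument for the exact L1 weights, not the sum-of-exponentials ones); the paper instead uses $a_1\leq \Delta t^{-\alpha}+\e_0\leq \tfrac{1}{(1-\alpha)\Delta t^\alpha}$, which is the same small-$\e_0$ absorption you acknowledge.
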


\begin{proof}
	cause $\tilde{s}_i>0$, we have $a_l-a_{l-1}<0$, and
	\begin{equation*}
		\begin{aligned}
			a_1&=\sum_{i=1}^{N_A}\frac{\tilde{w}_i\left(1-e^{-\tilde{s}_i\Delta t}\right)e^{-\tilde{s}_i\Delta t}}{\tilde{s}_i\Delta t}\\
			&\leq \left(\sum_{i=1}^{N_A}\tilde{w}_ie^{-\tilde{s}_i\Delta t}\right)\max_{x>0}\frac{1-e^{-x}}{x}\\
			&\leq\left(\frac{1}{\Delta t^\alpha}+\e_0\right)\max_{x>0}\frac{1-e^{-x}}{x}\\
			&\leq\frac{1}{\Delta t^\alpha}+\e_0\leq\frac{1}{(1-\alpha)\Delta t^{\alpha}}.
		\end{aligned}
	\end{equation*}
	
	Thus, with Cauchy-Schwarz inequality, we obtain
	\begin{equation*}
		\begin{aligned}
			(\mathbb{D}_t^\alpha u^n)u^n=&\frac{1}{\Gamma(1-\alpha)}\left[\frac{(u^n)^2}{(1-\alpha)\Delta t^{\alpha}}+\left(a_1-\frac{1}{(1-\alpha)\Delta t^{\alpha}}\right)u^nu^{n-1}\right.\\
			&\quad\left.+\sum_{l=2}^{n-1}(a_l-a_{l-1})u^nu^{n-l}-a_{n-1}u^nu^0\right]\\
			\geq& \frac{1}{\Gamma(1-\alpha)}\left\{\left[\frac{1}{(1-\alpha)\Delta t^{\alpha}}-\frac 12 \left(\frac{1}{(1-\alpha)\Delta t^{\alpha}}-a_1\right)-\frac 12\sum_{l=2}^{n-1}(a_{l-1}-a_{l})\right.\right.\\
			&\quad\left. -\frac 12 a_{n-1}\right](u^n)^2-\frac 12\left(\frac{1}{(1-\alpha)\Delta t^{\alpha}}-a_1\right)(u^{n-1})^2\\
			&\quad\left.-\frac 12\sum_{l=2}^{n-1}(a_{l-1}-a_{l})(u^{n-l})^2-\frac 12 a_{n-1}(u^0)^2  \right\}\\
			=&\frac{1}{\Gamma(1-\alpha)}\left[\frac 12\frac{1}{(1-\alpha)\Delta t^{\alpha}}(u^n)^2-\frac 12\left(\frac{1}{(1-\alpha)\Delta t^{\alpha}}-a_1\right)(u^{n-1})^2\right.\\
			&\quad \left.-\frac 12\sum_{l=2}^{n-1}(a_{l-1}-a_{l})(u^{n-l})^2-\frac 12 a_{n-1}(u^0)^2  \right].\\
		\end{aligned}
	\end{equation*}
	
	Summing the above inequality from 1 to n, we obtain
	\begin{equation}\label{theoremb4}
		\begin{aligned}
			\Delta t\sum_{k=1}^{n}\left( \mathbb{D}_t^\alpha u^k\right)u^k&\geq\frac{\Delta t}{\Gamma(1-\alpha)}\left[\frac 12\frac{1}{(1-\alpha)\Delta t^{\alpha}}(u^n)^2+\sum_{l=1}^{n-1}\frac 12 a_l (u^{n-l})^2\right.\\
			&\quad \left.-\left(\frac 12\frac{1}{(1-\alpha)\Delta t^{\alpha}}+\sum_{l=1}^{n-1}\frac 12 a_l\right)(u^0)^2\right]\\
			&\geq\frac{\Delta t}{\Gamma(1-\alpha)}\left[\frac 12 a_{n-1}\sum_{k=1}^n(u^k)^2-\left(\frac 12\frac{1}{(1-\alpha)\Delta t^{\alpha}}+\sum_{l=1}^{n-1}\frac 12 a_l\right)(u^0)^2 \right],
		\end{aligned}
	\end{equation}
	where
	\begin{equation*}
		\begin{aligned}
			a_{n-1}&=\sum_{i=1}^{N_A}\frac{\tilde{w}_i(1-e^{-\tilde{s}_i\Delta t})e^{-(n-1)\tilde{s}_i\Delta t}}{\tilde{s}_i\Delta t}\\
			&\geq \left(\sum_{i=1}^{N_A}\tilde{w}_ie^{-n\tilde{s}_i\Delta t}\right)\min_{x>0}\frac{e^{x}-1}{x}\\
			&\geq \left(\frac{1}{t_n^{\alpha}}-\e_0\right)\min_{x>0}\frac{e^{x}-1}{x}\\
			&\geq \frac{1}{t_n^{\alpha}}-\e_0,
		\end{aligned}
	\end{equation*}
	and
	\begin{equation*}
		\begin{aligned}
			\frac{1}{(1-\alpha)\Delta t^{\alpha}}+\sum_{l=1}^{n-1} a_l&=\frac{1}{(1-\alpha)\Delta t^{\alpha}}+ \sum_{i=1}^{N_A}\sum_{l=1}^{n-1}\frac{\tilde{w}_i(1-e^{-\tilde{s}_i\Delta t})e^{-l\tilde{s}_i\Delta t}}{\tilde{s}_i\Delta t}\\
			&=\frac{1}{(1-\alpha)\Delta t^{\alpha}}+ \sum_{i=1}^{N_A}\frac{\tilde{w}_i(e^{-\tilde{s}_i\Delta t}-e^{-n\tilde{s}_i\Delta t})}{\tilde{s}_i\Delta t}\\
			&\leq\frac{1}{(1-\alpha)\Delta t^{\alpha}}+ \left(\sum_{i=1}^{N_A}\tilde{w}_ie^{-\tilde{s}_i\Delta t}\right)\max_{x>0}\frac{1-e^{-(n-1)x}}{x}\\
			&\leq \frac{1}{(1-\alpha)\Delta t^{\alpha}}+(n-1)\frac{1}{\Delta t^{\alpha}}\\
			&=\frac{1+(1-\alpha)(n-1)}{(1-\alpha)\Delta t^{\alpha}}.
		\end{aligned}
	\end{equation*}
	Substitute these equations into \eqref{theoremb4}, we get
	\begin{equation}
		\begin{aligned}
			\Delta t\sum_{k=1}^{n}\left( \mathbb{D}_t^\alpha u^k\right)u^k\geq &\frac{\Delta t}{\Gamma(1-\alpha)}\left[\frac 12 \left(\frac{1}{t_n^{\alpha}}-\e_0\right)\sum_{k=1}^n(u^k)^2\right.\\
			&\quad\left.-\frac 12 \left(\frac{1+(1-\alpha)(n-1)}{(1-\alpha)\Delta t^{\alpha}}\right)(u^0)^2 \right]\\
			=&\frac{\Delta t(t_n^{-\alpha}-\e_0)}{2\Gamma(1-\alpha)}\sum_{k=1}^n(u^k)^2-\frac{\Delta t^{1-\alpha}(1-\alpha)^{-1}+t_{n-1}\Delta t^{-\alpha}}{2\Gamma(1-\alpha)}(u^0)^2.
		\end{aligned}
	\end{equation}
	And the lemma is proved.
\end{proof}
With this lemma, we can do the prior estimate and calculate the global error like FIR.

Consider the diffusion PDE problem
\begin{equation}\label{problem3}
	\begin{aligned}
		& ^C_0D_t^\alpha u(x,t)=u_{xx}(x,t)+f(x,t), &x\in\Omega,\ t>0,\\
		& u(x,0)=x_0(x),&x\in \Omega,\\
		& \frac{\partial u(x,t)}{\partial x}=\frac{1}{\Gamma (1-\frac \alpha 2)}\int_0^t \frac{u_s(x,s)}{(t-s)^{\frac \alpha 2}}ds:=\ ^C_0D_t^{\frac \alpha 2} u(x,t), &x=x_l,\\
		& \frac{\partial u(x,t)}{\partial x}=-\frac{1}{\Gamma (1-\frac \alpha 2)}\int_0^t \frac{u_s(x,s)}{(t-s)^{\frac \alpha 2}}ds:=- ^C_0D_t^{\frac \alpha 2} u(x,t), &x=x_r.\\
	\end{aligned}
\end{equation}\\
The finite difference scheme 
\begin{equation}\label{dif3}
	\begin{aligned}
		& \mathbb{D}_t^\alpha u_i^n=\delta^2_x u_i^n+f_i^n, &1\leq i\leq N_S-1,1\leq n\leq N_T,\\
		& \mathbb{D}_t^\alpha u_0^n=\frac 2h\left[\delta_x u_{\frac 12}^n-  \mathbb{D}_t^{\frac \alpha 2} u_0^n\right]+f_0^n,\\
		& \mathbb{D}_t^\alpha u_{N_S}^n=\frac 2 h \left[-\delta_x u_{N_S-\frac 12}^n-   \mathbb{D}_t^{\frac \alpha 2} u_{N_S}^n\right]+f_{N_S}^n,\\
		&u_i^0=u_0(x_i),&0\leq i\leq N_S.
	\end{aligned}
\end{equation}

\begin{theorem}[Prior Estimate]\label{FIDRprior}
	Suppose $\{u_i^k|0\leq k\leq N_T,\ 0\leq i\leq N_S\}$ is the
	solution of the finite difference scheme \eqref{dif3}. Then for any $1\leq n\leq N_T$,
	\begin{equation}\label{theoremB5}
		\begin{aligned}
			\Delta t\sum_{k=1}^{n}||u^k||_{\infty}^2\leq&\frac{2\left(1+\sqrt{1+L^2\tilde{\mu}}\right)}{L\tilde{\mu}}\Bigg(\tilde{\rho}||u^0||^2+\tilde{\kappa}[(u_0^0)^2+(u_{N_S}^0)^2]\\
			&+\frac{\Delta t}{8\tilde{\nu}}\sum_{k=1}^{n}[(hf_0^k)^2+(hf_{N_S}^k)^2]+\frac{\Delta t}{\tilde{\mu}}\sum_{k=1}^n h\sum_{i=1}^{N_S-1}(f_i^k)^2\Bigg),
		\end{aligned}
	\end{equation}
	where
	\begin{equation*}
		\begin{aligned}
			\tilde{\rho}&=\frac{\Delta t^{1-\alpha}(1-\alpha)^{-1}+t_{n-1}\Delta t^{-\alpha}}{2\Gamma(1-\alpha)},&\tilde{\kappa}=\frac{\Delta t^{1-\frac \alpha 2}\left(1-\frac \alpha 2\right)^{-1}+t_{n-1}\Delta t^{-\frac \alpha 2}}{2\Gamma\left(1-\frac \alpha 2\right)},\\
			\tilde{\mu}&=\frac{t_n^{-\alpha}-\e_0}{\Gamma(1-\alpha)}, &\tilde{\nu}=\frac{t_n^{-\frac \alpha 2}-\e_0}{\Gamma\left(1-\frac \alpha 2\right)}.\\
		\end{aligned}
	\end{equation*}
\end{theorem}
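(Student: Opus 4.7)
The plan is to mirror the proof of Theorem \ref{FIRprior}, replacing Lemma \ref{app} with the newly established inequality \eqref{LemmaB4}, which is the FIDR analogue. The energy estimate structure is identical; only the constants and the exact lower-bound expressions change, which is precisely why $\tilde{\rho},\tilde{\mu},\tilde{\varrho},\tilde{\nu}$ replace $\rho,\mu,\varrho,\nu$ throughout.

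First I would multiply the interior equation in \eqref{dif3} by $h u_i^k$ and sum over $i=1,\dots,N_S-1$, then multiply the boundary equations by $\tfrac{h}{2}u_0^k$ and $\tfrac{h}{2}u_{N_S}^k$ respectively, and add the three identities. Summation by parts, identical to \eqref{theorem1.2}, converts the discrete Laplacian contribution into $\|\delta_x u^k\|^2$. Multiplying by $\Delta t$ and summing from $k=1$ to $n$ yields the discrete energy identity
\begin{equation*}
\Delta t\sum_{k=1}^n \left(\mathbb{D}_t^\alpha u^k, u^k\right) + \Delta t\sum_{k=1}^n\|\delta_x u^k\|^2 + \Delta t\sum_{k=1}^n (\mathbb{D}_t^{\alpha/2} u_0^k)u_0^k + \Delta t\sum_{k=1}^n (\mathbb{D}_t^{\alpha/2} u_{N_S}^k)u_{N_S}^k = \text{RHS},
\end{equation*}
where the right-hand side is the forcing term from \eqref{theorem1.3}.

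Second I would apply Lemma \eqref{LemmaB4} three times: once with $u^k$ (with order $\alpha$) to extract the interior bound in terms of $\tilde{\mu}$ and $\tilde{\rho}$, and twice with the boundary sequences $\{u_0^k\}$ and $\{u_{N_S}^k\}$ (with order $\alpha/2$) to extract bounds involving $\tilde{\nu}$ and $\tilde{\varrho}$. This produces lower bounds analogous to those preceding \eqref{theorem1.4}. The source terms on the right would then be handled by Cauchy-Schwarz applied mode by mode, exactly as in the display following \eqref{theorem1.4}, splitting each cross term into a bulk part (absorbed by $\tfrac{\tilde{\mu}}{4}\|u^k\|^2$) and a boundary part (absorbed by $\tfrac{\tilde{\nu}}{2}[(u_0^k)^2+(u_{N_S}^k)^2]$), with the remainder contributing the $f$-norms divided by $\tilde{\mu}$ and $\tilde{\nu}$.

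Finally I would invoke Lemma \ref{mesh} with the optimal choice $\theta = 2(1+\sqrt{1+L^2\tilde{\mu}})/(L\tilde{\mu})$, which balances $\|\delta_x u^k\|^2$ against $\tfrac{\tilde{\mu}}{4}\|u^k\|^2$ so that the bracketed expression matches the left side of \eqref{theoremB5}, giving the claimed $L^\infty$-in-space, $L^2$-in-time bound. The only genuinely nontrivial step is the coercivity estimate \eqref{LemmaB4} itself, which has already been established; beyond that, the main obstacle is simply keeping careful track of the coefficient bookkeeping so that $\tilde{\mu},\tilde{\nu}$ arise in the Cauchy-Schwarz splitting with precisely the prefactors $1/(8\tilde{\nu})$ and $1/\tilde{\mu}$ on the forcing terms, ensuring consistency with the advertised constants.
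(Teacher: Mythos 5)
Your proposal is correct and follows essentially the same route as the paper's own proof: the identical energy identity obtained by testing with $hu_i^k$ and $\tfrac h2 u_0^k$, $\tfrac h2 u_{N_S}^k$, summation by parts, three applications of the coercivity inequality \eqref{LemmaB4} (with orders $\alpha$ and $\alpha/2$), Cauchy--Schwarz on the forcing terms, and Lemma \ref{mesh} with $\theta=2(1+\sqrt{1+L^2\tilde{\mu}})/(L\tilde{\mu})$. No gaps; the bookkeeping you describe is exactly what the paper carries out.
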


\begin{proof}
	Multiplying $hu_i^k$ on both sides of the first equation of \eqref{dif3}, and summing up for $i$ from 1 to
	$N_S-1$, we have 
	\begin{align*}
		h\sum_{i=1}^{N_S-1}( \mathbb{D}_t^\alpha u_i^k)u_i^k-h\sum_{i=1}^{N_S-1}(\delta_x^2 u_i^k)u_i^k=h\sum_{i=1}^{N_S-1}f_i^ku_i^k.
	\end{align*}
	Multiplying $\frac h2 u_0^k$ and $\frac h2 u_{N_S}^k$ on both sides of the second equation and the third equation of \eqref{dif3}, respectively, then
	adding the results with the above identity, we obtain
	\begin{equation}\label{sumx}
		\begin{aligned}
			&\left( \mathbb{D}_t^\alpha u^k,u^k\right)+\left[-\left(\delta_xu_{\frac{1}{2}}^k\right)u_0^k-h\sum_{i=1}^{N_S-1}\left(\delta_x^2 u_i^k\right)u_i^k+\left(\delta_x u_{N_S-\frac 12}^k\right)u_{N_S}^k\right]\\
			&\quad +\left( \mathbb{D}_t^{\frac \alpha 2} u_0^k\right)u_0^k+\left( \mathbb{D}_t^{\frac \alpha 2} u_{N_S}^k\right)u_{N_S}^k=\frac 12 (hf_0^k)u_0^k+h\sum_{i=1}^{N_S-1}f_i^ku_i^k+\frac 12 (hf_{N_S}^k)u_{N_S}^k.
		\end{aligned}
	\end{equation}
	Observing the summation by parts, we have
	\begin{equation}\label{eq47}
		\begin{aligned}
			&-\left(\delta_xu_{\frac{1}{2}}^k\right)u_0^k-h\sum_{i=1}^{N_S-1}\left(\delta_x^2 u_i^k\right)u_i^k+\left(\delta_x u_{N_S-\frac 12}^k\right)u_{N_S}^k\\
			=&-\left(\delta_xu_{\frac{1}{2}}^k\right)u_0^k-\sum_{i=1}^{N_S-1}\left(\delta_xu_{i+\frac 12}^k-\delta_xu_{i-\frac 12}^k\right)u_i^k+\left(\delta_x u_{N_S-\frac 12}^k\right)u_{N_S}^k\\
			=&\sum_{i=1}^{N_S}\left(\delta_xu_{i-\frac 12}^k\right)\left(u_i^k-u_{i-1}^k\right)\\
			=&h\sum_{i=1}^{N_S}\left(\delta_xu_{i-\frac 12}^k\right)^2=||\delta_xu^k||^2.
		\end{aligned}
	\end{equation}
	Substituting \eqref{eq47} into \eqref{sumx}, and multiplying $\Delta t$ on both sides of the resulting
	identity, and summing up for k from 1 to n,
	\begin{equation}\label{theoremb51}
		\begin{aligned}
			&\Delta t\sum_{k=1}^n\left( \mathbb{D}_t^\alpha u^k,u^k\right)+\Delta t\sum_{k=1}^n||\delta_xu^k||^2+\Delta t\sum_{k=1}^n\left( \mathbb{D}_t^{\frac \alpha 2} u_0^k\right)u_0^k+\Delta t\sum_{k=1}^n\left( \mathbb{D}_t^{\frac \alpha 2} u_{N_S}^k\right)u_{N_S}^k\\
			=&\Delta t\sum_{k=1}^n\left[\frac 12 \left(hf_0^k\right)u_0^k+h\sum_{i=1}^{N_S-1}f_i^ku_i^k+\frac 12 \left(hf_{N_S}^k\right)u_{N_S}^k\right].
		\end{aligned}
	\end{equation}
	From \eqref{LemmaB4}, we have
	\begin{equation*}
		\begin{aligned}
			\Delta t\sum_{k=1}^n\left( \mathbb{D}_t^\alpha u^k,u^k\right)&\geq\frac{\Delta t(t_n^{-\alpha}-\e_0)}{2\Gamma(1-\alpha)}\sum_{k=1}^n||u^k||^2-\frac{\Delta t^{1-\alpha}(1-\alpha)^{-1}+t_{n-1}\Delta t^{-\alpha}}{2\Gamma(1-\alpha)}||u^0||^2\\
			\Delta t\sum_{k=1}^n\left( \mathbb{D}_t^{\frac \alpha 2} u_0^k\right)u_0^k&\geq\frac{\Delta t\left(t_n^{-\frac \alpha 2}-\e_0\right)}{2\Gamma\left(1-\frac \alpha 2\right)}\sum_{k=1}^n(u_0^k)^2-\frac{\Delta t^{1-\frac \alpha 2}\left(1-\frac \alpha 2\right)^{-1}+t_{n-1}\Delta t^{-\frac \alpha 2}}{2\Gamma\left(1-\frac \alpha 2\right)}(u_0^0)^2\\
			\Delta t\sum_{k=1}^n\left( \mathbb{D}_t^{\frac \alpha 2} u_{N_S}^k\right)u_{N_S}^k&\geq\frac{\Delta t\left(t_n^{-\frac \alpha 2}-\e_0\right)}{2\Gamma\left(1-\frac \alpha 2\right)}\sum_{k=1}^n(u_{N_S}^k)^2-\frac{\Delta t^{1-\frac \alpha 2}\left(1-\frac \alpha 2\right)^{-1}+t_{n-1}\Delta t^{-\frac \alpha 2}}{2\Gamma\left(1-\frac \alpha 2\right)}(u_{N_S}^0)^2.
		\end{aligned}
	\end{equation*}
	Substitute these equation into \eqref{theoremb51}, 
	\begin{equation}\label{theoremb52}
		\begin{aligned}
			&\frac{\Delta t(t_n^{-\alpha}-\e_0)}{2\Gamma(1-\alpha)}\sum_{k=1}^n||u^k||^2+\Delta t\sum_{k=1}^n||\delta_xu^k||^2+\frac{\Delta t\left(t_n^{-\frac \alpha 2}-\e_0\right)}{2\Gamma\left(1-\frac \alpha 2\right)}\sum_{k=1}^n\left[\left(u_0^k\right)^2+\left(u_{N_S}^k\right)^2\right]\\
			\leq&\frac{\Delta t^{1-\alpha}(1-\alpha)^{-1}+t_{n-1}\Delta t^{-\alpha}}{2\Gamma(1-\alpha)}||u^0||^2\\
			&\quad+\frac{\Delta t^{1-\frac \alpha 2}\left(1-\frac \alpha 2\right)^{-1}+t_{n-1}\Delta t^{-\frac \alpha 2}}{2\Gamma\left(1-\frac \alpha 2\right)}\left[\left(u_0^0\right)^2+\left(u_{N_S}^k\right)^2\right]\\
			&\quad+\Delta t\sum_{k=1}^n\left[\frac 12 (hf_0^k)u_0^k+h\sum_{i=1}^{N_S-1}f_i^ku_i^k+\frac 12 \left(hf_{N_S}^k\right)u_{N_S}^k\right].
		\end{aligned}
	\end{equation}
	Applying the Cauchy-Schwarz inequality, we obtain
	\begin{equation*}
		\begin{aligned}
			&\frac 12 \left(hf_0^k\right)u_0^k+h\sum_{i=1}^{N_S-1}f_i^ku_i^k+\frac 12 \left(hf_{N_S}^k\right)u_{N_S}^k\\
			\leq&\frac{t_n^{-\frac \alpha 2}-\e_0}{2\Gamma\left(1-\frac \alpha 2\right)}\left[(u_0^k)^2+\left(u_{N_S}^k\right)^2\right]+\frac{\Gamma(1-\frac \alpha 2)}{8(t_n^{-\frac \alpha 2}-\e_0)}\left[(hf_0^k)^2+\left(hf_{N_S}^k\right)^2\right]\\
			&\quad+h\sum_{i=1}^{N_S-1}\left[\frac{t_n^{-\alpha}-\e_0}{4\Gamma(1-\alpha)}(u_i^k)^2+\frac{\Gamma(1-\alpha)}{t_n^{-\alpha}-\e_0}(f_i^k)^2\right]\\
			\leq&\frac{t_n^{-\frac \alpha 2}-\e_0}{2\Gamma\left(1-\frac \alpha 2\right)}\left[(u_0^k)^2+\left(u_{N_S}^k\right)^2\right]+\frac{\Gamma\left(1-\frac \alpha 2\right)}{8 \left(t_n^{-\frac \alpha 2}-\e_0\right)}\left[(hf_0^k)^2+\left(hf_{N_S}^k\right)^2\right]\\
			&\quad+\frac{t_n^{-\alpha}-\e_0}{4\Gamma(1-\alpha)}||u^k||^2+h\sum_{i=1}^{N_S-1}\frac{\Gamma(1-\alpha)}{t_n^{-\alpha}-\e_0}(f_i^k)^2.\\
		\end{aligned}
	\end{equation*}
	Then substitute this equation into \eqref{theoremb52},
	\begin{equation*}
		\begin{aligned}
			&\frac{\Delta t(t_n^{-\alpha}-\e_0)}{4\Gamma(1-\alpha)}\sum_{k=1}^n||u^k||^2+\Delta t\sum_{k=1}^n||\delta_xu^k||^2\\
			\leq&\frac{\Delta t^{1-\alpha}(1-\alpha)^{-1}+t_{n-1}\Delta t^{-\alpha}}{2\Gamma(1-\alpha)}||u^0||^2\\
			&\quad+\frac{\Delta t^{1-\frac \alpha 2}\left(1-\frac \alpha 2\right)^{-1}+t_{n-1}\Delta t^{-\frac \alpha 2}}{2\Gamma\left(1-\frac \alpha 2\right)}\left[(u_0^0)^2+\left(u_{N_S}^k\right)^2\right]\\
			&\quad+\frac{\Delta t\Gamma(1-\frac \alpha 2)}{8 \left(t_n^{-\frac \alpha 2}-\e_0\right)}\sum_{k=1}^{n}\left[(hf_0^k)^2+\left(hf_{N_S}^k\right)^2\right]+\frac{\Delta t\Gamma(1-\alpha)}{t_n^{-\alpha}-\e_0}\sum_{k=1}^{n}h\sum_{i=1}^{N_S-1}(f_i^k)^2,
		\end{aligned}
	\end{equation*}
	which is 
	\begin{equation}\label{theoremb53}
		\begin{aligned}
			&\frac{\tilde{\mu}}{4}\Delta t\sum_{k=1}^n||u^k||^2+\Delta t\sum_{k=1}^n||\delta_xu^k||^2\leq \tilde{\rho} ||u^0||^2+\tilde{\kappa}\left[(u_0^0)^2+\left(u_{N_S}^k\right)^2\right]\\
			&\qquad+\frac{\Delta t}{8\tilde{\nu}}\sum_{k=1}^{n}\left[(hf_0^k)^2+\left(hf_{N_S}^k\right)^2\right]+\frac{\Delta t}{\tilde{\mu}}\sum_{k=1}^{n}h\sum_{i=1}^{N_S-1}(f_i^k)^2.
		\end{aligned}
	\end{equation}
	Again we use Lemma \ref{mesh}, Taking $\theta >0$ such that $\frac{1/\theta+1/L}{\theta}=\frac{\tilde{\mu}}{4}$ \\(i.e. $\theta=2\left(1+\sqrt{1+L^2\tilde{\mu}}\right)/(L\tilde{\mu})$), and following
	from Lemma \ref{mesh}, we have 
	\begin{equation}\label{theoremb54}
		\Delta t\sum_{k=1}^n||u^k||_{\infty}^2\leq \frac{2\left(1+\sqrt{1+L^2\tilde{\mu}}\right)}{L\tilde{\mu}}\left(\frac{\tilde{\mu}}{4}\Delta t\sum_{k=1}^{n}||u^k||^2+\Delta t\sum_{k=1}^{n}||\delta_x u^k||^2\right).
	\end{equation}
	Combining \eqref{theoremb54} and \eqref{theoremb53}, we obtain the theorem. 
\end{proof}

\begin{theorem}[Error Analysis]\label{FIDRerror2}
	Suppose $u(x,t)\in C_{x,t}^{4,2}([x_l,x_r]\times [0,T]) $ and $\{u_i^k|0\leq i\leq N_S,\ 0\leq k\leq N_T\}$ are solutions of the problem and the
	difference scheme \eqref{dif1}, respectively. Let $e_i^k=u_i^k-u(x_i,t_k)$. Then there
	exists a positive constant $\tilde{c}_2$ such that
	\begin{equation}\label{FIDRglobalerror}
		\begin{aligned}
			\e_{\text{global}}:=\sqrt{\Delta t \sum_{k=1}^n||e^k||_{\infty}^2}\leq\tilde{c}_2(\Delta t^{2-\alpha}+h^2+\e_0),
		\end{aligned}
	\end{equation}
	where
	\begin{equation*}
		\tilde{c}_2^2=\frac{4\tilde{c}_1^2T\left(1+\sqrt{1+L^2\tilde{\mu}}\right)}{L\tilde{\mu}}\left(\frac{1}{\tilde{\nu}}+\frac{L}{\tilde{\mu}}\right).
	\end{equation*}
\end{theorem}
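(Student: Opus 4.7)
The plan is to mirror the argument used in Theorem \ref{FIRerror2}, but with the FIR consistency bound replaced by the FIDR consistency bound from Theorem \ref{boundFIDR} and the FIR priori estimate replaced by the FIDR priori estimate from Theorem \ref{FIDRprior}. First I would write down the equation satisfied by the error $e_i^k = u_i^k - u(x_i, t_k)$. By linearity of scheme \eqref{dif3} and the PDE \eqref{problem3}, the error satisfies exactly the same scheme with zero initial data and truncation terms $T_i^k$ (interior) and $T_0^k, T_{N_S}^k$ (boundary), where each truncation term decomposes as a time-discretization error plus a space-discretization error.

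Next, I would estimate these truncation terms. For interior points, the spatial truncation from the standard central difference of $u_{xx}$ contributes an $O(h^2)$ term (since $u\in C^{4,2}_{x,t}$), while the time truncation is bounded using Theorem \ref{boundFIDR} by $C(\Delta t^{2-\alpha} + \e_0)$. Combining, $|T_i^k|\le \tilde{c}_1(\Delta t^{2-\alpha} + h^2 + \e_0)$. For the boundary points, following the analogous derivation as in \eqref{theorem2.1}, the one-sided discretization of $u_x$ yields an $O(h)$ spatial consistency term while division by $h$ (present in the boundary scheme) turns the $O(\Delta t^{2-\alpha/2})$ and $O(\e_0)$ time errors into $O(\Delta t^{2-\alpha/2}/h)$ and $O(\e_0/h)$ terms. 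Multiplying through by $h$ inside the eventual prior-estimate sum then absorbs these factors, so that $\tfrac14[(hT_0^k)^2 + (hT_{N_S}^k)^2]$ is controlled by $(\Delta t^{2-\alpha} + h^2 + \e_0)^2$ up to a constant involving $\tilde{\nu}$.

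With the truncation bounds in hand, I would apply Theorem \ref{FIDRprior} to the error system. Since $e_i^0 = 0$, the terms proportional to $\tilde{\rho}\|u^0\|^2$ and $\tilde{\varrho}[(u_0^0)^2+(u_{N_S}^0)^2]$ on the right-hand side of \eqref{theoremB5} vanish, and the forcing $f_i^k$ is replaced by $T_i^k$. Hence
\begin{equation*}
\Delta t\sum_{k=1}^{n}\|e^k\|_\infty^2 \le \frac{2(1+\sqrt{1+L^2\tilde{\mu}})}{L\tilde{\mu}}\left(\frac{\Delta t}{8\tilde{\nu}}\sum_{k=1}^n[(hT_0^k)^2+(hT_{N_S}^k)^2] + \frac{\Delta t}{\tilde{\mu}}\sum_{k=1}^n h\sum_{i=1}^{N_S-1}(T_i^k)^2\right).
\end{equation*}
Substituting the truncation bounds and using $\Delta t\sum_{k=1}^n 1 \le T$ yields the estimate with constant $\tilde{c}_2^2 = \frac{4\tilde{c}_1^2 T(1+\sqrt{1+L^2\tilde{\mu}})}{L\tilde{\mu}}\left(\frac{1}{\tilde{\nu}}+\frac{L}{\tilde{\mu}}\right)$, as claimed.

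I expect the main obstacle to be the careful bookkeeping at the boundary, where the $1/h$ amplification of the time and sum-of-exponentials errors in $T_0^k, T_{N_S}^k$ must be precisely balanced by the $h^2$ weight appearing in the prior estimate; verifying that the boundary terms indeed collapse to the same $(\Delta t^{2-\alpha}+h^2+\e_0)^2$ order — and that the smoothness assumption $u\in C^{4,2}_{x,t}$ is genuinely sufficient to invoke Theorem \ref{boundFIDR} with $u_t$ and $u_{tt}$ bounded — is the most delicate step. Everything else is a transcription of the FIR argument with $\mu,\nu,\rho,\varrho$ replaced by their tilded counterparts.
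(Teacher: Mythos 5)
Your proposal follows essentially the same route as the paper's own proof: derive the error system with truncation terms, bound the interior and boundary truncation errors via Theorem \ref{boundFIDR} together with standard spatial Taylor expansions (yielding exactly the bounds \eqref{theoremb61}), and then apply the prior estimate of Theorem \ref{FIDRprior} with zero initial error and $\Delta t\sum_{k=1}^{n}1\leq T$ to obtain the stated constant $\tilde{c}_2$. The bookkeeping at the boundary that you flag as the delicate step is handled in the paper exactly as you describe, with the $h^2$ weight absorbing the $1/h$ amplification.
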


\begin{proof}
	We observe that the error $e^k$ satisfies the following scheme:
	\begin{equation}\label{dif4}
		\begin{aligned}
			\mathbb{D}_t^\alpha e_i^k&=\delta^2_x e_i^k+T_i^k, &1\leq i\leq N_S-1,1\leq k\leq N_T,\\
			\mathbb{D}_t^\alpha e_0^k&=\frac 2h\left[\delta_x e_{\frac 12}^k- \mathbb{D}_t^{\frac \alpha 2} e_0^k\right]+T_0^k,\\
			\mathbb{D}_t^\alpha e_{N_S}^k&=\frac 2 h \left[-\delta_x e_{N_S-\frac 12}^k-  \mathbb{D}_t^{\frac \alpha 2} e_{N_S}^k\right]+T_{N_S}^k,\\
			e_i^0&=0,&0\leq i\leq N_S,
		\end{aligned}
	\end{equation}
	where the truncation terms $\tilde{T}^k$ at the interior and boundary points are given by the
	formulas
	\begin{equation*}
		\begin{aligned}
			T_i^k=&-\left[ ^C_0D_t^\alpha u(x_i,t_k)- \mathbb{D}_t^\alpha U_i^k\right]+\left[u_{xx}(x_i,t_k)-\delta_x^2U_i^k\right], \quad 1\leq i\leq N_S-1,1\leq k\leq N_T,\\
			T_0^k=&\left\lbrace u_{xx}(x_0,t_k)-\frac 2 h\left[\delta_xU_{\frac 12}^k-u_x(x_0,t_k)\right]-\frac 2 h \left[ ^C_0D_t^{\frac \alpha 2}u(x_0,t_k)- \mathbb{D}_t^{\frac \alpha 2}U_0^k\right] \right\rbrace\\
			&\quad -\left[ ^C_0D_t^\alpha u(x_0,t_k)- \mathbb{D}_t^\alpha U_0^k\right]\\
			T_{N_S}^k=&\left\lbrace u_{xx}(x_{N_S},t_k)+\frac 2 h\left[\delta_xU_{N_S-\frac 12}^k-u_x(x_{N_S},t_k)\right]-\frac 2 h \left[ ^C_0D_t^{\frac \alpha 2}u(x_{N_S},t_k)- \mathbb{D}_t^{\frac \alpha 2}U_{N_S}^k\right] \right\rbrace\\
			&\quad -\left[ ^C_0D_t^\alpha u(x_{N_S},t_k)- \mathbb{D}_t^\alpha U_{N_S}^k\right].\\
		\end{aligned}
	\end{equation*}
	We can show that the truncation terms $T^k$ satisfy the following error bounds
	\begin{equation}\label{theoremb61}
		\begin{aligned}
			|T_i^k|&\leq \tilde{c}_1\left(\Delta t^{2-\alpha}+h^2+\e_0\right),\\
			|T_0^k|&\leq \tilde{c}_1\left(\Delta t^{2-\alpha}+h+\frac{\Delta t^{2-\alpha/2}}{h}+\frac {\e_0} h\right),\\
			|T_{N_S}^k|&\leq \tilde{c}_1\left(\Delta t^{2-\alpha}+h+\frac{\Delta t^{2-\alpha/2}}{h}+\frac {\e_0} h\right).
		\end{aligned}
	\end{equation}
	with $\tilde{c}_1$ some positive constant. Thus, for $h\leq 1$ and $\Delta t\leq 1$, we have 
	\begin{equation}\label{FIDR-c}
		\begin{aligned}
			&\frac{1}{4\tilde{\nu}}\left[(hT_0^k)^2+(hT_{N_S}^k)^2\right]+\frac{2}{\tilde{\mu}} h\sum_{i=1}^{N_S-1}(T_i^k)^2\\
			\leq&\frac{\tilde{c}_1^2}{2\tilde{\nu}}\left(h\Delta t^{2-\alpha}+h^2+\Delta t^{2-\alpha/2}+\e_0\right)^2+\frac{2\tilde{c}_1^2L}{\mu}\left(\Delta t^{2-\alpha}+h^2+\e_0\right)^2\\
			\leq&\frac{2\tilde{c}_1^2}{\tilde{\nu}}\left(\Delta t^{2-\alpha}+h^2+\e_0\right)^2+\frac{2\tilde{c}_1^2L}{\tilde{\mu}}\left(\Delta t^{2-\alpha}+h^2+\e_0\right)^2\\
			\leq&4\tilde{c}_1^2\left(\frac{1}{\tilde{\nu}}+\frac{L}{\tilde{\mu}}\right)(\Delta t^{2-\alpha}+h^2+\e_0)^2.
		\end{aligned}
	\end{equation}
	Applying Theorem \ref{FIDRprior} in \eqref{dif4},
	\begin{equation}
		\begin{aligned}
			\Delta t\sum_{k=1}^{n}||e^k||_{\infty}^2\leq&\frac{\Delta t\left(1+\sqrt{1+L^2\tilde{\mu}}\right)}{L\tilde{\mu}}\sum_{k=1}^{n}\left(\frac{1}{4\tilde{\nu}}[(hT_0^k)^2+(hT_{N_S}^k)^2]+\frac{2}{\tilde{\mu}}h\sum_{i=1}^{N_S-1}(T_i^k)^2\right)\\
			\leq&\frac{4\tilde{c}_1^2T\left(1+\sqrt{1+L^2\tilde{\mu}}\right)}{L\tilde{\mu}}\left(\frac{1}{\tilde{\nu}}+\frac{L}{\tilde{\mu}}\right)\left(\Delta t^{2-\alpha}+h^2+\e_0\right)^2,
		\end{aligned}
	\end{equation}
	and the theorem is proved.
\end{proof}

\section{Comparation and numerical results}\label{Sec-num}
This section presents numerical experiments. In Section 5.1, we compare the approximation accuracy of the sum-of-exponentials in FIR and FIDR, which has been analysed in Section 3. Then we carry out the numerical experiments to test the convergence rates of global error $\e_{\text{global}}$ of FIR \eqref{theorem2} and FIDR \eqref{FIDRglobalerror} in Section 5.2. {Besides, we numerically investigate the trade-off between the number of modes $N_{\text{exp}},N_{A}$ and the global error in the small $\alpha$ regime.} All data generated or analysed during this study are included in this published article.

\subsection{Comparison of the errors in sum-of-exponentials} \label{num:sum}

From Theorem \ref{FIRerror2} and Theorem \ref{FIDRerror2}, we learn that the global error of FIR $\e_{\text{FIR}}$ and the global error of FIDR $\e_{\text{FIDR}}$ satisfy, 

\begin{equation}\label{Compare}
	\begin{aligned}
		\e_{\text{FIR}}\leq c_2(\Delta t^{2-\alpha}+h^2+{\alpha}\e),\qquad 
		\e_{\text{FIDR}}\leq\tilde{c}_2(\Delta t^{2-\alpha}+h^2+\e_0),
	\end{aligned}
\end{equation}
where
\begin{equation*}
	\begin{aligned}
		&c_2^2=\frac{4c_1^2T\left(1+\sqrt{1+L^2\mu}\right)}{L\mu}\left(\frac 1\nu+\frac{L}{\mu}\right), 
		&\tilde{c}_2^2=\frac{4\tilde{c}_1^2T\left(1+\sqrt{1+L^2\tilde{\mu}}\right)}{L\tilde{\mu}}\left(\frac{1}{\tilde{\nu}}+\frac{L}{\tilde{\mu}}\right),	
	\end{aligned}
\end{equation*}
and

\begin{equation*}
	\begin{aligned}
		&\mu=\frac{t_n^{-\alpha}-2\alpha \e t_{n-1}}{\Gamma(1-\alpha)},\quad &\tilde{\mu}=\frac{t_n^{-\alpha}-\e_0}{\Gamma(1-\alpha)},\quad
		&\nu=\frac{t_n^{-\frac \alpha 2}-\alpha \e t_{n-1}}{\Gamma(1-\frac \alpha 2)},\quad\\ &\tilde{\nu}=\frac{t_n^{-\frac \alpha 2}-\e_0}{\Gamma(1-\frac \alpha 2)},\quad 
		&c_1/\tilde{c}_1=O(1).
	\end{aligned}
\end{equation*}

For $T=O(1)$ and $\Delta t$ and $h$ are sufficiently small, the errors are dominated by $\alpha\e $ and $\eps_0$ respectively.

In FIR and FIDR, we set $N_{\text{exp}}=25$ ($a=3,\ b=10,\ n_1=4,\ n_2=3$). Meanwhile, we set $\alpha=0.1$, and in the following figure we can find that the global error of these two scheme are close. Here we compute the error of the sum-of-exponentials approximation.\\
For FIR and FIDR, define the error of sum-of-exponential approximations respectively,
\begin{equation}\label{errorFIRFIDR}
	\e(t)=\left| \frac{1}{t^{1+\alpha}}-\sum_{i=1}^{N_{A}}w_ie^{-s_it}\right|,\qquad \e_0(t)=\left| \frac{1}{t^{\alpha}}-\sum_{i=1}^{N_{A}}\tilde{w}_ie^{-\tilde{s}_it}\right|.
\end{equation}
Then $\e$ and $\e_0$ in \eqref{Compare} can be written as
\begin{equation}
	\begin{aligned}
		\e=\max_{0\leq t\leq T}\e(t),\qquad 
		\e_0=\max_{0\leq t\leq T}\e_0(t).
	\end{aligned}
\end{equation}


\begin{figure}[H]
	\centering
	\includegraphics[width=0.7\textwidth]{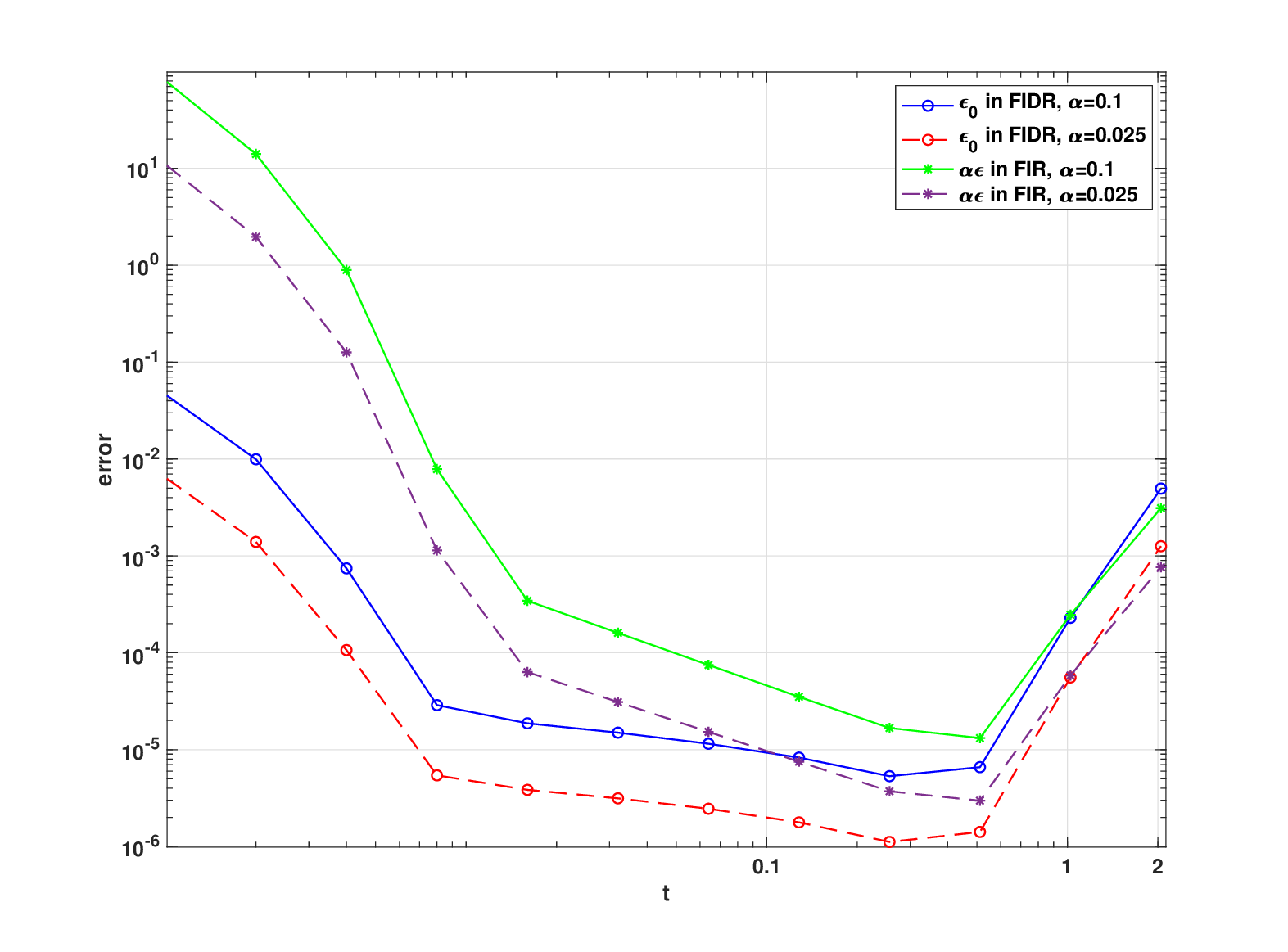}  
	\caption{Error of sum-of-exponentials approximation. The total number of nodes $N_{\text{exp}}=25$ is fixed. X-label denotes the time and Y-label denotes the error. The errors of FIR and FIDR are $\alpha\e(t)$ and $\e_0(t)$, respectively, where $\e(t)$ and $\e_0(t)$  are defined in \eqref{errorFIRFIDR}.}
\end{figure}

This figure shows that when $t<1$, the error of sum-of-exponentials approximation works in FIDR is smaller than in FIR. However, when $t$ becomes larger, the error in FIR and in FIDR are similar. When $T=O(1)$, the error bound of diffusive approximation is smaller than the error bound of fast evaluation. This result is also supported by the numerical experiments in the following section. 

\subsection{Numerical experiments}
In this part first we set an linear fractional diffusion equation \eqref{test1}, \eqref{test2} as the test function, which has been used as the test function in \cite{JiangZhang}. Problem \eqref{test1} has exact solution \eqref{exact}, so we can calculate the global error and compare the efficiency between FIR and FIDR. We also test an nonlinear equation \eqref{nonlinear}, but there is no exact solution this time. We instead present the convergence rate of  the related error  of FIDR.

We first consider the following initial value problem of the linear fractional diffusion equation \eqref{problem}, which is presented in Section 3.3 in \cite{JiangZhang}. 
\begin{equation}\label{test1}
	\begin{aligned}
		^C_0D_t^\alpha u(x,t)&=u_{xx}(x,t)+f(x,t), &x\in\Omega,\ t>0,\\
		u(x,0)&=x_0(x),&x\in \Omega,\\
		\frac{\partial u(x,t)}{\partial x}&=\frac{1}{\Gamma (1-\frac \alpha 2)}\int_0^t \frac{u_s(x,s)}{(t-s)^{\frac \alpha 2}}ds:=\ ^C_0D_t^{\frac \alpha 2} u(x,t), &x=x_l\\
		\frac{\partial u(x,t)}{\partial x}&=-\frac{1}{\Gamma (1-\frac \alpha 2)}\int_0^t \frac{u_s(x,s)}{(t-s)^{\frac \alpha 2}}ds:=- ^C_0D_t^{\frac \alpha 2} u(x,t), &x=x_r.\\
	\end{aligned}
\end{equation}
Take the computational domain $\Omega=[0,\pi]$, and set
\begin{equation}\label{test2}
	\begin{aligned}
		f(x,t)=&\Gamma(4+\alpha)x^4(\pi-x)^4\exp(-x)t^3/6-x^2(\pi-x)^2	\left\{t^{3+\alpha}\exp(-x)\right.\\
		&\quad \left[x^2(56-16x+x^2)-2\pi x(28-12x+x^2)+\pi^2(12-8x+x^2)\right]\\
		&\quad \left. +4(3\pi^2-14\pi x+14x^2) \right\},\\
		u_0(x)=&\left\{\begin{matrix}
			x^4(\pi-x)^4, &x\in\Omega,\\
			0,&x\notin \Omega.
		\end{matrix}\right.
	\end{aligned}
\end{equation}
This problem has the exact solution given by the formula
\begin{equation}\label{exact}
	u(x,t)=x^4(\pi-x)^4\left[\exp(-x)t^{3+\alpha}+1\right], \quad (x,t)\in \Omega\times(0,T].
\end{equation}

{Here we present the numerical results with different $\alpha$. The related error shown in figures below is defined by
	\begin{equation}\label{relatederror}
		\e_{\text{related}}:=\frac{\e_{\text{global}}}{\sqrt{\Delta t\sum_{k=1}^n||u^k||_\infty^2}}.
	\end{equation}
	where $u$ is the real solution in \eqref{exact}, $u^k=(u(x_1,t_k),u(x_2,t_k),\cdots,u(x_{N_S},t_k))$, $\e_{\text{global}}$ is the global error defined in \eqref{FIDRglobalerror}}
\begin{figure}[H]
	\centering  
	\includegraphics[width=0.7\textwidth]{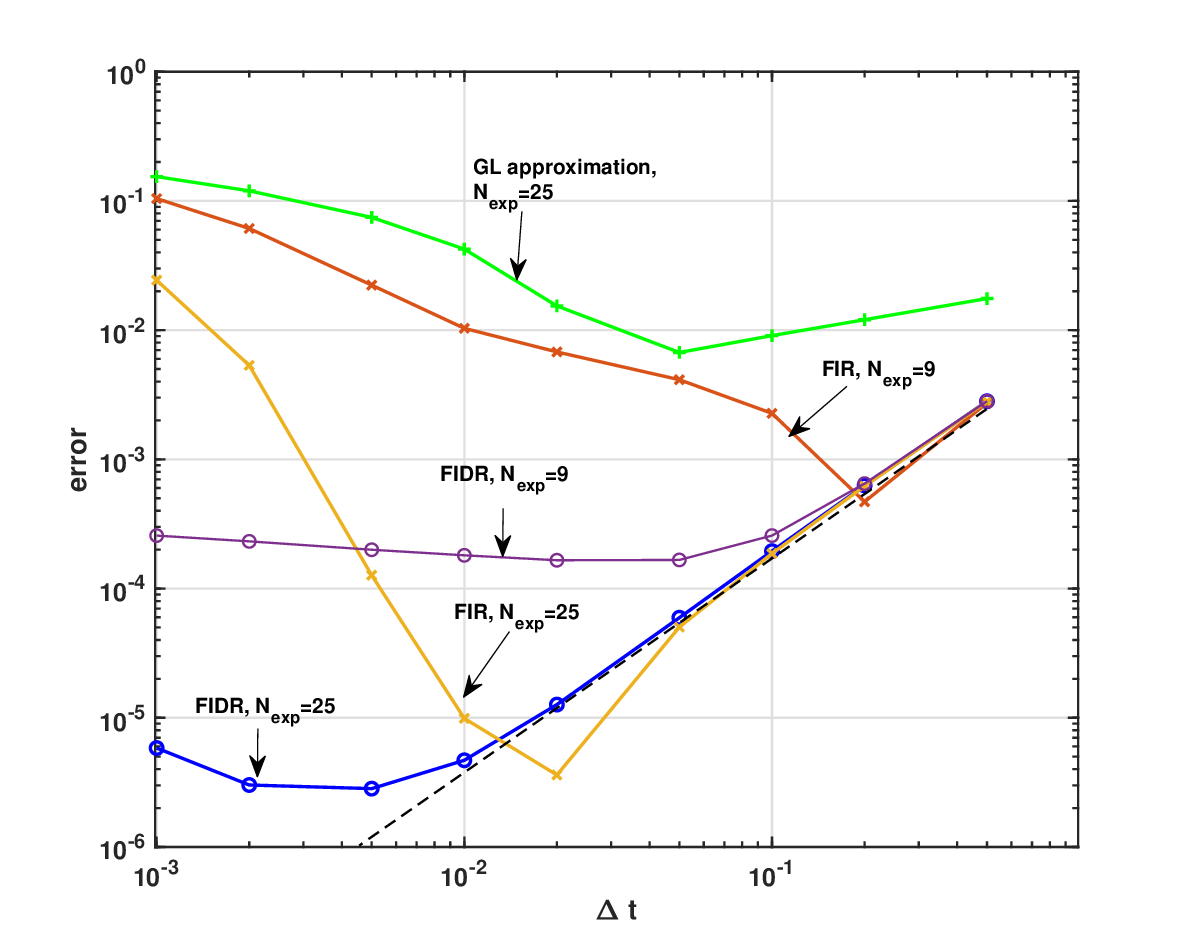}  
	\caption{The numerical experiment of problem \eqref{test1} when $\alpha=0.1$. The plot of related error (defined in \eqref{relatederror}) versus $\Delta t$, $h=10^{-3}$, $T=1$ is fixed. $N_{\text{exp}}$ is the number of nodes in sum-of-exponentials. We use GL approximation as a comparative scheme. We give the dotted line (black line) to show that FIDR converges in linear rate when $t$ is not too small (which is same in Figure \ref{Fig-lin05}, \ref{Fig-lin07} below).}  
	\label{Fig-lin01}   
\end{figure}

\begin{figure}[H]
	\centering  
	\includegraphics[width=0.7\textwidth]{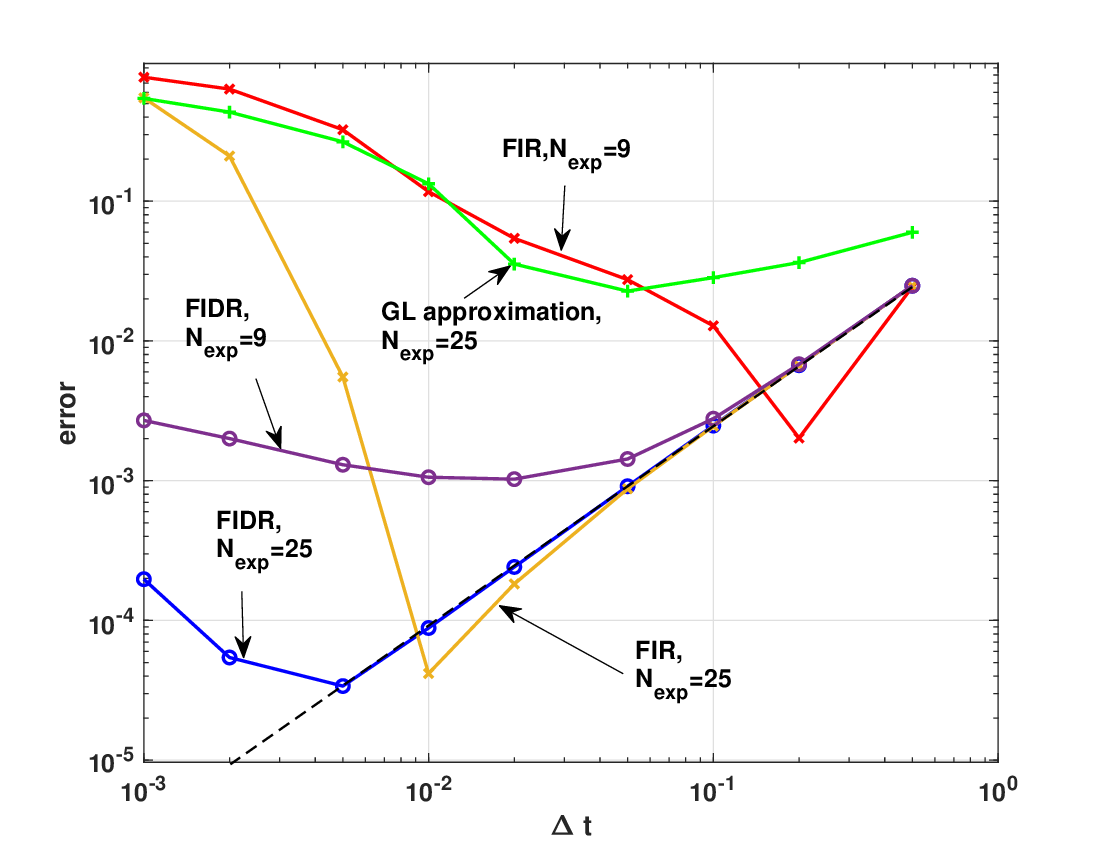}  
	\caption{The numerical experiment of problem \eqref{test1} when $\alpha=0.5$. The plot of related error (defined in \eqref{relatederror}) versus $\Delta t$, $h=10^{-3}$, $T=1$ is fixed. $N_{\text{exp}}$ is the number of nodes in sum-of-exponentials.}  
	\label{Fig-lin05}   
\end{figure}
\begin{figure}[H]
	\centering  
	\includegraphics[width=0.7\textwidth]{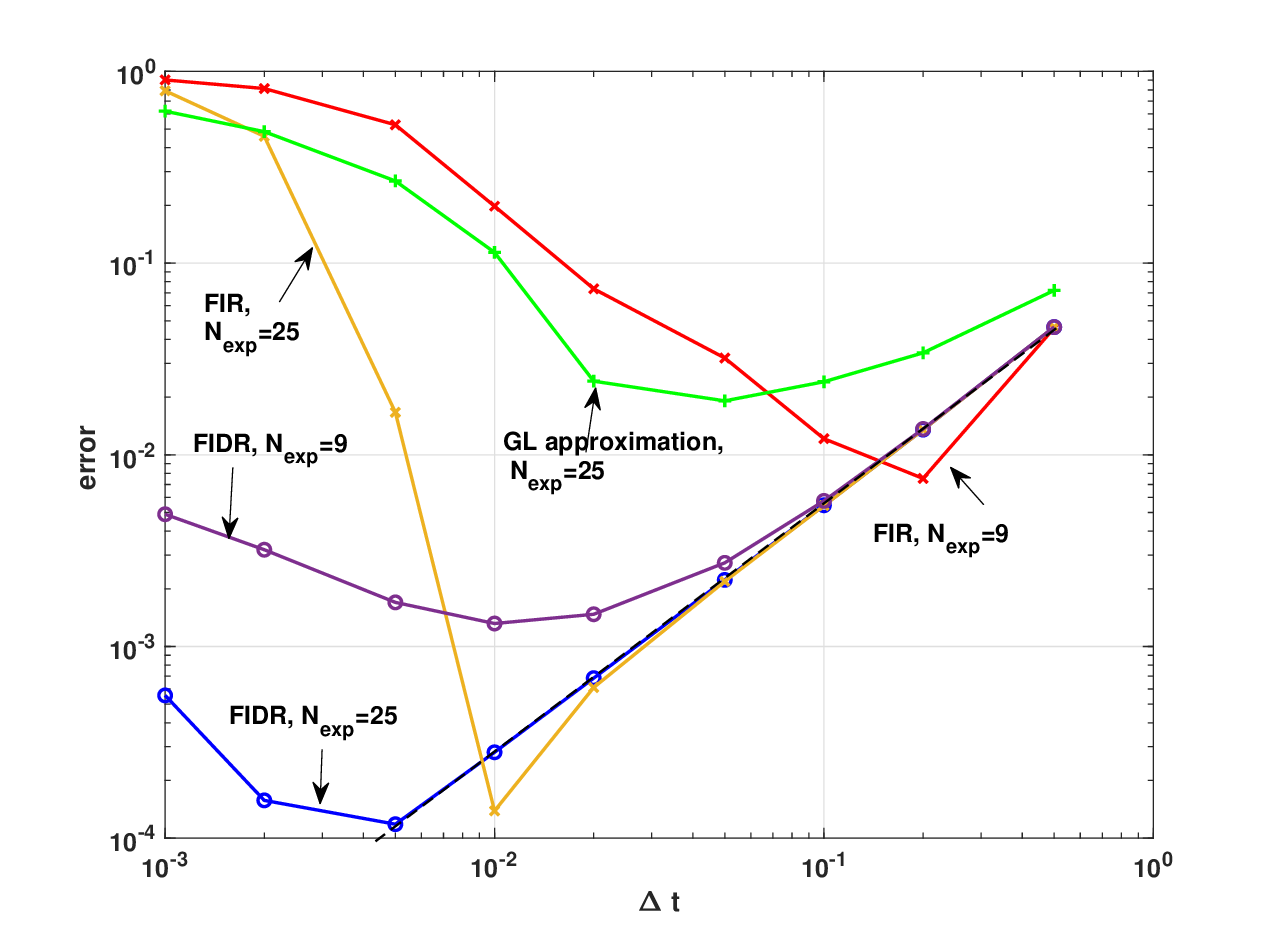}  
	\caption{The numerical experiment of problem \eqref{test1} when $\alpha=0.7$. The plot of related error (defined in \eqref{relatederror}) versus $\Delta t$, $h=10^{-3}$, $T=1$ is fixed. $N_{\text{exp}}$ is the number of nodes in sum-of-exponentials.}  
	\label{Fig-lin07}
\end{figure}
Figure \ref{Fig-lin01}, \ref{Fig-lin05}, \ref{Fig-lin07} show that the global error of FIDR is smaller than FIR and GL approximation when $N_{\text{exp}}=25$ (or $N_{\text{exp}}=9$) is same for the three schemes when  $\alpha=0.1,\ 0.5,\ 0.7$. Recall that $N_{\text{exp}}$ is the storage cost of the scheme. For the same scheme, the global error decreases when $N_{\text{exp}}$ grows. Furthermore, when $\Delta t$ isn't too small, {in these experiments which means $\Delta t> 10^{-2}$ when $N_{\text{exp}}=25$ and $\Delta t>10^{-1}$ when $N_{\text{exp}}=9$}, FIDR converges in linear rate (compare with the dotted line). {Here we set Gr\"unwald-Letnikov and central-difference approximations (also called GL approximation) as a comparison scheme. GL approximation is widely used in engineering \cite{CCMH}.} It is absolute stable but need more storage. We give the proof of the stability of GL approximation in the appendix.

{Remark that when $\alpha=0.7$, $Q\approx-0.73$ which is defined by \eqref{defQ} in viscoelastic models. This is a challenging case for practical simulation in engineering  with such a small $Q$, but FIDR can still compute it well with little storage cost.   }

The table below shows the related error and simulation time of FIDR and FIR.
\begin{table}[H]
	\centering
	\begin{tabular}{c|cc|cc}
		\hline
		$\Delta t$& time of FIDR & time of FIR & error of FIDR& error of FIR	\\
		\hline
		1e$-$01 &3.76e$-$01 &3.43e$-$01&1.94e$-$04&1.86e$-$04  \\
		\hline
		5e$-$02 &5.26e$-$01&5.73e$-$01 &5.94e$-$05&5.03e$-$05 \\
		\hline
		1e$-$02&2.32e$+$00 &2.39e$+$00&4.68e$-$06&9.89e$-$06  \\
		\hline
		5e$-$03 &4.60e$+$00 &4.59e$+$00&2.82e$-$06& 1.26e$-$04 \\
		\hline
		1e$-$03 &2.33e$+$01 &2.32e$+$01  &5.83e$-$06&2.43e$-$02 \\
		\hline
	\end{tabular}
	\caption{The related error and simulation time of FIDR and FIR when solving differential equation \eqref{test1}, \eqref{test2}. Where $\alpha=0.1,\ h=10^{-3},\ N_{\text{exp}}=25$, $T=1$. The related error is defined in \eqref{relatederror}}
	\label{Tab-Linear}
\end{table}
Table \ref{Tab-Linear} shows that FIR and FIDR cost similar simulation time when $N_{\text{exp}},\ h,\ \Delta t$ are the same. But FIDR achieves smaller global error. From Table 2 we can see that when $\alpha=0.1$ is small, both of FIR and FIDR can get accurate result with small storage cost. Furthermore, here $T=O(1)$, and the error of FIDR is smaller than the error of FIR, especially when $\Delta t<10^{-2}$.

We also calculate nonlinear diffusion PDE.
\begin{equation}\label{nonlinear}
	\begin{aligned}
		f(u)&=-u(1-u),\\
		u(x,0)&=\exp(-10(x-0.5)^2)+\exp(-10(x+0.5)^2).
	\end{aligned}
\end{equation} 
However, there isn't exact solution of this nonlinear PDE, so it is hard to compare the result of different schemes. We use this problem to test the self convergence of FIDR. the reference solution is computed over very small mesh sizes $h=\Delta t=10^{-4}$.

\begin{figure}[H]
	\centering  
	\includegraphics[width=0.7\textwidth]{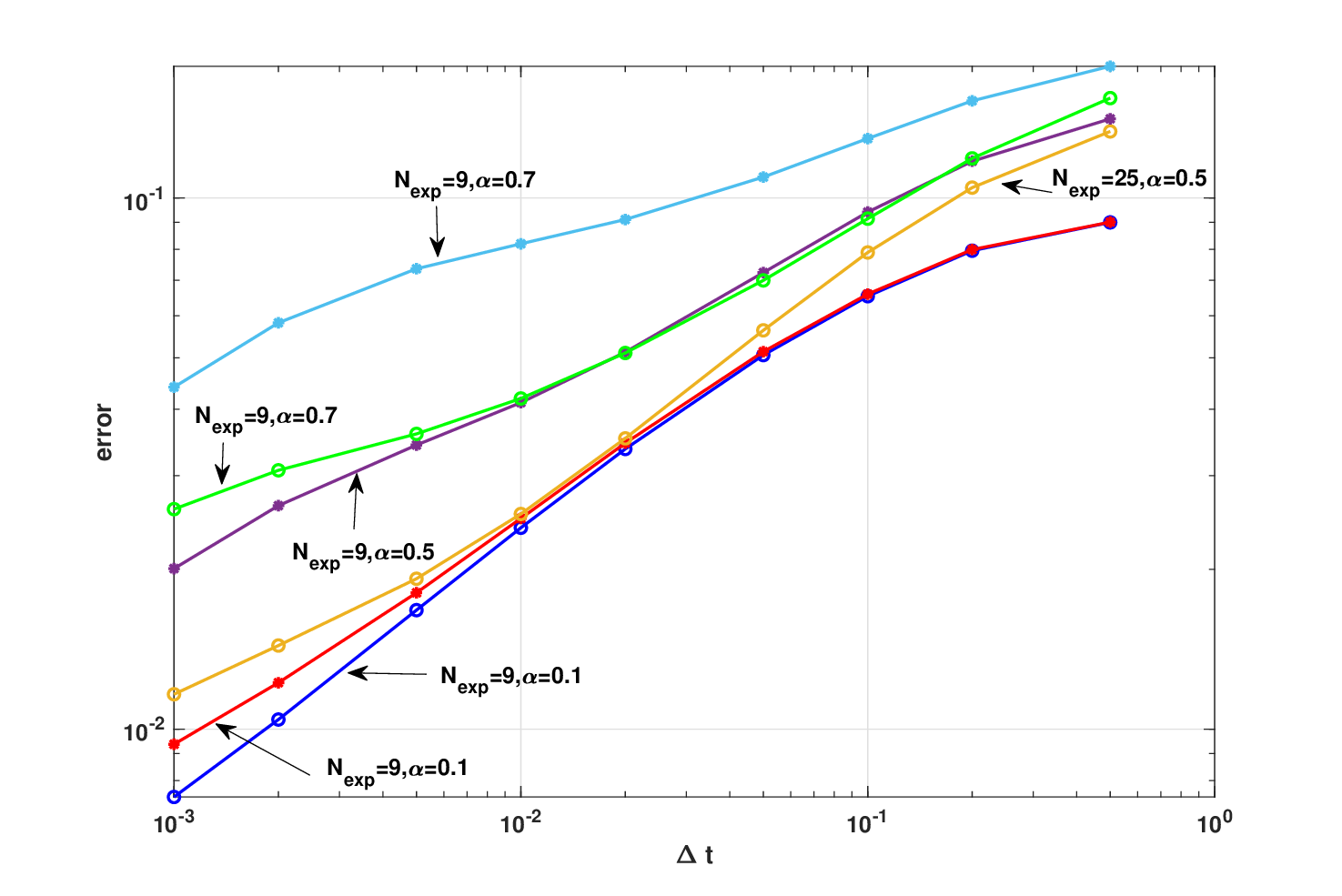}  
	\caption{The plot of related error (defined in \eqref{relatederror}) when solving \eqref{nonlinear} versus $\Delta t$. $h=10^{-3}$ is fixed. Different lines represent results with different $\alpha$ and $N_{\text{exp}}$.}  
	\label{Fig-nonlin}
\end{figure}
{Figure \ref{Fig-nonlin} shows FIDR still converges in linear rate in nonlinear differential equation. }

In the end of this section, we remark that in all the numerical experiments we have done, we only use few nodes in the sum-of-exponentials approximation, which make the global error of FIR blow up when $\Delta t$ is small. However, if the number of nodes $N_{\text{exp}}$ is large enough, the global error of FIR will stay small even when $\Delta t$ is small. To show this, we list the error of FIR with different $N_{\text{exp}}$ in the following table.
\begin{table}[H]
	\centering
	\begin{tabular}{c|cc|cc}
		\hline
		&FIDR ($N_{\text{exp}}=25$) & FIDR ($N_{\text{exp}}=40$) & FIR ($N_{\text{exp}}=25$) & FIR ($N_{\text{exp}}=40$)	\\
		\hline
		$\alpha=0.1$&5.83e$-$06&2.39e$-$06 &2.43e$-$02&2.04e$-$05 \\
		\hline
		$\alpha=0.5$ &1.97e$-$04&5.23e$-$06 &5.49e$-$01 &3.13e$-$04\\
		\hline
		$\alpha=0.7$& 5.56e$-$04 & 1.63e$-$05&7.93e$-$01&6.91e$-$04  \\
		\hline
	\end{tabular}
	\caption{ the related error for different $\alpha$ of FIDR and FIR, where $\Delta t=10^{-3}$ is fixed.}
	\label{Tab-comp}
\end{table}
{Table \ref{Tab-comp} shows that when $N_{\text{exp}}$ is large enough (here $N_{\text{exp}}\geq 40$), the blowing up of the error of FIR does not appear, which is the case  considered in \cite{JiangZhang}. However, the global error of FIR is still larger than the global error of FIDR when $N_{\text{exp}}$ is the same.}

\section{Conclusion}\label{Sec-conc}
In this paper, the relationship between the global error of FIR \cite{JiangZhang}, FIDR \cite{shen2018fast}  and $\alpha$ in the Caputo derivative is studied for the first time. We prove that the global error of FIR and FIDR is reduced when $\alpha$ gets smaller, thus we can reduce the modes considerably in sum-of-exponential approximation when $\alpha$ is small, then the storage and the computational cost will decrease. This result successfully fits the discovery in engineering work \cite{ZhanZhuangSun2017, ZhanZhuangLiu, ZZFL}.

Furthermore, we discover that the global error of FIDR is smaller than the global error of FIR  in all of the numerical experiments when  the number of modes $N_{\text{exp}},N_A$ are the same, {especially when $\alpha$ is small}. {The stability of FIDR is proved and we give a more accurate upper bound of the global error of FIDR compared with the analysis in \cite{shen2018fast}.} The numerical experiments about FIR and FIDR completely confirming the theoretical results carried out in this paper. 

{In future work, we will focus on the high order scheme based on  scheme FIDR.} Current FIDR is a first order scheme, since we approximate the derivative $u'(\tau)$ with first order method. We plan to approximate $u'(\tau)$ with higher-order scheme for Caputo fractional derivative. It is also meaningful to explore more advanced technique to efficiently calculate the Caputo derivatives with large $\alpha$, thus leading to a system with strong attenuation and dispersion. 

\section*{Acknowledgement}
Q. Zhan is supported by the NSFC grant No. 92066105  and CEMEE Grant 2021K0302A.\\ \\
Z. Zhou is supported by the National Key R\&D Program of China, Project Number 2020YFA0712000 and NSFC grant No. 11801016, No. 12031013. Z. Zhou is also partially supported by Beijing Academy of Artificial Intelligence (BAAI).

\newpage
\begin{appendices}
	\section{The Proof of Lemma \ref{app}}
	\pmb{Lemma \ref{app}} For any mesh functions $g=\{g^k|0\leq k\leq N \}$ defined on $\Omega_t$, the
	following inequality holds:
	\begin{equation}\label{appendix1}
		\Delta t\sum_{k=1}^n( \mathbb{D}_t^\alpha g^k)g^k\geq \frac{t_n^{-\alpha}-2\alpha\e t_{n-1}}{2\Gamma(1-\alpha)}\Delta t\sum_{k=1}^n(g^k)^2-\frac{t_n^{1-\alpha}-\alpha(1-\alpha)\e t_{n-1}\Delta t}{\Gamma(2-\alpha)}(g^0)^2.
	\end{equation}
	Recall the definition of $\mathbb{D}_t^\alpha$ is
	\begin{equation}\label{appendix2}
		\begin{aligned}
			\mathbb{D}_t^\alpha u^n=&\frac{\Delta t^{-\alpha}}{\Gamma(1-\alpha)}\left(\frac{u^n}{1-\alpha}-(\frac{\alpha}{1-\alpha}+a_0)u^{n-1}\right.\\
			&\quad\left.-\sum_{l=1}^{n-2}(a_{n-l-1}+b_{n-l-2})u^l-(b_{n-2}+\frac{1}{n^\alpha})u^0\right),
		\end{aligned}
	\end{equation}
	where
	\begin{equation*}
		\begin{aligned}
			&a_n=\alpha\Delta t^\alpha\sum_{j=1}^{N_{\text{exp}}}\omega_j e^{-ns_j\Delta t}\lambda_j^1, &b_n=\alpha \Delta t^\alpha\sum_{j=1}^{N_{\text{exp}}}\omega_j e^{-ns_j\Delta t}\lambda_j^2,\\
			&\lambda_j^1=\frac{e^{-s_j\Delta t}}{s_j^2\Delta t}(e^{-s_j\Delta t}-1+s_j\Delta t),
			&\lambda_j^2=\frac{e^{-s_j\Delta t}}{s_j^2\Delta t}(1-e^{-s_j\Delta t}-e^{-s_j\Delta t}s_j\Delta t).
		\end{aligned}
	\end{equation*}\\
	
	proof: Applying the definition \eqref{appendix2} of the fast evaluation scheme and the
	Cauchy-Schwarz inequality, we have
	\begin{equation}\label{appendix3}
		\begin{aligned}
			( \mathbb{D}_t^\alpha g^k)g^k=&\frac{\Delta t^{-\alpha}}{\Gamma(1-\alpha)}\Bigg(\frac{(g^k)^2}{1-\alpha}-\left(\frac{\alpha}{1-\alpha}+a_0\right)g^{k-1}g^k\\
			&\quad-\sum_{l=1}^{k-2}(a_{k-l-1}+b_{k-l-2})g^lg^k-\left(b_{k-2}+\frac{1}{k^\alpha}\right)g^0g^k\Bigg)\\
			\geq&\frac{1}{\Delta t^\alpha\Gamma(1-\alpha)}\left[\left(\frac{2-\alpha}{2(1-\alpha)}-\frac 12 \sum_{l=0}^{k-2}(a_l+b_l)-\frac{1}{2k^\alpha}\right)(g^k)^2\right.\\
			&\quad-\frac 12 \left(\frac{\alpha}{1-\alpha}+a_0\right)(g^{k-1})^2-\frac 12 \sum_{l=1}^{k-2}(a_{k-l-1}+b_{k-l-2})(g^l)^2\\
			&\quad\left. -\frac 12 \left(b_{k-2}+\frac{1}{k^{\alpha}}\right)(g^0)^2 \right].
		\end{aligned}
	\end{equation}
	Summing the above inequality from k=1 to n, we obtain
	\begin{equation}\label{appendix4}
		\begin{aligned}
			\Delta t\sum_{k=1}^n( \mathbb{D}_t^\alpha g^k)g^k\geq& \frac{\Delta t^{1-\alpha}}{\Gamma(1-\alpha)}\Bigg[\sum_{k=2}^n\left(\frac{2-\alpha}{2(1-\alpha)}-\frac 12 \sum_{l=0}^{k-2}(a_l+b_l)-\frac{1}{2k^\alpha}\right)(g^k)^2\\
			&\quad-\frac 12\sum_{k=2}^n (\frac{\alpha}{1-\alpha}+a_0)(g^{k-1})^2-\frac 12 \sum_{k=2}^n\sum_{l=1}^{k-2}(a_{k-l-1}+b_{k-l-2})(g^l)^2\\
			&\quad -\frac 12\sum_{k=2}^n \left(b_{k-2}+\frac{1}{k^{\alpha}}\right)(g^0)^2+\frac{1}{2(1-\alpha)}(g^1)^2-\frac{1}{2(1-\alpha)}(g^0)^2 \Bigg]\\
			=&\frac{\Delta t^{1-\alpha}}{\Gamma(1-\alpha)}\left(\sum_{k=1}^n\left(C_k(g^k)^2\right)-C_0(g^0)^2\right).
		\end{aligned}
	\end{equation}
	where the coefficients $C_k\ (k=0,1,\cdots,n)$  are given by the formula
	\begin{equation}\label{appendix5}
		\begin{aligned}
			C_k=\left\lbrace\begin{matrix}
				\frac{1}{2(1-\alpha)}+\frac 12\sum_{k=2}^n \left(b_{k-2}+\frac{1}{k^{\alpha}}\right), &k=0,\\
				\frac 12 -\frac 12 \sum_{l=0}^{n-2}(a_l+b_l)+\frac 12 b_{n-2}, &k=1,\\
				1- \frac 12\sum_{l=0}^{k-2}(a_l+b_l)-\frac{1}{2k^\alpha}-\frac 12\sum_{l=0}^{n-k-1}(a_l+b_l)+\frac 12 b_{n-k-1}, &2\leq k\leq n,\\
				\frac{2-\alpha}{2(1-\alpha)}-\frac 12 \sum_{l=0}^{k-2}(a_l+b_l)-\frac{1}{2k^\alpha}, &k=n
			\end{matrix}\right.
		\end{aligned}
	\end{equation}
	we have the estimate
	\begin{equation}\label{appendix6}
		\frac{1}{t^{1+\alpha}}-\e\leq \sum_{j=1}^{N_{\text{exp}}}\omega_je^{-s_j t}\leq \frac{1}{t^{1+\alpha}}+\e.
	\end{equation}
	It is also straightforward to verify that
	\begin{equation}\label{appendix7}
		\sum_{l=0}^{k-2}(a_l+b_l)=\alpha\Delta t^\alpha \int _{\Delta t}^{k\Delta t}\sum_{j=1}^{N_{\text{exp}}}\omega_j e^{-s_j t}dt.
	\end{equation}
	Combining \eqref{appendix6} and \eqref{appendix7}, we obtain
	\begin{equation}\label{appendix8}
		\left(1-\frac{1}{k^\alpha}\right)-\alpha\Delta t^\alpha t_{k-1}\e\leq\sum_{l=0}^{k-2}(a_l+b_l)\leq\left(1-\frac{1}{k^\alpha}\right)+\alpha\Delta t^\alpha t_{k-1}\e. 
	\end{equation}
	Substituting \eqref{appendix8} into \eqref{appendix5} yields the following estimates
	\begin{equation}\label{appendix9}
		\begin{aligned}
			&C_0\leq \frac{n^{1-\alpha}}{(1-\alpha)}+\alpha\Delta t^\alpha t_{n-1}\e,\\
			&C_1\geq \frac 12 -\frac 12 \sum_{l=0}^{n-2}(a_l+b_l)\geq \frac{1}{2n^\alpha}-\alpha \Delta t^\alpha t_{n-1}\e,\\
			&C_k\geq \frac{1}{2n^\alpha}-\alpha \Delta t^\alpha t_{n-1}\e, &2\leq k\leq n-1,\\
			&C_n\geq \frac{2-\alpha}{2(1-\alpha)}-\sum_{l=0}^{n-2}(a_l+b_l)-\frac{1}{2n^\alpha}\geq\frac{1}{2n^\alpha}-\alpha \Delta t^\alpha t_{n-1}\e.
		\end{aligned}
	\end{equation}
	Combining \eqref{appendix9} and \eqref{appendix4}, we obtain the Lemma.

	\section{Stability of GL approximation}
	Recall the GL approximation
	\begin{equation}\label{appendix3.1}
		\begin{aligned}
			^{GL}_0\mathbb{D}_t^p u(t)&= \Delta t^{-p}\sum_{m=0}^{t/\Delta t}(-1)^m\left(\begin{matrix}
				p\\ m
			\end{matrix}\right)u(t-m\Delta t)\\
			&=\Delta t^{-p}\sum_{m=0}^{n}(-1)^m\left(\begin{matrix}
				p\\ m
			\end{matrix}\right)U^{n-m}.
		\end{aligned}
	\end{equation}
	
	Define 
	\begin{equation}\label{fourier}
		U^{(k)m}=\lambda_k^m,
	\end{equation}
	which is the particular solution in fourier wave pattern. Then \eqref{appendix3.1} becomes 
	\begin{equation}
		\begin{aligned}
			\Delta t^{-p}\sum_{m=0}^{n}(-1)^m\left(\begin{matrix}
				p\\ m
			\end{matrix}\right)\lambda_k^{n-m}=0,
		\end{aligned}
	\end{equation}
	thus
	\begin{align*}
		\sum_{m=0}^{n}(-1)^m\left(\begin{matrix}
			p\\ m
		\end{matrix}\right)\lambda_k^{n-m}=0.\\
	\end{align*}
	Our purpose is to find whether there exists a solution $\lambda$ satisfies $|\lambda|>1$.
	If $p$ is a positive integer, when $n\geq p$ we have
	\begin{align*}
		(\lambda_k-1)^p&=0,\\
		\lambda_k&=1,
	\end{align*}
	if $0<p<1$ is not an integer. 
	Using the known property of the binomial coefficients
	\begin{equation}
		\left(\begin{matrix}
			p\\ m
		\end{matrix}\right)=\left(\begin{matrix}
			p-1\\ m
		\end{matrix}\right)+\left(\begin{matrix}
			p-1\\ m-1
		\end{matrix}\right),
	\end{equation}
	\begin{equation}\label{fourier2}
		\begin{aligned}
			0&=\sum_{m=0}^{n}(-1)^m\left(\begin{matrix}
				p\\ m
			\end{matrix}\right)\lambda_k^{n-m}\\
		&=\sum_{m=0}^{n}(-1)^m\left(\begin{matrix}
				p-1\\ m
			\end{matrix}\right)\lambda_k^{n-m}-\sum_{m=0}^{n-1}(-1)^m\left(\begin{matrix}
				p-1\\ m
			\end{matrix}\right)\lambda_k^{n-m-1}\\
			&=\left(\sum_{m=0}^{n-1}(-1)^m\left(\begin{matrix}
				p-1\\ m
			\end{matrix}\right)\lambda_k^{n-m-1} \right)(\lambda_k-1)+(-1)^n\left(\begin{matrix}
				p-1\\ n
			\end{matrix}\right).
		\end{aligned}
	\end{equation}
	Define
	\begin{equation}
		\begin{aligned}
			F_{p,n}(\lambda_k):=\left(\sum_{m=0}^{n-1}(-1)^m\left(\begin{matrix}
				p-1\\ m
			\end{matrix}\right)\lambda_k^{n-m-1} \right)(\lambda_k-1)+(-1)^n\left(\begin{matrix}
				p-1\\ n
			\end{matrix}\right).
		\end{aligned}
	\end{equation}
	\pmb{Absolute stability:} Consider the following ODE model
	\begin{equation}
		\begin{aligned}
			^C_0D_t^p u(t)=c u(t)+f(t).
		\end{aligned}
	\end{equation}
	The backward Euler method for this ODE is 
	\begin{equation}
		\begin{aligned}
			^{GL}_0\mathbb{D}_t^p U^n=cU^n.
		\end{aligned}
	\end{equation}
	where $Re(c)\leq0$.\\ 
	Applying \eqref{appendix3.1} and \eqref{fourier} to consider the absolute stability.
	\begin{equation}\label{euler}
		\begin{aligned}
			\Delta t^{-p}\sum_{m=0}^{n}(-1)^m\left(\begin{matrix}
				p\\ m
			\end{matrix}\right)\lambda_k^{n-m}-c \lambda_k^n=0,\\
			(1-\Delta t^p c) \lambda_k^n+\sum_{m=1}^{n}(-1)^m\left(\begin{matrix}
				p\\ m
			\end{matrix}\right)\lambda_k^{n-m}=0.
		\end{aligned}
	\end{equation}
	Since $p\in(0,1)$, we have $(-1)^m\left(\begin{matrix}
		p\\ m
	\end{matrix}\right)<0$, where $1\leq m\leq n$. Recall that
	\begin{equation*}
		F_{p,n}(\lambda_k)=\sum_{m=0}^{n}(-1)^m\left(\begin{matrix}
			p\\ m
		\end{matrix}\right)\lambda_k^{n-m}=\lambda_k^n+\sum_{m=1}^{n}(-1)^m\left(\begin{matrix}
			p\\ m
		\end{matrix}\right)\lambda_k^{n-m},
	\end{equation*}
	thus,
	\begin{equation}
		\begin{aligned}
			\sum_{m=1}^n\left| (-1)^m\left(\begin{matrix}
				p\\ m
			\end{matrix}\right)\right|&=-\sum_{m=1}^{n}(-1)^m\left(\begin{matrix}
				p\\ m
			\end{matrix}\right)\\
			&=1-F_{p,n}(1)\\
			&=1-(-1)^n\left(\begin{matrix}
				p-1\\ n
			\end{matrix}\right),
		\end{aligned}
	\end{equation}
	where we have used equation \eqref{fourier2}.
	
	Define $z=\Delta t^pc$, then $Re(z)\leq0$, $|1-z|\geq1$. If we assume $\lambda_0$ with $|\lambda_0|\geq1$ is a root to \eqref{euler}, then for $m<n$, we have $|\lambda_0|^m\leq|\lambda_0|^n$. Then we get
	\begin{equation}
		\begin{aligned}
			|1-z||\lambda_0|^n&\leq\sum_{m=1}^{n}\left|(-1)^m\left(\begin{matrix}
				p\\ m
			\end{matrix}\right)\right||\lambda_0|^{n-m}\\
			&\leq\left(\sum_{m=1}^{n}\left|(-1)^m\left(\begin{matrix}
				p\\ m
			\end{matrix}\right)\right|\right)|\lambda_0|^n\\
			&=\left(1-(-1)^n\left(\begin{matrix}
				p-1\\ n
			\end{matrix}\right)\right)|\lambda_0|^n\\
			&<|\lambda_0|^n.
		\end{aligned}
	\end{equation} 
	which is a contradiction. So every solution $\lambda_k$ of \eqref{euler} saftisfies $|\lambda_k|<1$, which mean that GL approximation is A-stable and zero stable.

\end{appendices}

\bibliographystyle{amsrefs}
\bibliography{caputo}

\end{document}